\DeclareRobustCommand{\SkipTocEntry}[5]{}
\newtheorem{thm}{Theorem}[section]
\newtheorem{prop}[thm]{Proposition}
\newtheorem{lem}[thm]{Lemma}
\newtheorem{cor}[thm]{Corollary}
\newtheorem*{fact}{Fact}
\theoremstyle{definition}
\newtheorem{defn}[thm]{Definition}
\newtheorem{example}[thm]{Example}
\newtheorem{rem}[thm]{Remark}
\numberwithin{equation}{section}
\newcommand{\DD}{\mathbb D}
\newcommand{\N}{\mathbb{N}}
\newcommand{\TT}{\mathbb T}
\newcommand{\CC}{\mathbb C}
\newcommand{\veps}{\varepsilon}
\begin{document}
	
	\title[Approximation Numbers]{Approximation Numbers of Differences of Composition Operators}
	
	\author{Fr\'{e}d\'{e}ric Bayart, Clifford Gilmore, Sibel \d{S}ahin}
	
	\date{\today}
	
	\thanks{F.~Bayart and C.~Gilmore were partially supported by the grant ANR-24-CE40-0892-01 of the French National Research
		Agency ANR. C.~Gilmore was also supported by the European Union’s Horizon Europe research and innovation programme under the Marie~Skłodowska-Curie grant agreement No.\ 101066064. Sibel \d{S}ahin was supported by The Scientific and Technological Research Council of T\"{u}rkiye (TUBITAK)-2219 International Postdoctoral Research Fellowship Program (Project no: 1059B192301690).}

	\address{Laboratoire de Math\'{e}matiques Blaise Pascal UMR 6620 CNRS\\ Universit\'{e} Clermont Auvergne, Campus universitaire des C\'{e}zeaux\\ 3 place Vasarely, 63178 Aubi\`{e}re Cedex, France.}
	\email{frederic.bayart@uca.fr}
	
	\address{Laboratoire de Math\'{e}matiques Blaise Pascal UMR 6620 CNRS\\ Universit\'{e} Clermont Auvergne, Campus universitaire des C\'{e}zeaux\\ 3 place Vasarely, 63178 Aubi\`{e}re Cedex, France.}
	\email{clifford.gilmore@uca.fr}

	\address{Department of Mathematics, Mimar Sinan Fine Arts University, Istanbul, T\"{u}rkiye}
	
	\email{sibel.sahin@msgsu.edu.tr}
	
	\subjclass{Primary 47B06; Secondary 47B33}
	
	\keywords{Approximation numbers, Composition operators}
	
	\begin{abstract}
	In this study we consider the approximation numbers of differences of composition operators acting on the Hardy-Hilbert space $H^2(\mathbb{D})$. We obtain both upper and lower bounds for these approximation numbers and by applying these general results to composition operators with specific types of symbols, we demonstrate the effect of boundary behaviour over the approximation numbers. Moreover, we use these one-dimensional methods and examples to understand the approximation numbers of differences of composition operators acting on the space $H^2(\mathbb{D}^2)$  of the bidisc. 
	\end{abstract}

	\maketitle

	\tableofcontents

	\section{Introduction}
	If $\varphi$ is an analytic self-map of the unit disc $\DD,$ the composition operator $C_\varphi$ is defined on the Hardy space $H^2(\DD)$ by $C_\varphi f=f\circ\varphi.$ An important problem in the study of composition operators is to characterise when the difference of two composition operators is compact. This is linked to the study of the topology of the set $\mathcal{C}(H^2)$ of (bounded) composition operators that act on the Hardy space $H^2(\DD)$. 
	For a pair $\varphi$ and $\psi$ of analytic self-maps of $\DD$, Shapiro and Sundberg~\cite{SS90} conjectured that $C_\varphi$ and $C_\psi$ belong to the same component of $\mathcal C(H^2)$ if and only if $C_\varphi-C_\psi$ is a compact operator on $H^2(\DD)$.
	This conjecture was independently disproved by Moorhouse and Toews~\cite{MT} and Bourdon~\cite{BOU}. Nevertheless, a significant literature has subsequently emerged that is devoted to understanding when the difference of two composition operators is compact (cf.\ for instance \cite{BWY, CKP,GK,Moo05,Sau}).
	
	In this article, we go a step further by considering the degree of compactness of differences  $C_\varphi-C_\psi$ via approximation numbers.
	
	\begin{defn}
		The \emph{approximation numbers} of an operator $T \colon H_1 \to H_2$ acting between two Hilbert spaces $H_1$ and $H_2$ are defined for $n \in \N$ as
		$$
		a_n(T) = \inf_{\mathrm{rank} R<n} \|T-R\|.
		$$
	\end{defn}   
	This definition thus gives the distance, in operator norm, of $T$ to the operators of rank less than $n$, and it is well known that $T$ is compact if and only if $\lim_{n \to \infty} a_n(T) = 0$.
	
	Approximation numbers of composition and weighted composition operators have been extensively studied for symbols from various function spaces (cf.~\cite{BLQRP,LeLQRP,LQRP12,LQRP13,LQRP14,LQRP15,LQRP17,LQRP19}). We will combine the techniques from the aforementioned articles with that of \cite{Sau} in order to compute estimates of the decay of $a_n(C_\varphi-C_\psi)$. 
	To give a flavour of our results, we mention two examples where we will obtain precise estimates.

	The first example considers two non-compact composition operators that give rise to a compact difference. More specifically, let $\varphi(z)=\frac{1+z}2$	and $\psi(z)=\varphi(z)+c(z-1)^\alpha$, for $\alpha>2$ and $c\in(0,1/128)$, so that $\psi$ is a self-map of $\DD$ (cf.~\cite[p.\ 337]{CMc}). 
	It was shown in \cite{MT} that $C_\varphi-C_\psi$ is compact if and only if $\alpha>2$. It is also not difficult to study when this operator is Hilbert-Schmidt. 
	Indeed, from \cite{HJM} we know that the Hilbert-Schmidt norm of $C_\varphi-C_\psi$ (for general holomorphic self-maps $\varphi,\psi \colon \DD\to\DD$) is equal to 
	$$\|C_\varphi-C_\psi\|_{HS}=\frac 1{2\pi}\int_{-\pi}^{\pi} 
	\frac{1-|\varphi(e^{i\theta})|^2|\psi(e^{i\theta})|^2}
	{(1-|\varphi(e^{i\theta})|^2)(1-|\psi(e^{i\theta})|^2)}
	\left|\frac{\varphi(e^{i\theta})-\psi(e^{i\theta})}{1-\overline{\varphi(e^{i\theta})}\psi(e^{i\theta})}\right|^2 d\theta.$$
	In our particular case, one can observe that
	$$\frac{1-|\varphi(e^{i\theta})|^2|\psi(e^{i\theta})|^2}
	{(1-|\varphi(e^{i\theta})|^2)(1-|\psi(e^{i\theta})|^2)}  \left|\frac{\varphi(e^{i\theta})-\psi(e^{i\theta})}{1-\overline{\varphi(e^{i\theta})}\psi(e^{i\theta})}\right|^2\sim_0 \theta^{2\alpha-6}$$
	so that $C_\varphi-C_\psi$ is Hilbert-Schmidt if and only if $6-2\alpha<1$, i.e.\ $\alpha>5/2.$
	
	We will be able to give rather precise decay estimates of $a_n(C_\varphi-C_\psi)$.
	\begin{example}\label{ex:differencesmooth}
		Let $\varphi(z)=\frac{1+z}2$ and $\psi(z)=\varphi(z)+c(z-1)^\alpha$
		for $\alpha>2$ and $c\in(0,1/128).$ 
		Then we get that
		$$\frac 1{(\log n)n^{\alpha-2}} \lesssim a_n(C_\varphi-C_\psi)\lesssim \left(\frac {\log(n)}{n}\right)^{\alpha-2}.$$
	\end{example}
	
	As standard, the notation $f(x)\lesssim g(x)$ for $f,g \colon X\to\mathbb R$ means that there exists $C>0$
	such that, for all $x\in X,$ $f(x)\leq C g(x).$
	Observe that the exponent $\alpha-2$ is in line with the condition that characterises when 
	$C_\varphi-C_\psi$ is Hilbert-Schmidt, since $\sum_{n\geq 1}(a_n(C_\varphi-C_\phi))^2$ is convergent if and only if $\alpha>5/2.$
	
	\medskip
	
	Our second example deals with the difference of two composition operators such that the range of their symbol has a corner. An important class of symbols $\varphi$ such that $\varphi(\DD)$ touches the boundary and has a corner are the lens maps, and the approximation numbers of composition operators induced by lens maps have been explored in detail in \cite{LLQRP,LQRP12,LQRP19,LeLQRP2}. For simplicity, we will work with the variant 
	$$\varphi(z)=\frac1{1+(1-z)^{1/2}}.$$
	It has been shown in \cite{QS} that there exist $a,b>0$ such that 
	$$\exp(-a\sqrt n)\lesssim a_n(C_\varphi)\lesssim \exp(-b\sqrt n).$$
	We now perturb $\varphi$ by considering the function $\psi(z)=\varphi(z)+c \chi(z)$, where 
	$$\chi(z)=\exp\left(-\frac 1{(1-z)^{1/2}}\right).$$
	We will show later that for $c>0$ sufficiently small, $\psi$ remains a self-map of $\DD.$ It turns out that the difference $C_\varphi-C_\psi$ has much smaller approximation numbers than either $C_\varphi$ or $C_\psi$: there exist $a,b>0$ such that 
	$$\exp\left(-a\frac{n}{\log n}\right)\lesssim a_n(C_\varphi-C_\psi)\lesssim \exp\left(-b\frac{n}{\log n}\right).$$
	
	\addtocontents{toc}{\SkipTocEntry}
	\subsection*{Organisation of the article}
	In Section 2, we introduce the basic tools like Carleson measures, interpolation sequences and pseudohyperbolic distance that we will use repeatedly throughout the article. In Section 3, we first obtain theoretical lower bounds for the approximation numbers of differences of composition operators and then we apply these results to a large class of examples. Section 4 is devoted to upper bounds of approximation numbers, starting again from a general result and then going into examples. Section 5 is devoted to the weighted composition operators and we improve some results of \cite{LeLQRP}. In the last section, we show how these one-dimensional techniques and examples can be used for calculating the approximation numbers of differences in the bidisc case for some specific classes of symbols.

	\section{Preliminaries}
	
	\subsection{Pseudohyperbolic and hyperbolic distance}
	The \emph{pseudohyperbolic distance} between two points $z$ and $w$ of $\DD$ will be denoted by $\rho(z,w)$ and it is defined as
	$$\rho(z,w)=\frac{|z-w|}{|1-\bar z w|}.$$
	We will denote by $d(z,w)$ the \emph{hyperbolic distance} between two points $z, w \in \DD$, which is normalised to give
	$$\rho(z,w)=\frac{1-e^{-d(z,w)}}{1+e^{-d(z,w)}}.$$
	In particular, the hyperbolic length of a curve $\Gamma$ contained in $\DD$ is given by
	$$\ell_P(\Gamma)=2\int_{\Gamma}\frac{|dz|}{1-|z|^2}.$$
	
	\subsection{Carleson measures and interpolating sequences}
	
	Our results will strongly depend on classical (and deep) results on Carleson measures and interpolating sequences. Let us first recall the relevant definitions.
	
	\begin{defn}
		A non-negative Borel measure $\mu$ on the closed unit disc $\overline \DD$ is called a \emph{Carleson measure} if there exists a constant $C>0$ such that, for all Carleson squares 
		$$Q(\theta_0,\delta) \coloneqq \{z=re^{i\theta}\in\overline{\DD} : \ r\geq 1-\delta,\ |\theta-\theta_0|\leq\delta\},$$
		the following inequality holds:
		$$\mu\big(Q(\theta_0,\delta)\big)\leq C\delta.$$
		The smallest such constant $C$ is called the \emph{Carleson norm} 
		of $\mu$ and will be denoted by $\|\mu\|_{\mathcal C}$. 
	\end{defn}
	
	Carleson measures are linked to embedding operators from both the holomorphic and harmonic Hardy spaces into $L^2(\overline{\DD},\mu)$. We denote by $Pf$ 
	the Poisson transform of a function $f\in L^1(\TT)$ and defined by 
	$$Pf(z)=\frac 1{2\pi}\int_0^{2\pi}\frac{1-|z|^2}{|1-\bar z e^{i\theta}|^2}f(e^{i\theta})d\theta$$
	for every $z\in\DD.$
	
	\begin{thm}\label{thm:embedding}
		Let $\mu$ be a non-negative Borel measure  on $\overline{\DD}$. The following assertions are equivalent.
		\begin{enumerate}[label=(\roman*)]
			\item $\mu$ is a Carleson measure.
			\item The map 
			\begin{align*}
				J_\mu^{\mathrm hol} \colon H^2(\DD)&\to L^2(\overline{\DD},\mu), \\
				f&\mapsto f
			\end{align*}
			is well defined and bounded. 
			\item The map
			\begin{align*}
				J_\mu^{\mathrm har} \colon L^2(\TT)&\to L^2(\overline{\DD},\mu), \\
				f&\mapsto Pf
			\end{align*}
			is well defined and bounded.
		\end{enumerate}
		Moreover, there exist absolute constants $a,b$ such that, if 
		the previous conditions hold, then 
		$$\|\mu\|_\mathcal C\leq a\|J_\mu^{\mathrm hol}\| \leq a\|J_\mu^{\mathrm har}\|\leq b\|\mu\|_{\mathcal C}.$$
	\end{thm}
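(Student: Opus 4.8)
The plan is to prove the chain of equivalences in Theorem~\ref{thm:embedding} by establishing the cyclic implications (i)$\Rightarrow$(iii)$\Rightarrow$(ii)$\Rightarrow$(i), with the quantitative norm comparisons extracted along the way. The implication (i)$\Rightarrow$(iii) is the classical Carleson embedding theorem in its harmonic form: assuming $\mu(Q(\theta_0,\delta))\leq \|\mu\|_{\mathcal C}\,\delta$ for every Carleson square, one shows that $\int_{\overline\DD}|Pf|^2\,d\mu\lesssim \|\mu\|_{\mathcal C}\,\|f\|_{L^2(\TT)}^2$. The standard route is via a maximal function estimate: bound $|Pf(z)|$ pointwise by (a constant times) the Hardy--Littlewood maximal function $Mf$ evaluated at the boundary point $e^{i\theta}$ nearest to $z$, then use the distributional/tent-space inequality $\mu(\{z : |Pf(z)|>\lambda\})\lesssim \|\mu\|_{\mathcal C}\,|\{e^{i\theta} : Mf(e^{i\theta})>c\lambda\}|$ (which follows from a Vitali covering of the superlevel set of $Mf$ by boundary arcs and the Carleson condition on the associated squares), and finally integrate in $\lambda$ together with the $L^2$-boundedness of $M$ on $L^2(\TT)$. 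This yields $\|J_\mu^{\mathrm har}\|\leq b'\|\mu\|_{\mathcal C}^{1/2}$; squaring matches the scaling in the stated inequality $\|J_\mu^{\mathrm har}\|\leq b\|\mu\|_{\mathcal C}$ once one is careful that the ``norm'' here is of the embedding, so I would keep track of whether the paper intends operator norms or their squares and state it consistently.

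Next, (iii)$\Rightarrow$(ii) is essentially immediate: for $f\in H^2(\DD)$, the boundary function $f^*\in L^2(\TT)$ satisfies $Pf^*=f$ on $\DD$ (harmonicity plus the Hardy space boundary-value theory), while $f$ is already defined $\mu$-a.e.\ on $\overline\DD$ through its radial limits, so $J_\mu^{\mathrm hol}f = J_\mu^{\mathrm har}f^*$ and $\|J_\mu^{\mathrm hol}\|\leq \|J_\mu^{\mathrm har}\|$ (using $\|f^*\|_{L^2(\TT)}=\|f\|_{H^2}$). The only subtlety is ensuring that the measure assigns no mass to a set where the radial limits fail to exist, but that set has Lebesgue measure zero on $\TT$ and lies in $\DD$ nowhere, so it is covered by the Carleson condition via arbitrarily thin squares and hence is $\mu$-null.

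The implication (ii)$\Rightarrow$(i) is the place to test functions against the embedding. For a fixed Carleson square $Q(\theta_0,\delta)$, plug in the normalised reproducing kernel $k_w(z) = \frac{(1-|w|^2)^{1/2}}{1-\bar w z}$ with $w = (1-\delta)e^{i\theta_0}$; then $\|k_w\|_{H^2}=1$, while $|k_w(z)|^2 \gtrsim 1/\delta$ uniformly for $z\in Q(\theta_0,\delta)$ because $|1-\bar w z|\lesssim \delta$ there. Hence $\|J_\mu^{\mathrm hol}\|^2 \geq \int|k_w|^2\,d\mu \gtrsim \mu(Q(\theta_0,\delta))/\delta$, giving $\mu(Q(\theta_0,\delta))\lesssim \|J_\mu^{\mathrm hol}\|^2\,\delta$ and therefore $\|\mu\|_{\mathcal C}\leq a\,\|J_\mu^{\mathrm hol}\|$ (or its square, per the bookkeeping above). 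I expect the main obstacle to be the maximal-function/covering argument in (i)$\Rightarrow$(iii) --- making the reduction from the Poisson integral to the boundary maximal function clean on the closed disc (including boundary mass of $\mu$, where ``$Pf$'' must be interpreted as the radial boundary value) and running the Vitali/Whitney covering so that the Carleson constant appears linearly. Everything else is a short test-function computation or a boundary-values identity; I would cite the classical sources (Carleson, Garnett's book) for the hardest estimate rather than reproducing it in full, and devote the written proof to the two easy implications and the quantitative extraction of the constants $a,b$.
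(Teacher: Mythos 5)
Your proposal is correct and follows essentially the same route as the paper: the paper cites \cite[Theorem 2.35]{CMc} for (i)$\iff$(ii) (whose proof is exactly your reproducing-kernel test for (ii)$\implies$(i)), notes that (iii)$\implies$(ii) is immediate via $P f^* = f$, and proves (i)$\implies$(iii) by writing $\mu=\mu_{\DD}+\mu_{\TT}$, invoking Garnett's theorem for the interior part and observing that the Carleson condition forces $\mu_{\TT}=F\,d\sigma$ with $\|F\|_\infty\leq\|\mu\|_{\mathcal C}$ --- the same absolute-continuity observation you use to dispose of the radial-limit null set, so your maximal-function treatment of the boundary mass is an equivalent repackaging. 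Your bookkeeping worry is well founded: as printed, the chain $\|\mu\|_{\mathcal C}\leq a\|J_\mu^{\mathrm hol}\|\leq a\|J_\mu^{\mathrm har}\|\leq b\|\mu\|_{\mathcal C}$ is not homogeneous under $\mu\mapsto\lambda\mu$ (the outer quantities scale like $\lambda$, the operator norms like $\lambda^{1/2}$), so the operator norms should appear squared; this squared normalisation is the one consistent with the way $\|\nu_Z\|_{\mathcal C}$ enters \eqref{eq:carlesonineq} and \eqref{EQ:ShapiroShields}.
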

	This result is a folklore theorem, although we did not find it written in this form in the litterature. That (iii)$\implies$(ii) is immediate, and that (i)$\iff$(ii)
	is \cite[Theorem 2.35]{CMc} (the equivalence of norms
	could be easily deduced from its proof and the proof of 
	\cite[Theorem 2.33]{CMc}). The proof of (i)$\implies$(iii),
	with $\|J_\mu^{\mathrm har}\|\leq c\|\mu\|_{\mathcal C},$
	is done in \cite[Theorem 5.6 of Chapter 1]{Gar} when $\mu$ is supported on $\DD$. 
	
	If $\mu$ is a Carleson 
	measure supported on $\overline{\DD}$, we may write it as $\mu=\mu_{\DD}+\mu_{\TT}$, where $\mu_{\DD}$
	is supported on $\DD$ and $\mu_{\TT}$ is supported on the unit circle $\TT$.
	Since, for all intervals $I(\theta_0,\delta)=\{z=e^{i\theta}:\ |\theta-\theta_0|<\delta\}$, it holds that
	$$\mu_{\TT}(I(\theta,\delta))\leq \mu(Q(\theta_0,\delta))\leq \|\mu\|_{\mathcal C}\delta,$$
	we get that $\mu_\TT$ is absolutely continuous with respect to the Lebesgue 
	measure $\sigma$ and we can write $\mu_{\TT}=Fd\sigma$ 
	with $\|F\|_\infty\leq \|\mu\|_{\mathcal C}.$
	The full proof of (i)$\implies$(iii) now follows easily.
	
	\begin{defn}
		A sequence $Z=(z_j)$ is a \emph{Carleson sequence} for $H^2(\mathbb{D})$ if the measure
		$$\nu_Z=\sum_j (1-|z_j|^2)\delta_{z_j}$$
		is a Carleson measure for $H^2(\DD)$. 
	\end{defn}
	
	The Carleson norm of $\nu_Z$ can be used to estimate the norm of linear combinations of reproducing kernels. Indeed, by \cite[Lemma 2.1]{QS}, if $Z$ is a Carleson sequence for $H^2(\DD)$,
	for any finite sequence of complex numbers $(b_j),$ 
	\begin{equation}\label{eq:carlesonineq}
		\left\|\sum_j b_j k_{z_j}\right\|^2\leq \|v_Z\|_{\mathcal C}\sum_j \dfrac{|b_j|^2}{1-|z_j|^2}
	\end{equation}
	where $k_w$ denotes the reproducing kernel at $w\in\DD$ defined by
	$$k_w(z)=\frac 1{1-\overline{w}z}.$$
	
	\begin{defn}
		A sequence $Z=(z_j)$ is an \emph{interpolating sequence} for $H^2(\mathbb{D})$ if the interpolation problem $f(z_j)=a_j$ has a solution $f\in H^2(\mathbb{D})$ whenever $\sum_{j}|a_j|^2(1-|z_j|^2)<\infty$. When this definition is satisfied, then there exists a constant $C$ with 
		\begin{equation}\label{eq:intconstant}
			\|f\|_{H^2}\leq C \left(\sum_{j}|a_j|^2(1-|z_j|^2)\right)^{1/2}
		\end{equation}
		and the smallest $C$ satisfying \eqref{eq:intconstant} is called the \emph{constant of interpolation} of $Z$, which we will denote by $M(Z)$.
	\end{defn}
	
	We will need concrete estimates for $M(Z)$. This has been done by Carleson and then by Shapiro and Shields who gave geometric characterizations of interpolating sequence. For a sequence $Z=(z_j),$ its \emph{uniform separation constant} is defined by 
	$$\delta(Z)=\inf_j \prod_{k\neq j}\rho(z_j,z_k).$$
	The Shapiro and Shields theorem gives that 
	\begin{equation}\label{EQ:ShapiroShields}
		M(Z)\leq \frac{\|\nu_Z\|_{\mathcal C}^{1/2}}{\delta(Z)}.
	\end{equation}
	
	While one can estimate the Carleson norm of $\nu_Z$ using its geometric definition, we can also use the following estimate, which can be found, for instance, in the proof of \cite[Theorem 1.1 of Chapter 7]{Gar}:
	\begin{equation}\label{eq:carlesonnorm}\|\nu_Z\|_{\mathcal C}\leq c\left(1+\log\left(\frac1{\delta(Z)}\right)\right)
	\end{equation}
	for some absolute constant $c.$
	
	\subsection{A self-map of $\DD$}    \label{sec:map}
	Let
	$$\varphi(z)=\frac1{1+(1-z)^{1/2}}$$
	and $\psi(z)=\varphi(z)+c \chi(z)$
	with 
	$$\chi(z)=\exp\left(-\frac 1{(1-z)^{1/2}}\right).$$
	We intend to prove that for $|c|$ small enough, $\psi$ is a self-map of $\DD$. It is well known that $\varphi(\DD)\subset\DD$ and that, for any $z\in\TT,$ $\varphi(z)\in\TT$ if and only if $z=1.$
	It is also easy to check that $\chi(\DD)\subset\frac 12\DD.$ 
	At the neighbourhood of $1,$
	\begin{align*}
		\psi(e^{it})&=\frac1{1+(-it+o(t))^{1/2}}+c\exp\left(-\frac1{(it+o(t))^{1/2}}\right)\\
		&=\frac1{1+e^{i3\pi/4}|t|^{1/2}+o(|t|^{1/2})}+o(|t|^{1/2})
	\end{align*}
	where the  constant involved in $o(|t|^{1/2})$ does not depend on $c$ provided $|c|\leq 1$. We deduce that $|\psi(e^{it})|^2=1-\sqrt 2|t|^{1/2}+o(|t|^{1/2}).$
	We therefore get that there exists $t_0>0$ such that, for all $|t|\leq t_0$ and all $|c|\leq 1,$ $|\psi(e^{it})|\leq 1.$ We may then adjust $|c|$ to be small enough so that this remains true for $t_0\leq|t|\leq\pi.$

	\section{Lower Bounds for the Approximation Numbers of Differences of Composition Operators}
	
	\subsection{A general result}
	In this subsection, we give a lower bound for the approximation number of the difference of two composition operators inspired by \cite[Theorem 3.1]{QS}.
	Let $\varphi$ and $\psi$ be holomorphic self-maps of the unit disc and let $Z$ be an interpolating sequence for $H^2(\mathbb{D})$. For the remainder of this section we set $W=\varphi(Z)\cup\psi(Z)$ and we assume that $\varphi(Z)\cap\psi(Z)=\varnothing.$
	
	\begin{lem}\label{lem:lowerinterpolation}
		If $W$ is an interpolating sequence for $H^2(\mathbb{D})$ 
		and if $(b_j)$ is a finite sequence of complex numbers, then
		$$
		\|\sum_{j}b_j(k_{\varphi(z_j)}-k_{\psi(z_j)})\|_{H^2}^{2}\geq M(W)^{-2}\sum_{j}|b_j|^2(\|k_{\varphi(z_j)}\|^2+\|k_{\psi(z_j)}\|^2).
		$$
	\end{lem}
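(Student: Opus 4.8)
The plan is to reformulate the interpolation hypothesis on $W$ as a lower bound for the adjoint of a restriction operator, in the spirit of the single-symbol estimate \cite[Theorem 3.1]{QS}. Write $W=(w_i)_i$. Since an interpolating sequence consists of pairwise distinct points, the hypothesis that $W$ is interpolating together with $\varphi(Z)\cap\psi(Z)=\varnothing$ forces all the points $\varphi(z_j)$ and $\psi(z_j)$ to be pairwise distinct; hence the finite sum $\sum_j b_j\bigl(k_{\varphi(z_j)}-k_{\psi(z_j)}\bigr)$ may be rewritten, unambiguously, as $\sum_i c_i k_{w_i}$, where $c_i=b_j$ if $w_i=\varphi(z_j)$ and $c_i=-b_j$ if $w_i=\psi(z_j)$.

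Next I would introduce the restriction operator
\[
T\colon H^2(\DD)\longrightarrow \ell^2,\qquad Tg=\Bigl(\tfrac{g(w_i)}{\|k_{w_i}\|}\Bigr)_i=\bigl(\langle g,\,k_{w_i}/\|k_{w_i}\|\rangle\bigr)_i,
\]
which is bounded because $W$, being interpolating, is a Carleson sequence (cf.\ Theorem~\ref{thm:embedding}). Unwinding the definition of $M(W)$ shows precisely that $T$ maps $H^2(\DD)$ onto $\ell^2$ and that, for each $c\in\ell^2$, the interpolation problem with data $a_j=c_j\|k_{w_j}\|$ yields $g\in H^2(\DD)$ with $Tg=c$ and $\|g\|_{H^2}\leq M(W)\|c\|_{\ell^2}$. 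A one-line Hilbert-space duality then gives that $T^\ast$ is bounded below by $M(W)^{-1}$: picking such a $g$ for a given $c$,
\[
\|c\|_{\ell^2}^2=\langle Tg,c\rangle=\langle g,T^\ast c\rangle\leq \|g\|_{H^2}\,\|T^\ast c\|_{H^2}\leq M(W)\,\|c\|_{\ell^2}\,\|T^\ast c\|_{H^2},
\]
so $\|T^\ast c\|_{H^2}^2\geq M(W)^{-2}\|c\|_{\ell^2}^2$. Finally, the reproducing property identifies $T^\ast c=\sum_i c_i\,k_{w_i}/\|k_{w_i}\|$.

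To conclude, I would apply this bound to the finitely supported (hence square-summable) sequence $c$ with $c_i=b_j\|k_{\varphi(z_j)}\|$ when $w_i=\varphi(z_j)$ and $c_i=-b_j\|k_{\psi(z_j)}\|$ when $w_i=\psi(z_j)$. By construction $T^\ast c=\sum_j b_j\bigl(k_{\varphi(z_j)}-k_{\psi(z_j)}\bigr)$ and $\|c\|_{\ell^2}^2=\sum_j|b_j|^2\bigl(\|k_{\varphi(z_j)}\|^2+\|k_{\psi(z_j)}\|^2\bigr)$, which is exactly the asserted inequality. There is no genuine obstacle here: the argument is a bookkeeping of normalisations once $M(W)$ has been reinterpreted as a lower Riesz-type bound, and the only place the hypotheses are really used (as opposed to soft functional analysis) is the disjointness of $\varphi(Z)$ and $\psi(Z)$ and the fact that $W$ is a single interpolating sequence, so that the two families of reproducing kernels assemble into one system indexed by $W$. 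The step most prone to slips is keeping the $\pm$ signs and the weights $\|k_{w_i}\|$ versus $(1-|w_i|^2)^{1/2}$ consistent throughout.
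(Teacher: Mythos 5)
Your argument is correct and is essentially the paper's proof in different clothing: identifying $\sum_j b_j(k_{\varphi(z_j)}-k_{\psi(z_j)})$ as $T^\ast c$ and bounding $T^\ast$ below via a norm-controlled right inverse of $T$ is exactly the duality step the paper carries out by hand, namely solving the interpolation problem on $W$ with data $\pm\overline{b_j}(1-|\cdot|^2)^{-1}$, invoking $\|g\|_{H^2}\leq M(W)\bigl(\sum_j|b_j|^2(\|k_{\varphi(z_j)}\|^2+\|k_{\psi(z_j)}\|^2)\bigr)^{1/2}$, and pairing $g/\|g\|_2$ against the kernel combination. Your explicit remark that the disjointness of $\varphi(Z)$ and $\psi(Z)$ is needed for the data assignment to be consistent is a point the paper leaves implicit.
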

	
	\begin{proof}
		We argue by duality. First note that
		\begin{align*}
					\|\sum_{j}b_j(k_{\varphi(z_j)}-k_{\psi(z_j)})\|_{H^2} &=\sup_{\|f\|_{2}=1}\sum_{j}\langle b_j(k_{\varphi(z_j)}-k_{\psi(z_j)}),f\rangle \\ &=\sup_{\|f\|_{2} = 1}\sum_{j} b_j(f(\varphi(z_j))-f(\psi(z_j))).
		\end{align*}
		Next take the solution $g\in H^2(\mathbb{D})$ of the interpolation problem 
		$$
		g(\varphi(z_j))=\overline{b_j}(1-|\varphi(z_j)|^2)^{-1} \quad \textnormal{ and } \quad g(\psi(z_j))=-\overline{b_j}(1-|\psi(z_j)|^2)^{-1}.
		$$
		For this solution $g$, by (\ref{eq:intconstant}) we have that
		$$
		\|g\|_2\leq M(W) \left( \sum_{j}|b_j|^2 \left((1-|\varphi(z_j)|^2)^{-1}+(1-|\psi(z_j)|^2)^{-1} \right) \right)^{1/2}.
		$$
		Then for $f=\dfrac{g}{\|g\|_2}$,
		\begin{align*}
			\|\sum_{j}b_j(k_{\varphi(z_j)}-k_{\psi(z_j)})\|_{H^2}&=\sup_{\|f\|_{2}=1}\sum_{j} b_j(f(\varphi(z_j))-f(\psi(z_j)))\\
			&\geq \dfrac{\sum_{j}|b_j|^2((1-|\varphi(z_j)|^2)^{-1}+(1-|\psi(z_j)|^2)^{-1})}{\|g\|_2}\\
			&\geq M(W)^{-1} \dfrac{\sum_{j}|b_j|^2((1-|\varphi(z_j)|^2)^{-1}+(1-|\psi(z_j)|^2)^{-1})}{\left(\sum_{j}|b_j|^2((1-|\varphi(z_j)|^2)^{-1}+(1-|\psi(z_j)|^2)^{-1})\right)^{1/2}}
		\end{align*}
		which gives us
		$$
		\|\sum_{j}b_j(k_{\varphi(z_j)}-k_{\psi(z_j)})\|_{H^2}^{2}\geq M(W)^{-2}\sum_{j}|b_j|^2(\|k_{\varphi(z_j)}\|^2+\|k_{\psi(z_j)}\|^2).
		$$
	\end{proof}

	We are now ready to state the main result of this subsection concerning the lower bounds for the approximation numbers of differences of composition operators.
	
	\begin{thm}\label{thm:lowerbound}
		Let $Z=\{z_1,\dots, z_n\}$ be a finite sequence of $n$ distinct points such that $\mathrm{card}(W) = \mathrm{card}(\varphi(Z)\cup\psi(Z))=2n$. Then
		$$
		a_n(C_\varphi-C_\psi)\geq M(W)^{-1}\|v_Z\|_{\mathcal{C}}^{-1/2}\left(\inf_{1\leq j\leq n}\left(\dfrac{1-|z_j|^2}{1-|\varphi(z_j)|^2}+\dfrac{1-|z_j|^2}{1-|\psi(z_j)|^2}\right)\right)^{1/2}.
		$$   
	\end{thm}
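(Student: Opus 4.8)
The plan is to bound $\|(C_\varphi-C_\psi)-R\|$ from below, uniformly over all operators $R$ with $\mathrm{rank}\,R<n$, and then pass to the infimum. The natural device is duality via reproducing kernels: recall that $C_\varphi^*k_w=k_{\varphi(w)}$ for every $w\in\DD$, hence $(C_\varphi-C_\psi)^*k_w=k_{\varphi(w)}-k_{\psi(w)}$, and that $\|C_\varphi-C_\psi-R\|=\|(C_\varphi-C_\psi)^*-R^*\|$ while $\mathrm{rank}\,R^*=\mathrm{rank}\,R<n$.

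First I would consider the subspace $E=\mathrm{span}\{k_{z_1},\dots,k_{z_n}\}$ of $H^2(\DD)$; it has dimension exactly $n$, since the $z_j$ are distinct and the corresponding reproducing kernels are therefore linearly independent. Because $\mathrm{rank}\,R^*<n$, the restriction $R^*|_E$ is not injective, so there is a nonzero vector $v=\sum_{j=1}^n b_jk_{z_j}$ with $R^*v=0$. For this $v$,
$$
\|C_\varphi-C_\psi-R\|=\|(C_\varphi-C_\psi)^*-R^*\|\geq\frac{\|(C_\varphi-C_\psi)^*v\|}{\|v\|}=\frac{\bigl\|\sum_{j}b_j\bigl(k_{\varphi(z_j)}-k_{\psi(z_j)}\bigr)\bigr\|}{\bigl\|\sum_{j}b_jk_{z_j}\bigr\|}.
$$

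It then remains to estimate this quotient. For the numerator I would apply Lemma~\ref{lem:lowerinterpolation} (its hypotheses hold: the assumption $\mathrm{card}(W)=2n$ forces the points of $W$ to be pairwise distinct, and if $W$ is not interpolating then $M(W)=\infty$ and the claimed bound is vacuous) together with the identity $\|k_w\|^2=(1-|w|^2)^{-1}$, which gives
$$
\Bigl\|\sum_{j}b_j\bigl(k_{\varphi(z_j)}-k_{\psi(z_j)}\bigr)\Bigr\|^2\geq M(W)^{-2}\sum_{j}|b_j|^2\left(\frac{1}{1-|\varphi(z_j)|^2}+\frac{1}{1-|\psi(z_j)|^2}\right).
$$
For the denominator, a finite sequence of distinct points is a Carleson sequence, so \eqref{eq:carlesonineq} yields $\bigl\|\sum_j b_jk_{z_j}\bigr\|^2\leq\|\nu_Z\|_{\mathcal C}\sum_j|b_j|^2(1-|z_j|^2)^{-1}$.

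Finally I would combine the two estimates. Writing $c_j=|b_j|^2/(1-|z_j|^2)\geq 0$, the ratio of the two right-hand sides equals
$$
\frac{M(W)^{-2}}{\|\nu_Z\|_{\mathcal C}}\cdot\frac{\sum_{j}c_j\left(\dfrac{1-|z_j|^2}{1-|\varphi(z_j)|^2}+\dfrac{1-|z_j|^2}{1-|\psi(z_j)|^2}\right)}{\sum_{j}c_j}\geq\frac{M(W)^{-2}}{\|\nu_Z\|_{\mathcal C}}\inf_{1\leq j\leq n}\left(\frac{1-|z_j|^2}{1-|\varphi(z_j)|^2}+\frac{1-|z_j|^2}{1-|\psi(z_j)|^2}\right),
$$
because a weighted average with nonnegative weights is at least the minimum of the averaged quantities. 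Taking square roots and then the infimum over all $R$ with $\mathrm{rank}\,R<n$ gives the theorem. No serious difficulty is expected once Lemma~\ref{lem:lowerinterpolation} is available; the one place needing a little care is the bookkeeping in the last display, namely pairing the weights in numerator and denominator so that the infimum over $j$ emerges, together with dispatching the degenerate cases (some $b_j=0$, or $M(W)=\infty$).
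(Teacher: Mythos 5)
Your proof is correct and follows essentially the same route as the paper's: restrict to $E(Z)=\mathrm{span}\{k_{z_1},\dots,k_{z_n}\}$, bound the numerator by Lemma~\ref{lem:lowerinterpolation} and the denominator by the Carleson inequality \eqref{eq:carlesonineq}; the only difference is that you reprove inline the elementary rank argument that the paper imports as \cite[Lemma 2.3]{QS}. A small point in your favour: by keeping the two terms together and using the weighted-average argument you obtain $\inf_j\bigl(\frac{1-|z_j|^2}{1-|\varphi(z_j)|^2}+\frac{1-|z_j|^2}{1-|\psi(z_j)|^2}\bigr)$ exactly as stated, whereas the paper splits this into $\mu_\varphi^2+\mu_\psi^2=\inf_j\frac{1-|z_j|^2}{1-|\varphi(z_j)|^2}+\inf_j\frac{1-|z_j|^2}{1-|\psi(z_j)|^2}$, which is a priori smaller.
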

	
	\begin{proof}
		Let $E(Z)=\mathrm{span}\{k_{z_1},\dots,k_{z_n}\}$. Then by \cite[Lemma 2.3]{QS}
		$$
		a_n(C_{\varphi}-C_{\psi})\geq \inf\limits_{\substack{f\in E(Z)\\ \|f\|_2=1}}\|(C^{*}_{\varphi}-C^{*}_{\psi})(f)\|.
		$$
		We take $f\in E(Z)$, $f=\sum_{j}b_jk_{z_j}$ with $\|f\|_2=1$. By Lemma \ref{lem:lowerinterpolation} we have that
		\begin{align*}
			\|(C^{*}_{\varphi}-C^{*}_{\psi})(f)\|_{2}^{2}&=	\|\sum_{j}b_j(k_{\varphi(z_j)}-k_{\psi(z_j)})\|_{2}^{2} \\
			&\geq M(W)^{-2}\sum_{j}|b_j|^2\left(\dfrac{1}{1-|\varphi(z_j)|^2}+\dfrac{1}{1-|\psi(z_j)|^2}\right)\\
		\end{align*}
		Next set
		$$
		\mu_{\varphi}^{2}=\inf_{1\leq j\leq n}\dfrac{1-|z_j|^2}{1-|\varphi(z_j)|^2} \quad \text{ and } \quad \mu_{\psi}^{2}=\inf_{1\leq j\leq n}\dfrac{1-|z_j|^2}{1-|\psi(z_j)|^2}.
		$$
		Then 
		$$
		\|(C^{*}_{\varphi}-C^{*}_{\psi})(f)\|_{2}^{2}\geq M(W)^{-2}	\mu_{\varphi}^{2}\sum_{j}\dfrac{|b_j|^2}{1-|z_j|^2}+M(W)^{-2}	\mu_{\psi}^{2}\sum_{j}\dfrac{|b_j|^2}{1-|z_j|^2}
		$$
		and by (\ref{eq:carlesonineq}) we have that
		\begin{align*}
			\|(C^{*}_{\varphi}-C^{*}_{\psi})(f)\|_{2}^{2}&\geq M(W)^{-2}	\mu_{\varphi}^{2}\|v_Z\|_{\mathcal{C}}^{-1}\|f\|_{2}^{2}+M(W)^{-2}	\mu_{\psi}^{2}\|v_Z\|_{\mathcal{C}}^{-1}\|f\|_{2}^{2}
			\\
			&=M(W)^{-2}\|v_Z\|_{\mathcal{C}}^{-1}(\mu_{\varphi}^{2}+\mu_{\psi}^{2}).
		\end{align*}
		Hence,
		$$
		a_n(C_\varphi-C_\psi)\geq M(W)^{-1}\|v_Z\|_{\mathcal{C}}^{-1/2}\left(\inf_{1\leq j\leq n}\left(\dfrac{1-|z_j|^2}{1-|\varphi(z_j)|^2}+\dfrac{1-|z_j|^2}{1-|\psi(z_j)|^2}\right)\right)^{1/2}.
		$$
	\end{proof}
	
	To obtain concrete applications of Theorem \ref{thm:lowerbound}, we apply inequalities \eqref{EQ:ShapiroShields} and \eqref{eq:carlesonnorm} to get the following corollary. Recall that $\delta(W)$ denotes the uniform separation constant of $W$.
	\begin{cor}\label{cor:lowerbound}
		Let $Z=\{z_1,\dots, z_n\}$ be a finite sequence of $n$ distinct points such that $\mathrm{card}(W)=\mathrm{card}(\varphi(Z)\cup\psi(Z))=2n$. Then
		$$
		a_n(C_\varphi-C_\psi)\gtrsim \frac{\delta(W)\left(\inf_{1\leq j\leq n}\left(\dfrac{1-|z_j|^2}{1-|\varphi(z_j)|^2}+\dfrac{1-|z_j|^2}{1-|\psi(z_j)|^2}\right)\right)^{1/2}}{\left(1+\log\left(\frac 1{\delta(W)}\right)\right)^{1/2}\left(1+\log\left(\frac 1{\delta(Z)}\right)\right)^{1/2}
		}.
		$$   
	\end{cor}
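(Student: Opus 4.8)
The goal is to derive Corollary~\ref{cor:lowerbound} from Theorem~\ref{thm:lowerbound} by replacing the two quantities $M(W)^{-1}$ and $\|v_Z\|_{\mathcal C}^{-1/2}$ with bounds expressed purely in terms of the separation constants $\delta(W)$ and $\delta(Z)$. So the plan is essentially a bookkeeping exercise chaining together the estimates \eqref{EQ:ShapiroShields} and \eqref{eq:carlesonnorm} that are already recorded in the preliminaries.

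First I would handle the factor $M(W)^{-1}$. Since $Z$ has $n$ distinct points and $\mathrm{card}(W)=2n$, the set $W=\varphi(Z)\cup\psi(Z)$ consists of $2n$ distinct points, so it makes sense to speak of its uniform separation constant $\delta(W)$ and its associated Carleson measure $\nu_W$. By the Shapiro--Shields estimate \eqref{EQ:ShapiroShields}, $M(W)\le \|\nu_W\|_{\mathcal C}^{1/2}/\delta(W)$, and then by \eqref{eq:carlesonnorm}, $\|\nu_W\|_{\mathcal C}\le c\bigl(1+\log(1/\delta(W))\bigr)$ for an absolute constant $c$. Combining these,
$$
M(W)^{-1}\ \geq\ \frac{\delta(W)}{\sqrt{c}\,\bigl(1+\log(1/\delta(W))\bigr)^{1/2}}\ \gtrsim\ \frac{\delta(W)}{\bigl(1+\log(1/\delta(W))\bigr)^{1/2}}.
$$
Next, for the factor $\|v_Z\|_{\mathcal C}^{-1/2}$, I apply \eqref{eq:carlesonnorm} directly to the Carleson sequence $Z$ (which is interpolating, hence in particular Carleson, so $\delta(Z)>0$ and the measure $v_Z=\nu_Z$ is genuinely Carleson): $\|v_Z\|_{\mathcal C}\le c\bigl(1+\log(1/\delta(Z))\bigr)$, whence
$$
\|v_Z\|_{\mathcal C}^{-1/2}\ \gtrsim\ \bigl(1+\log(1/\delta(Z))\bigr)^{-1/2}.
$$

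Finally I would substitute both lower bounds into the conclusion of Theorem~\ref{thm:lowerbound}. The $\inf_{1\le j\le n}$ term is left untouched, and multiplying the three pieces together yields exactly
$$
a_n(C_\varphi-C_\psi)\ \gtrsim\ \frac{\delta(W)\Bigl(\inf_{1\leq j\leq n}\bigl(\tfrac{1-|z_j|^2}{1-|\varphi(z_j)|^2}+\tfrac{1-|z_j|^2}{1-|\psi(z_j)|^2}\bigr)\Bigr)^{1/2}}{\bigl(1+\log(1/\delta(W))\bigr)^{1/2}\bigl(1+\log(1/\delta(Z))\bigr)^{1/2}},
$$
with the implied constant absorbing the absolute constants from \eqref{eq:carlesonnorm}. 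There is no real obstacle here; the only point requiring a word of care is verifying that $\delta(W)>0$ — i.e.\ that the hypothesis $\mathrm{card}(W)=2n$ together with the standing assumptions guarantees that $W$ is a finite sequence of distinct points (so that Shapiro--Shields applies and the logarithms are finite) — and noting that the implied constant is genuinely absolute, independent of $n$, $\varphi$, $\psi$ and $Z$.
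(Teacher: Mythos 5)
Your proposal is correct and follows exactly the route the paper intends: the corollary is obtained from Theorem~\ref{thm:lowerbound} by bounding $M(W)$ via the Shapiro--Shields inequality \eqref{EQ:ShapiroShields} together with \eqref{eq:carlesonnorm} applied to $\nu_W$, and bounding $\|\nu_Z\|_{\mathcal C}$ via \eqref{eq:carlesonnorm} applied to $Z$. The paper gives no further detail beyond this, so your write-up (including the remark that $\mathrm{card}(W)=2n$ ensures $\delta(W)>0$) matches and even slightly elaborates the intended argument.
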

	
	\subsection{Application to the difference of two non-compact composition operators}
	
	The main obstacle to applying the results of the previous subsection is to identify a suitable sequence $(z_j)$. Our first illustration shows that we may choose it for the difference of two non-compact composition operators, where one symbol is a small perturbation of the other. We first need the following definition.
	
	\begin{defn}
		We say that $\varphi \colon \overline{\DD}\to\DD$ belongs to $\mathcal C^2(\{1\})$
		if there exist $a,b\in\CC$ such that, for all $z\in \overline{\DD},$
		$$\varphi(z)=\varphi(1)+a(z-1)+b(z-1)^2+o((z-1)^2).$$
	\end{defn}
	The complex number $a$ coincides with the radial derivative $\varphi'(1)$ and belongs to $(0,1]$, and we set $\varphi''(1)=2b.$
	%We will need the following elementary lemma.
	
	%\begin{lem}
	%Let $\varphi:\overline{\DD}\to\DD$ belonging to $C^2(\{1\})$. Then
	%$$\Re e\varphi''(1)\geq (\varphi'(1))^2-\varphi'(1).$$
	%\end{lem}
	%\begin{proof}
	%We write 
	%$$\varphi(e^{it})=1+\varphi'(1)\left(it-\frac{t^2}2\right)-\frac{\Re e\varphi''(1)+i\Im m \varphi''(1)}2t^2+o(t^2)$$
	%so that 
	%$$|\varphi(e^{it})|^2=1+\left((\varphi'(1))^2-\varphi'(1)-\Re e\varphi''(1)\right)t^2+o(t^2).$$
	%The result follows from $|\varphi(e^{it})|\leq 1$ and by letting $t$ to $0.$
	%\end{proof}	
	
	\begin{thm}\label{thm:exlowerbound}
		Let $\varphi,\psi \colon \overline{\DD}\to\overline{\DD}$ be holomorphic in $\DD$, non constant,  and let $C_1>0$, $\alpha>2.$ Assume that $\varphi$ belongs to $\mathcal C^2(\{1\})$, that $\varphi(1)=1$ and that $\psi(z)=\varphi(z)+\chi(z)$ with $\chi(z)=o((z-1)^2)$ and 
		$$|\chi(z)|\geq C_1|z-1|^\alpha.$$
		Then the $n^\mathrm{th}$ approximation number of $C_\varphi-C_\psi$ satisfies 
		$$a_n(C_\varphi-C_\psi)\gtrsim \frac{1}{(\log n)n^{\alpha-2}}.$$
	\end{thm}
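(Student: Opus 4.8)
The plan is to apply Corollary~\ref{cor:lowerbound} (equivalently Theorem~\ref{thm:lowerbound}) to a finite interpolating sequence $Z=Z_n$ clustering at $1$, chosen so that the ``defect'' $\rho(\varphi(z_j),\psi(z_j))=|\chi(z_j)|\big/\bigl|1-\overline{\varphi(z_j)}\psi(z_j)\bigr|$ is as large as possible while $Z$ and $W=\varphi(Z)\cup\psi(Z)$ stay interpolating with controlled constants. The structural fact that makes this work is that, since $\varphi\in\mathcal C^2(\{1\})$ with $\varphi(1)=1$ and $\varphi'(1)=a\in(0,1]$, Julia's inequality together with the second order expansion of $\varphi$ at $1$ gives
\[
1-|\varphi(z)|^2\;\asymp\;|1-z|^2\;\asymp\;1-|z|^2
\]
uniformly on any region in which $z\to1$ \emph{tangentially}, meaning $1-|z|^2\asymp|1-z|^2$; since $\chi(z)=o((z-1)^2)$, the same holds with $\psi=\varphi+\chi$ in place of $\varphi$. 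Accordingly I would take $(z_j)_{j\ge j_0}$ on a curve internally tangent to $\partial\DD$ at $1$ with $|1-z_j|\asymp 1/j$ and $1-|z_j|^2\asymp 1/j^2$, with consecutive points at bounded hyperbolic distance --- concretely $z_j=1-j^{-2}-ij^{-1}$. A short computation of pseudohyperbolic distances along such a curve shows $(z_j)_{j\ge j_0}$ is uniformly separated, hence interpolating, with absolute $\delta\bigl((z_j)\bigr)>0$; and since $\varphi,\psi$ are bi-Lipschitz near $1$, the image sequences $(\varphi(z_j))_j$ and $(\psi(z_j))_j$ lie in tangential regions of the same type and are likewise uniformly separated. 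For each $n$ I set $Z=\{z_{j_0},\dots,z_{j_0+n-1}\}$, perturbing finitely many $z_j$ by negligible amounts if necessary so that $\varphi,\psi$ are injective on $Z$ and $\varphi(Z)\cap\psi(Z)=\varnothing$, whence $\mathrm{card}(W)=2n$.

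Next I would verify the inputs of Corollary~\ref{cor:lowerbound}. From $1-|\varphi(z_j)|^2\asymp 1-|z_j|^2\asymp 1-|\psi(z_j)|^2$ the infimum $\inf_j\bigl(\tfrac{1-|z_j|^2}{1-|\varphi(z_j)|^2}+\tfrac{1-|z_j|^2}{1-|\psi(z_j)|^2}\bigr)$ is bounded below by an absolute constant, and $\delta(Z)\ge\delta\bigl((z_j)_{j\ge j_0}\bigr)$ is too. The crucial estimate, using $|\chi(z_j)|\ge C_1|z_j-1|^\alpha$ and $\bigl|1-\overline{\varphi(z_j)}\psi(z_j)\bigr|\le\bigl(1-|\varphi(z_j)|^2\bigr)+|\chi(z_j)|\asymp 1-|z_j|^2\asymp j^{-2}$, is
\[
\rho(\varphi(z_j),\psi(z_j))\;\gtrsim\;\frac{|z_j-1|^\alpha}{j^{-2}}\;\asymp\;j^{-(\alpha-2)}\;\ge\;n^{-(\alpha-2)}\qquad(j_0\le j<j_0+n).
\]
From this I would deduce $\delta(W)\gtrsim n^{-(\alpha-2)}$: in the product $\prod_{w'\ne w}\rho(w,w')$ defining $\delta(W)$, take $w=\varphi(z_j)$ (the case $w=\psi(z_j)$ is symmetric); the only factor not bounded below by an absolute constant is the diagonal one $\rho(\varphi(z_j),\psi(z_j))\gtrsim n^{-(\alpha-2)}$, while $\prod_{k\ne j}\rho(\varphi(z_j),\varphi(z_k))\ge\delta\bigl((\varphi(z_j))_j\bigr)$ and $\prod_{k\ne j}\rho(\varphi(z_j),\psi(z_k))$ stays bounded below because $\rho(\varphi(z_k),\psi(z_k))\to0$ (since $|\chi(z_k)|=o(1-|\varphi(z_k)|^2)$), so $\psi(z_k)$ is a small pseudohyperbolic perturbation of $\varphi(z_k)$, and along the tangential curve distinct base points have $1-\rho(\varphi(z_j),\varphi(z_k))$ summably small in $|j-k|$, so the infinite product converges. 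Plugging these into Corollary~\ref{cor:lowerbound} --- where $\delta(W)\gtrsim n^{-(\alpha-2)}$ forces $1+\log(1/\delta(W))\lesssim\log n$ --- gives $a_n(C_\varphi-C_\psi)\gtrsim n^{-(\alpha-2)}/\log n$, which is the assertion.

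I expect the main work to be the bound $\delta(W)\gtrsim n^{-(\alpha-2)}$: one must show that the $n$ nearly coincident pairs $\{\varphi(z_j),\psi(z_j)\}$ do not depress $\delta(W)$ below the contribution of the single worst pair. This is exactly where the tangential placement of $Z$ is forced and cannot be avoided --- a radial (Stolz) sequence would make $\delta(W)$ exponentially small in $n$, while a genuinely two-dimensional net near $1$ would destroy the ratio infimum used above. A secondary technical point is the uniform verification, from the $\mathcal C^2(\{1\})$ hypothesis via Julia's lemma, of the two-sided bound $1-|\varphi(z_j)|^2\asymp|1-z_j|^2$ and its analogue for $\psi$.
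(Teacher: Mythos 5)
Your proposal is correct and follows essentially the same route as the paper: both apply Corollary~\ref{cor:lowerbound} to a sequence tending to $1$ tangentially (so that $1-|z_j|^2\asymp|1-z_j|^2\asymp 1-|\varphi(z_j)|^2\asymp 1-|\psi(z_j)|^2$), identify the diagonal factor $\rho(\varphi(z_j),\psi(z_j))\gtrsim |1-z_j|^{\alpha}/(1-|z_j|^2)\gtrsim n^{-(\alpha-2)}$ as the only small contribution to $\delta(W)$, and conclude. The only difference is in the concrete choice of points: the paper puts all $\lfloor n/2\rfloor$ points at the single scale $1-|z_j|\asymp n^{-2}$ on the circle $\{|z-\tfrac12|=\tfrac12\}$ and verifies separation by an explicit real/imaginary expansion, whereas you spread them over the scales $j^{-2}$ for $j_0\le j<j_0+n$; both choices keep $\delta(Z)$ and the off-diagonal products in $\delta(W)$ bounded below by absolute constants.
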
 
	\begin{proof}
		Let $n\geq 1$ and let us set 
		\begin{align*}
			z_j&=\frac{1+e^{i/(n-j)}}2,\ 1\leq j\leq n/2,\\
			w_j^{(1)}&=\varphi(z_j),\ 1\leq j\leq n/2,\\
			w_j^{(2)}&=\psi(z_j),\ 1\leq j\leq n/2,\\
			Z&=\{z_j:\ 1\leq j\leq n/2\},\\
			W&=\{w_j^{(1)}:\ 1\leq j\leq n/2\}\cup \{w_j^{(2)}:\ 1\leq j\leq n/2\}.
		\end{align*}
		
		We will soon see that $\textrm{card}(W)=2\lfloor n/2\rfloor$ provided that $n$ is large enough. The main step to apply Corollary \ref{cor:lowerbound} is to bound $\delta(W)$ from below. Almost all the difficulty in doing this is contained in the following fact.
		
		\smallskip
		
		\begin{fact}
			There exists $C>0$ (which does not depend on $n$) such that, for all $u\in\{1,2\}$ and for all $1\leq j\leq n/2$,
			$$\prod_{1\leq k\neq j\leq n/2}\rho(w_j^{(u)},w_k^{(1)})\rho(w_j^{(u)},w_k^{(2)})\geq C.$$
		\end{fact}
		
		\begin{proof}[Proof of the fact]
			We first observe for $k\in\{1,\dots,\lfloor n/2\rfloor \}$ that
			\begin{equation}\label{eq:lowerbound1}
				z_k=1+\frac i{n-k}-\frac1{4(n-k)^2}+o\left(\frac 1{(n-k)^2}\right).
			\end{equation}
			In particular, $\chi(z_k)=O(1/n^2)$ where the constant involved does not depend on $k$ since $k\leq n/2.$ Next let $v\in\{1,2\},$ $1\leq k\leq \lfloor n/2\rfloor $ and we write
			$$\varphi(z_k)=1+\varphi'(1)(z_k-1)+\frac{\varphi''(1)}{2}(z_k-1)^2+o((z_k-1)^2)$$
			with $\varphi'(1)\in(0,1].$ Therefore, taking into account \eqref{eq:lowerbound1}, we get 
			$$w_k^{(v)}=1+ia_{n,k}^{(v)}+b_{n,k}^{(v)}$$
			with $a_{n,k}^{(v)},b_{n,k}^{(v)}\in\mathbb R$ and 
			$$a_{n,k}^{(v)}=\frac{c_1}{n-k}+\frac{c_2}{(n-k)^2}+o\left(\frac 1{n^2}\right), \quad \ b_{n,k}^{(v)}=O\left(\frac 1{n^2}\right)$$
			for some $c_1\neq 0$ and $c_2\in\mathbb R.$ Then 
			$$|w_j^{(u)}-w_k^{(v)}|^2=(a_{n,j}^{(u)}-a_{n,k}^{(v)})^2+(b_{n,j}^{(u)}-b_{n,k}^{(v)})^2$$
			whereas 
			$$\overline{w_j^{(u)}}w_k^{(v)}=1+i(a_{n,k}^{(v)}-a_{n,j}^{(u)}+\delta_{n,j,k}^{(u,v)})+\gamma_{n,j,k}^{(u,v)}$$
			with $\delta_{n,j,k}^{(u,v)},\ \gamma_{n,j,k}^{(u,v)}\in\mathbb R$ and 
			$$\delta_{n,j,k}^{(u,v)}=O\left(\frac 1{n^3}\right), \quad \gamma_{n,j,k}^{(u,v)}=O\left(\frac 1{n^2}\right).$$
			This yields
			\begin{align*}
				\prod_{1\leq k\neq j\leq n/2}\rho^2(w_j^{(u)},w_k^{(v)})&=\prod_{1\leq k\neq j\leq n/2}\frac{(a_{n,j}^{(u)}-a_{n,k}^{(v)})^2+(b_{n,j}^{(u)}-b_{n,k}^{(v)})^2}{(a_{n,j}^{(u)}-a_{n,k}^{(v)})^2+2(a_{n,j}^{(u)}-a_{n,k}^{(v)})\delta_{n,j,k}^{(u,v)}+(\delta_{n,j,k}^{(u,v)})^2+(\gamma_{n,j,k}^{(u,v)})^2}\\
				&\geq \prod_{1\leq k\neq j\leq n/2}
				\frac1{1+2\frac{\delta_{n,j,k}^{(u,v)}}{a_{n,j}^{(u)}-a_{n,k}^{(v)}}+\frac{(\delta_{n,j,k}^{(u,v)})^2+(\gamma_{n,j,k}^{(u,v)})^2}{(a_{n,j}^{(u)}-a_{n,k}^{(v)})^2}}.
			\end{align*}
			For brevity we denote by $D_{n,j,k}^{(u,v)}$ the denominator of the preceding term. 
			
			Since 
			$$|a_{n,j}^{(u)}-a_{n,k}^{(v)}|\geq \frac{c_3|k-j|}{n^2}\geq \frac{c_3}{n^2},$$
			for $1\leq k\neq j\leq n/2$, we get that
			$$D_{n,j,k}^{(u,v)}\leq 1+\frac{c_4}n+\frac{c_4}{(k-j)^2}.$$
			The fact follows from the existence of an absolute constant $M>0$ such that
			$$\prod_{1\leq l\leq n}\left(1+\frac1{n}+\frac1{l^2}\right)\leq M.$$
		\end{proof}
		
		We are now ready to end the proof of Theorem \ref{thm:exlowerbound}. To estimate $\delta(W),$ we just need to consider $\rho(w_j^{(1)},w_j^{(2)}).$ The assumption on $\chi$ ensures that 
		$$|w_j^{(1)}-w_j^{(2)}|\geq \frac{c}{n^\alpha}$$
		for some $c>0$ independent of $n\geq 1$ and $1\leq j\leq n/2$, whereas 
		$$|1-\overline{w_j^{(1)}}w_j^{(2)}|\leq 1-|\varphi(z_j)|^2+|\varphi(z_j)|\cdot|\chi(z_j)|.$$
		The Taylor expansion of $\varphi$ reveals that 
		$$|\varphi(z_j)|^2=1+\frac{d_1}{(n-j)^2}+o\left(\frac 1{n^2}\right)$$
		so that
		$$0<1-|\varphi(z_j)|^2\leq \frac{d_2}{n^2}$$
		and
		$$|1-\overline{w_j^{(1)}}w_j^{(2)}|\leq\frac{d_3}{n^2}.$$
		This  yields that
		$$\rho(w_j^{(1)},w_j^{(2)})\gtrsim\frac1{n^{\alpha-2}}$$
		and finally $\delta(W)\gtrsim n^{-(\alpha-2)}.$ Applying the fact for $\varphi(z)=z,$ we also find $\delta(Z)\gtrsim 1.$ Since the computations above also yield
		$$\inf_{1\leq j\leq n}\left(\dfrac{1-|z_j|^2}{1-|\varphi(z_j)|^2}+\dfrac{1-|z_j|^2}{1-|\psi(z_j)|^2}\right)\gtrsim 1$$
		we have proven Theorem \ref{thm:exlowerbound}.
	\end{proof}
	\begin{example}
		Let $\varphi(z)=\frac{1+z}2$ and $\psi(z)=\varphi(z)+c(z-1)^\alpha$
		for $\alpha>2$ and $c\in(0,1/128).$ Then 
		$$a_n(C_\varphi-C_\psi)\gtrsim \frac 1{(\log n)n^{\alpha-2}}.$$
	\end{example}
	
	\subsection{Application to the difference of two maps with a corner}
	Let $$\varphi(z)=\frac1{1+(1-z)^{1/2}}$$
	and $\psi(z)=\varphi(z)+c \chi(z)$
	with 
	$$\chi(z)=\exp\left(-\frac 1{(1-z)^{1/2}}\right)$$
	for some sufficiently small $c\in(0,1].$ We intend to show that there exists $a>0$
	such that, for all $n\geq 2,$
	\begin{equation}\label{eq:lowerlen}
		a_n(C_\varphi-C_\psi)\gtrsim \exp\left(-a\frac n{\log n}\right).
	\end{equation}
	
	The proof will again follow from a suitable application of Corollary \ref{cor:lowerbound}. This time, we will choose a radial sequence $(z_j)$ which goes quickly to $1$.
	Let $\veps>0$, whose precise value will be given later, and we set
	\begin{align*}
		z_j&=1-e^{-j\veps},\ j\geq 1\\
		w_j^{(1)}&=\varphi(z_j),\ j\geq 1\\
		w_j^{(2)}&=\psi(z_j),\ j\geq 1.
	\end{align*}
	We will divide the proof into several lemmas.
	
	\begin{lem}\label{lem:lowerlen1}
		For all $\veps>0$, for all $n\geq 2$ and for all $j\in\{1,\dots,n\}$ we have that
		$$\frac{1-|w_{j+1}^{(1)}|}{1-|w_j^{(1)}|}\leq e^{-\veps/4}.$$
	\end{lem}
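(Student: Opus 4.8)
The plan is to exploit that $(z_j)$ is a \emph{radial} sequence, so that $w_j^{(1)}=\varphi(z_j)$ is a real number in $(1/2,1)$ and the whole statement collapses to an elementary one‑variable inequality. First I would record that $1-z_j=e^{-j\veps}\in(0,1)$, hence $(1-z_j)^{1/2}=e^{-j\veps/2}$ (the positive square root), so that
\[
w_j^{(1)}=\varphi(z_j)=\frac{1}{1+e^{-j\veps/2}}\in\Big(\tfrac12,1\Big),
\]
whence $|w_j^{(1)}|=w_j^{(1)}$ and
\[
1-|w_j^{(1)}|=\frac{e^{-j\veps/2}}{1+e^{-j\veps/2}}.
\]
Note that the hypothesis $j\le n$ plays no role whatsoever: the bound will be valid for every $j\ge1$ (indeed for every $j\ge0$).

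Next I would compute the quotient directly, factoring out the geometric part:
\[
\frac{1-|w_{j+1}^{(1)}|}{1-|w_j^{(1)}|}
= e^{-\veps/2}\cdot\frac{1+e^{-j\veps/2}}{1+e^{-(j+1)\veps/2}}.
\]
The remaining factor is $g\big(e^{-j\veps/2}\big)$, where $g(x)=\dfrac{1+x}{1+xe^{-\veps/2}}$, and a one‑line derivative computation shows $g$ is increasing on $[0,\infty)$ (the numerator of $g'(x)$ equals $1-e^{-\veps/2}>0$). Since $j\ge1$ forces $e^{-j\veps/2}\le e^{-\veps/2}\le1$, we obtain $g\big(e^{-j\veps/2}\big)\le g(1)=\dfrac{2}{1+e^{-\veps/2}}$, and therefore
\[
\frac{1-|w_{j+1}^{(1)}|}{1-|w_j^{(1)}|}\le\frac{2e^{-\veps/2}}{1+e^{-\veps/2}}.
\]

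Finally it only remains to verify that $\dfrac{2e^{-\veps/2}}{1+e^{-\veps/2}}\le e^{-\veps/4}$. Writing $u=e^{-\veps/4}\in(0,1)$ and clearing the denominator, this becomes $2u^2\le u(1+u^2)$, i.e.\ $2u\le 1+u^2$, i.e.\ $(1-u)^2\ge0$, which holds for every $\veps>0$. I do not expect any genuine obstacle in this lemma; the only point that is not completely automatic is the decision to bound the surviving ratio $\dfrac{1+e^{-j\veps/2}}{1+e^{-(j+1)\veps/2}}$ by its exact supremum $\dfrac{2}{1+e^{-\veps/2}}$ rather than by a coarser quantity — this is precisely what makes the final inequality reduce to a perfect square, whereas cruder estimates (for instance $2-e^{-\veps/2}$) turn out to be too weak when $\veps$ is close to $0$.
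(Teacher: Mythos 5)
Your proof is correct and follows essentially the same route as the paper: both compute the ratio explicitly as $e^{-\veps/2}\,\frac{1+e^{-j\veps/2}}{1+e^{-(j+1)\veps/2}}$, use monotonicity of $x\mapsto\frac{1+x}{1+xe^{-\veps/2}}$, and finish with an elementary inequality in $e^{-\veps/4}$. The only (immaterial) difference is that the paper evaluates the increasing factor at $x=e^{-\veps/2}$ and checks $x(1+x)/(1+x^2)\le\sqrt{x}$, whereas you use the slightly coarser value at $x=1$ and reduce to $(1-u)^2\ge 0$; both close the argument.
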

	\begin{proof}
		We start by writing
		\begin{align*}
			\frac{1-|w_{j+1}^{(1)}|}{1-|w_j^{(1)}|}=e^{-\veps/2}\frac{1+e^{-j\veps/2}}{1+e^{-(j+1)\veps/2}}
		\end{align*}
		and we set 
		$$f(y)=\frac{1+y}{1+ye^{-\veps/2}}$$
		with $y\in(0,e^{-\veps/2})$. Then 
		$$f'(y)=\frac{1-e^{-\veps/2}}{(1+ye^{-\veps/2})^2}>0$$
		so that $f$ is increasing. Therefore
		$$\frac{1-|w_{j+1}^{(1)}|}{1-|w_j^{(1)}|}\leq e^{-\veps/2}\frac{1+e^{-\veps/2}}{1+e^{-\veps}}\leq e^{-\veps/4}$$
		since $x (1+x)/(1+x^2)\leq \sqrt x$ for all $x\in(0,1).$
	\end{proof}
	
	We intend to generalise the previous lemma to $w_{j}^{(u)}$ for $u\in\{1,2\}.$
	For this we will use the following simple argument.
	
	\begin{lem}\label{lem:lowerlen2}
		There exists $\veps_0>0$ such that, for all $\veps\in(0,\veps_0),$ for all $n\geq 1$ and for all $j\geq j_0(\veps) \coloneqq \frac{|\log\veps|}{2\veps},$ 
		$$1-|w_j^{(2)}|\geq e^{-\veps/8}(1-|w_j^{(1)}|).$$
	\end{lem}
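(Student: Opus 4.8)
The plan is to exploit that the points $z_j=1-e^{-j\veps}$ are real, so that $w_j^{(1)}=\varphi(z_j)$ and $w_j^{(2)}=\psi(z_j)$ are real as well; since $\psi$ is a self-map of $\DD$ (Section~\ref{sec:map}) and $\chi(z_j)>0$, both lie in $(\tfrac12,1)$, whence $1-|w_j^{(u)}|=1-w_j^{(u)}$ (and the statement is visibly independent of $n$). From $1-z_j=e^{-j\veps}$ we get $(1-z_j)^{1/2}=e^{-j\veps/2}$, so that
$$1-|w_j^{(1)}|=\frac{e^{-j\veps/2}}{1+e^{-j\veps/2}}, \qquad \chi(z_j)=\exp\!\bigl(-e^{j\veps/2}\bigr), \qquad 1-|w_j^{(2)}|=\bigl(1-|w_j^{(1)}|\bigr)-c\,\chi(z_j).$$
Thus the assertion is equivalent to the scalar inequality $\bigl(1-e^{-\veps/8}\bigr)\bigl(1-|w_j^{(1)}|\bigr)\ge c\,\chi(z_j)$.

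First I would bound the left-hand side from below by elementary means: since $1+e^{-j\veps/2}\le 2$ we have $1-|w_j^{(1)}|\ge \tfrac12 e^{-j\veps/2}$, and for $\veps<\veps_0$ (with $\veps_0<1$ to be fixed at the end) one has $1-e^{-\veps/8}\ge \veps/16$, so the left-hand side is at least $\tfrac{\veps}{32}e^{-j\veps/2}$. It therefore suffices to show $\tfrac{\veps}{32}e^{-j\veps/2}\ge c\exp(-e^{j\veps/2})$, which, after taking logarithms and setting $x=j\veps/2$, becomes
$$e^{x}-x\ \ge\ \log\!\Bigl(\frac{32c}{\veps}\Bigr)=\log(32c)+|\log\veps|.$$

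The only real point is to verify this last inequality for every $j\ge j_0(\veps)$. The function $x\mapsto e^x-x$ is increasing on $(0,\infty)$, and the hypothesis $j\ge j_0(\veps)=\tfrac{|\log\veps|}{2\veps}$ forces $x=j\veps/2\ge \tfrac{|\log\veps|}{4}\eqqcolon x_0$; hence it is enough to check the inequality at $x_0$, where (using $\veps<1$, so $e^{x_0}=\veps^{-1/4}$) it reads
$$\veps^{-1/4}-\frac{|\log\veps|}{4}\ \ge\ \log(32c)+|\log\veps|.$$
Since $\veps^{-1/4}$ dominates any power of $|\log\veps|$ as $\veps\to 0^+$, this holds for all $\veps$ below some $\veps_0=\veps_0(c)>0$; shrinking $\veps_0$ further to also accommodate the two elementary bounds used above then finishes the proof. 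This lemma is not hard, but the place to be careful is that the argument works only because of the doubly-exponential smallness of $\chi(z_j)$: it is precisely the constraint $j\ge j_0(\veps)$, i.e.\ $z_j$ being already extremely close to $1$, that forces $\chi(z_j)\le e^{-\veps^{-1/4}}$ and thereby beats the merely polynomial-in-$\veps$ lower bound on $1-|w_j^{(1)}|$; the dependence of $\veps_0$ on the fixed constant $c$ is harmless.
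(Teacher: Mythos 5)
Your proof is correct and follows essentially the same route as the paper's: both reduce the claim to the scalar inequality $c\exp(-e^{j\veps/2})\le(1-e^{-\veps/8})\,\frac{e^{-j\veps/2}}{1+e^{-j\veps/2}}$, bound $1-e^{-\veps/8}\ge\veps/16$, and then settle $e^x-x\ge\log(32c/\veps)$ by monotonicity of $e^x-x$ evaluated at the threshold $x_0=|\log\veps|/4$ coming from $j\ge j_0(\veps)$. The only (harmless) cosmetic difference is that you use reality and positivity of $z_j$ to get the exact identity $1-|w_j^{(2)}|=(1-|w_j^{(1)}|)-c\chi(z_j)$, where the paper just invokes the triangle inequality.
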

	
	\begin{proof}
		We observe that 
		$$1-|w_j^{(2)}|\geq 1-|w_j^{(1)}|-\exp(-\exp(\veps j/2))$$
		so that we only need to prove that 
		$$\exp(-\exp(\veps j/2))\leq (1-e^{-\veps/8}) (1-|w_j^{(1)}|)= (1-e^{-\veps/8}) \frac{e^{-\veps j/2}}{1+e^{-\veps j/2}}.$$
		Provided $\veps>0$ is small enough, $1-e^{-\veps/8}\geq\veps/16$ and it suffices to show that 
		\begin{equation}\label{eq:exlowerbound1}
			\exp(-\exp(\veps j/2))\leq \frac{\veps}{32}\exp(-\veps j /2).    
		\end{equation}
		Let us set $a=\veps/32$ and $x=\veps j/2.$ Then \eqref{eq:exlowerbound1} is
		equivalent to 
		$$x-\exp(x)\leq \log(a).$$
		Now, the function $g \colon y\mapsto y-\exp(y)$ is decreasing on $(0,+\infty)$
		and provided $a$ is small enough,
		$$g(\log(2|\log(a)|))=\log(2|\log a|)-2|\log(a)|\leq\log(a).$$
		Therefore, as soon as $j\geq 2\log(2|\log(\veps/32)|)/\veps,$
		which is true provided $j\geq j_0(\veps)$ for $\veps$ small enough,
		the desired inequality is satisfied.
	\end{proof}
	
	This enables us to write a general version of Lemma \ref{lem:lowerlen1}.
	\begin{lem} \label{lem:lowerlen1bis}
		There exists $\veps_0>0$ such that, for all $\veps\in(0,\veps_0),$ for all $n\geq 1,$ for all $j\geq j_0(\veps)\coloneqq\frac{|\log\veps|}{2\veps}$ and for all $u,v\in\{1,2\}$
		$$\frac{1-|w_{j+1}^{(u)}|}{1-|w_j^{(v)}|}\leq e^{-\veps/8}.$$
	\end{lem}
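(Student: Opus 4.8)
The idea is that Lemma \ref{lem:lowerlen1} already gives the stronger bound $e^{-\veps/4}$ for the pair $(u,v)=(1,1)$, and the surplus factor $e^{-\veps/8}$ is exactly what is needed to absorb the error committed when one replaces the symbol $\varphi$ by $\psi=\varphi+c\chi$. So the plan is to control $1-|w_j^{(u)}|$ by $1-|w_j^{(1)}|$ up to a multiplicative factor close to $1$, and then invoke Lemma \ref{lem:lowerlen1}.

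First I would recall, as in the proofs of Lemmas \ref{lem:lowerlen1} and \ref{lem:lowerlen2}, that since $z_j=1-e^{-j\veps}$ we have $(1-z_j)^{1/2}=e^{-j\veps/2}$, hence
$$1-|w_j^{(1)}|=\frac{e^{-j\veps/2}}{1+e^{-j\veps/2}}\qquad\text{and}\qquad |\chi(z_j)|=\exp\!\bigl(-e^{j\veps/2}\bigr).$$
The quantitative input I need is a slight strengthening of inequality \eqref{eq:exlowerbound1}: its proof goes through verbatim with the constant $32$ replaced by any fixed constant, so for $\veps$ small enough there is $\delta_\veps>0$, comparable to $\veps$ and in particular with $\frac{1+\delta_\veps}{1-\delta_\veps}\leq e^{\veps/8}$, such that
$$|\chi(z_j)|\leq\delta_\veps\bigl(1-|w_j^{(1)}|\bigr)\qquad\text{for all }j\geq j_0(\veps).$$

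Next I would pass from $\varphi$ to $\psi$ pointwise. Since $w_j^{(2)}-w_j^{(1)}=c\chi(z_j)$ and $|c|\leq1$, the reverse triangle inequality together with the previous display gives, for $u\in\{1,2\}$ and $j\geq j_0(\veps)$,
$$(1-\delta_\veps)\bigl(1-|w_j^{(1)}|\bigr)\leq 1-|w_j^{(u)}|\leq(1+\delta_\veps)\bigl(1-|w_j^{(1)}|\bigr),$$
the case $u=1$ being trivial. Finally, fix $\veps\in(0,\veps_0)$, $n\geq1$, $j\geq j_0(\veps)$ and $u,v\in\{1,2\}$; then $j+1\geq j_0(\veps)$ as well, so applying the two-sided estimate at the indices $j+1$ and $j$ and then Lemma \ref{lem:lowerlen1} (whose conclusion in fact holds for every $j\geq1$, the restriction $j\leq n$ in its statement being needed only later) yields
$$\frac{1-|w_{j+1}^{(u)}|}{1-|w_j^{(v)}|}\leq\frac{1+\delta_\veps}{1-\delta_\veps}\cdot\frac{1-|w_{j+1}^{(1)}|}{1-|w_j^{(1)}|}\leq e^{\veps/8}\,e^{-\veps/4}=e^{-\veps/8},$$
which is the assertion.

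The only point that requires care is the bookkeeping of constants: a purely qualitative bound $|\chi(z_j)|=o\bigl(1-|w_j^{(1)}|\bigr)$ would not suffice, since one really needs $\delta_\veps$ comparable to $\veps$ so that the blow-up $\frac{1+\delta_\veps}{1-\delta_\veps}$ stays below $e^{\veps/8}$ and one must distribute this slack between numerator and denominator rather than using up all of it in a single one-sided estimate. This is not a genuine obstacle, however, because $\chi$ decays super-exponentially while $1-|w_j^{(1)}|$ decays only like $e^{-j\veps/2}$, so once $j\geq j_0(\veps)$ the ratio $|\chi(z_j)|/(1-|w_j^{(1)}|)$ is far smaller than $\veps$; this is precisely what the argument behind \eqref{eq:exlowerbound1} quantifies, and the very same threshold $j_0(\veps)$ works for any fixed multiplicative constant in that estimate.
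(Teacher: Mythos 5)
Your proof is correct and follows essentially the same route as the paper: reduce to the case $u=v=1$ of Lemma \ref{lem:lowerlen1} and absorb the passage from $\varphi$ to $\psi$ at a cost of $e^{\veps/8}$, using the super-exponential decay of $\chi$ along the radial sequence. The paper gets this slightly more cheaply by citing Lemma \ref{lem:lowerlen2} as stated together with the trivial monotonicity $1-|w_j^{(2)}|\leq 1-|w_j^{(1)}|$ (valid here since $c\chi(z_j)>0$), so the one-sided factor $e^{\veps/8}$ suffices and your sharpened two-sided estimate with $\tfrac{1+\delta_\veps}{1-\delta_\veps}\leq e^{\veps/8}$, while perfectly sound, is not needed.
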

	\begin{proof}
		The result is already known if $u=v=1$. The general case follows by using the inequalities
		$$1-|w_j^{(1)}|\geq 1-|w_j^{(2)}|\geq e^{-\veps/8}(1-|w_j^{(1)}|).$$
	\end{proof}
	
	Next we consider $n\geq j_0(\veps)$ and we set 
	\begin{align*}
		Z&=\{z_j:\ j_0(\veps)\leq j\leq n\},\\
		W&=\{w_j^{(1)}:\ j_0(\veps)\leq j\leq n\}\cup \{w_j^{(2)}:\ j_0(\veps)\leq j\leq n\}.
	\end{align*}
	
	\begin{lem}\label{lem:lowerlen3}
		There exists $C>0$ such that, for all $\veps>0$ and all $n\geq j_0(\veps)$ 
		$$ \delta(W)\geq \exp\left(\frac{-C}{1-e^{-\veps/8}}\right)\exp\left(-\exp\left(\frac{n\veps}2\right)\right).$$ 
	\end{lem}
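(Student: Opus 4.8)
\emph{Plan of proof.} The key is to exploit the very special geometry of $W$. Since $z_j=1-e^{-j\veps}$ is real and lies in $(0,1)$, every point of $W$ is a real number in $(1/2,1)$: indeed $w_j^{(1)}=\varphi(z_j)=(1+e^{-j\veps/2})^{-1}$ and $w_j^{(2)}=w_j^{(1)}+c\,\chi(z_j)$ with $\chi(z_j)=\exp(-e^{j\veps/2})>0$, so $w_j^{(1)}<w_j^{(2)}$. The two points of a single pair differ only by the super-exponentially small amount $c\exp(-e^{j\veps/2})$, whereas $1-w_j^{(1)}$ is of order $e^{-j\veps/2}$; thus points coming from distinct values of $j$ are geometrically separated --- this is exactly what Lemma~\ref{lem:lowerlen1bis} quantifies --- while the two points of a single pair are extremely close together. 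Consequently, in the product $\prod_{q\in W\setminus\{p\}}\rho(p,q)$ defining $\delta(W)$, the only genuinely small factor is the one coming from the pair-partner of $p$. After checking the easy fact (valid for $\veps$ small) that $w_j^{(2)}<w_{j+1}^{(1)}$, so that $\mathrm{card}(W)=2(n-\lceil j_0(\veps)\rceil+1)$ and all points are distinct, I would fix $p=w_{j_*}^{(u)}$ with $j_0(\veps)\le j_*\le n$ and $u\in\{1,2\}$, and use that $W\setminus\{p\}$ consists of the other point $w_{j_*}^{(3-u)}$ of its pair together with all $w_j^{(v)}$, $v\in\{1,2\}$, $j_0(\veps)\le j\le n$, $j\neq j_*$, to split
$$\prod_{q\in W\setminus\{p\}}\rho(p,q)=\rho\big(w_{j_*}^{(1)},w_{j_*}^{(2)}\big)\cdot\prod_{v\in\{1,2\}}\ \prod_{\substack{j_0(\veps)\le j\le n\\ j\neq j_*}}\rho\big(w_{j_*}^{(u)},w_j^{(v)}\big).$$

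To bound the second factor I would first record the elementary fact that for real $a,b\in(0,1)$, writing $s=\min(1-a,1-b)$ and $t=\max(1-a,1-b)$, one has $\rho(a,b)=\frac{t-s}{t+s-st}\ge\frac{t-s}{t+s}=\frac{1-s/t}{1+s/t}$, a decreasing function of $s/t$. Chaining Lemma~\ref{lem:lowerlen1bis} through the intermediate points $w_k^{(1)}$ shows that whenever $|j-j_*|=m\ge1$ one has $\min\big(\frac{1-|w_j^{(v)}|}{1-|w_{j_*}^{(u)}|},\frac{1-|w_{j_*}^{(u)}|}{1-|w_j^{(v)}|}\big)\le e^{-m\veps/8}$, whence $\rho(w_{j_*}^{(u)},w_j^{(v)})\ge\frac{1-r^m}{1+r^m}$ with $r=e^{-\veps/8}$. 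For each $m$ there are at most two indices $j$ with $|j-j_*|=m$, hence at most four such factors over the two choices of $v$, and every factor lies in $(0,1]$; therefore the second factor is at least $\big(\prod_{m\ge1}\frac{1-r^m}{1+r^m}\big)^4$. It then remains to bound $\prod_{m\ge1}\frac{1-r^m}{1+r^m}$ from below by $\exp(-C_1/(1-r))$ for an absolute constant $C_1$: from $-\log(1+r^m)\ge -r^m$, the identity $\log(1-r^m)=-\sum_{k\ge1}r^{mk}/k$, and the estimate $1-r^m\ge\frac{m(1-r)}{1+m(1-r)}$ (a consequence of $r^m\le e^{-m(1-r)}$), summing over $m$ gives $\sum_{m\ge1}\log\frac{1-r^m}{1+r^m}\ge -4/(1-r)$, so one may take $C_1=4$ and the second factor is at least $\exp(-16/(1-e^{-\veps/8}))$.

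For the pair term I would use $w_{j_*}^{(2)}-w_{j_*}^{(1)}=c\exp(-e^{j_*\veps/2})$ together with $1-w_{j_*}^{(1)}w_{j_*}^{(2)}\le 1-(w_{j_*}^{(1)})^2\le 2(1-w_{j_*}^{(1)})\le 2e^{-j_*\veps/2}$ to obtain
$$\rho\big(w_{j_*}^{(1)},w_{j_*}^{(2)}\big)\ge\frac c2\,e^{j_*\veps/2}\exp\big(-e^{j_*\veps/2}\big)\ge\frac c2\exp\big(-e^{n\veps/2}\big),$$
the last step because $xe^{-x}\ge e^{-x}$ for $x=e^{j_*\veps/2}\ge1$ and $j_*\le n$. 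Since $c\le1$ and $1/(1-e^{-\veps/8})\ge1$, one has $c/2\ge\exp\big(-\log(2/c)/(1-e^{-\veps/8})\big)$, so multiplying the two lower bounds yields
$$\prod_{q\in W\setminus\{p\}}\rho(p,q)\ge\exp\left(\frac{-(16+\log(2/c))}{1-e^{-\veps/8}}\right)\exp\big(-e^{n\veps/2}\big),$$
and since this holds for every $p\in W$, taking the infimum proves the lemma with $C=16+\log(2/c)$.

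The part I expect to be the real obstacle is the bookkeeping of the middle step: one has to see clearly that from any point $p$ precisely one factor of the defining product is ``bad'', and then obtain a \emph{clean} lower bound for $\prod_{m\ge1}\frac{1-r^m}{1+r^m}$ --- the naive estimates produce a spurious extra factor $\log(1/(1-r))$, and one needs the slightly sharper control on $1-r^m$ above (or, equivalently, the classical asymptotics of the Dedekind $\eta$ function) to reach the clean $\exp(-C/(1-e^{-\veps/8}))$ of the statement. One minor point: Lemmas~\ref{lem:lowerlen2} and~\ref{lem:lowerlen1bis} are only available for $\veps\in(0,\veps_0)$, so the estimate is to be understood for such $\veps$; for $\veps\ge\veps_0$ the prefactor $\exp(-C/(1-e^{-\veps/8}))$ is bounded below by an absolute constant and a crude direct argument suffices.
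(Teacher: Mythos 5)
Your proposal is correct and follows exactly the same decomposition as the paper: isolate the single ``bad'' factor $\rho\big(w_{j}^{(1)},w_{j}^{(2)}\big)\gtrsim\exp\big(-e^{n\veps/2}\big)$ coming from the pair-partner, and bound the product over all other points of $W$ by $\exp\big(-C/(1-e^{-\veps/8})\big)$. The only difference is that where the paper obtains this second bound by citing \cite[Lemma~6.4]{LQRP12} together with Lemma~\ref{lem:lowerlen1bis}, you prove it from scratch for real points via $\rho(a,b)\geq(1-s/t)/(1+s/t)$ and the clean estimate $\prod_{m\geq1}\frac{1-r^m}{1+r^m}\geq\exp(-4/(1-r))$ --- a valid self-contained substitute --- and your remark that the statement implicitly inherits the restriction $\veps<\veps_0$ from Lemma~\ref{lem:lowerlen1bis} is a fair point that the paper glosses over.
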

	\begin{proof}
		By \cite[Lemma 6.4]{LQRP12} and Lemma \ref{lem:lowerlen1bis}, we already know that there exists $C>0$ such that, for all $n\geq j_0(\veps)$ and for all $j_0(\veps)\leq j\leq n,$
		$$\prod_{j_0(\veps)\leq k\neq j\leq n}\rho(w_j^{(u)},w_k^{(1)})\rho(w_j^{(u)},w_k^{(2)})\geq \exp\left(\frac{-C}{1-e^{-\veps/8}}\right)$$
		where $u \in \{1,2\}$. 
		We conclude the proof with the easy estimate 
		$$\rho(w_j^{(1)},w_j^{(2)})\geq |w_j^{(1)}-w_j^{(2)}|\geq \exp\left(-\exp\left(\frac {n\veps}2\right)\right).$$
	\end{proof}
	
	We need a last lemma to apply Corollary \ref{cor:lowerbound}.
	
	\begin{lem}\label{lem:lowerlen4}
		For all $\veps>0,$ for all $n\geq 1$, for all $1\leq j\leq n$ and for all $u\in\{1,2\}$
		$$\frac{1-|z_j|^2}{1-|w_j^{(u)}|^2}\geq \frac 12\exp\left(-\frac{n\veps}2\right).$$
	\end{lem}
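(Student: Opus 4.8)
The plan is to exploit that every point in sight is real and lies in $(0,1)$, so the estimate reduces to a one-line elementary computation. Indeed, $z_j=1-e^{-j\veps}\in(0,1)$, and since $\varphi$ sends $(0,1)$ to $(0,1)$ with positive values and $\chi(z_j)=\exp(-e^{j\veps/2})>0$, both $w_j^{(1)}=\varphi(z_j)$ and $w_j^{(2)}=\varphi(z_j)+c\chi(z_j)$ are real; the second one lies in $(0,1)$ because $\psi$ is a self-map of $\DD$ for the chosen $c$ (see Section~\ref{sec:map}). For a real $w\in(0,1)$ one has $1-|w|^2=(1-w)(1+w)\leq 2(1-w)$, and this is the only structural fact required.

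First I would evaluate the quantities explicitly at $z_j=1-e^{-j\veps}$. Since $1-z_j=e^{-j\veps}$ and $(1-z_j)^{1/2}=e^{-j\veps/2}$, we get $\varphi(z_j)=(1+e^{-j\veps/2})^{-1}$, hence
$$1-w_j^{(1)}=\frac{e^{-j\veps/2}}{1+e^{-j\veps/2}}\leq e^{-j\veps/2},$$
while $1-|z_j|^2=(1-z_j)(1+z_j)\geq e^{-j\veps}$. Combining these with $1-|w_j^{(1)}|^2\leq 2(1-w_j^{(1)})$ gives, for $u=1$,
$$\frac{1-|z_j|^2}{1-|w_j^{(1)}|^2}\geq\frac{e^{-j\veps}}{2e^{-j\veps/2}}=\frac12 e^{-j\veps/2}\geq\frac12 e^{-n\veps/2},$$
where the last step uses $1\leq j\leq n$.

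For $u=2$ the same argument goes through unchanged: because $c\chi(z_j)>0$, one has $1-w_j^{(2)}=1-w_j^{(1)}-c\chi(z_j)<1-w_j^{(1)}\leq e^{-j\veps/2}$, so again $1-|w_j^{(2)}|^2\leq 2(1-w_j^{(2)})\leq 2e^{-j\veps/2}$ and the displayed chain of inequalities applies verbatim. There is no genuine obstacle here; the only point needing a little care is the bookkeeping that $w_j^{(2)}\in(0,1)$, which we inherit from Section~\ref{sec:map}, together with the trivial bound $1-|w|^2\leq 2(1-w)$ for $w\in(0,1)$.
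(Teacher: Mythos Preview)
Your proof is correct and follows essentially the same route as the paper's: both rely on $1-|z_j|=e^{-j\veps}$ and $1-|w_j^{(u)}|\leq 1-|w_j^{(1)}|\leq e^{-j\veps/2}$, then pass to $1-|\cdot|^2$ via the trivial factor $2$. You are simply more explicit than the paper about why all the points are real and lie in $(0,1)$, which the paper leaves implicit.
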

	\begin{proof}
		It suffices to observe that 
		$$1-|z_j|=e^{-j\veps}$$
		and that 
		$$1-|w_j^{(u)}|\leq 1-|w_j^{(1)}|=\frac{e^{-\veps j/2}}{1+e^{-\veps j}}\leq e^{-\veps j/2}.$$
	\end{proof}

	We are now ready to prove \eqref{eq:lowerlen}.
	Observe that we also have 
	$$\delta(Z)\geq \exp\left(\frac{-C}{1-e^{-\veps/8}}\right)$$
	by another application of, for instance, \cite[Lemma 6.4]{LQRP12}.
	We choose $\veps=\log(n)/n$ so that 
	$n=|\log\veps|/\veps+o(|\log\veps|/\veps)$. We set $p=\lfloor n-j_0(\veps)\rfloor$ and observe that $p\geq n/3$ provided $n$ is large enough.
	
	Corollary \ref{cor:lowerbound} and the preceding lemmas give the existence of some constant $c_1>0$ such that 
	$$a_p(C_\varphi-C_\psi)\gtrsim \exp\left(\frac{-c_1}{1-e^{-\veps/8}}\right)
	\exp\left(-c_1\exp\left(\frac{n\veps}2\right)\right)\exp(-c_1n\veps).$$
	The Taylor expansion of $\exp(-\veps/8)$ yield the existence of $c_2>0$ such that
	$$a_p(C_\varphi-C_\psi)\gtrsim \exp\left(-\frac{c_2}\veps \right)
	\exp\left(-c_2\exp\left(\frac{n\veps}2\right)\right)\exp(-c_2n\veps)$$.
	We get the desired result by our choice of $\veps$. \hfill $\square$

	\section{Upper bounds for the Approximation Numbers of the Difference of Composition Operators}
	
	\subsection{A general upper bound}
	
	We turn now to the problem of obtaining upper bounds for the approximation numbers of differences of composition operators.  We will first give a general upper bound in terms of Blaschke products.
	
	Throughout this section the pseudohyperbolic disc centered at $z_0$ with radius $r\in(0,1)$ will be denoted by $\Delta(z_0,r)$, and for brevity we will sometimes denote the Hardy space $H^2(\DD)$ by $H^2$. We shall require the following lemma (cf.~\cite[Lemma 5.1]{Sau}).
	\begin{lem}\label{lem:saukkho}
		Let $r\in (0,1)$. There exists a constant $C=C(r)>0$ such that, for all $f\in H^1(\DD)$, for all $z_0\in \DD$, for all $z\in \Delta(z_0,r)$,
		$$|f(z)-f(z_0)|\leq C \rho(z,z_0)P(|f|)(z_0).$$
	\end{lem}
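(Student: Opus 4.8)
The plan is to reduce to the case $z_0 = 0$ by a disc automorphism and then exploit the subharmonicity/Poisson-integral structure of $H^1$ functions. Fix $r \in (0,1)$. Given $z_0 \in \DD$ and $z \in \Delta(z_0,r)$, let $\phi_{z_0}(w) = \frac{z_0 - w}{1 - \overline{z_0}w}$ be the involutive automorphism swapping $0$ and $z_0$, and set $g = f \circ \phi_{z_0}$. Then $g \in H^1(\DD)$, the point $w \coloneqq \phi_{z_0}(z)$ satisfies $|w| = \rho(z,z_0) \leq r$, and $f(z) - f(z_0) = g(w) - g(0)$, so it suffices to prove
$$|g(w) - g(0)| \leq C(r)\, |w|\, P(|g|)(0)$$
for all $g \in H^1(\DD)$ and all $|w| \leq r$. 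Note $P(|g|)(0) = \frac{1}{2\pi}\int_0^{2\pi} |g(e^{i\theta})|\,d\theta = \|g\|_{H^1}$, and we must then translate $\|g\|_{H^1}$ back through the change of variables into $P(|f|)(z_0)$; this last point is the one genuine subtlety and I address it below.

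For the model inequality, since $g \in H^1$ we may write $g(w) = \sum_{k \geq 0} \hat g(k) w^k$ with $g(0) = \hat g(0)$, so $g(w) - g(0) = \sum_{k \geq 1}\hat g(k) w^k$. A slicker route avoiding coefficient estimates: apply the Schwarz--Pick-type bound via the integral representation. For $|w| \leq r$ write
$$g(w) - g(0) = \frac{1}{2\pi}\int_0^{2\pi} g(e^{i\theta})\left(\frac{1}{1 - we^{-i\theta}} - 1\right)d\theta = \frac{1}{2\pi}\int_0^{2\pi} g(e^{i\theta})\,\frac{we^{-i\theta}}{1 - we^{-i\theta}}\,d\theta,$$
which is legitimate by dominated convergence from the dilates $g_s(e^{i\theta}) = g(se^{i\theta})$ as $s \uparrow 1$ since $g \in H^1$. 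Taking absolute values and using $|1 - we^{-i\theta}| \geq 1 - |w| \geq 1 - r$ gives
$$|g(w) - g(0)| \leq \frac{|w|}{1-r}\cdot\frac{1}{2\pi}\int_0^{2\pi} |g(e^{i\theta})|\,d\theta = \frac{|w|}{1-r}\,\|g\|_{H^1},$$
so the model inequality holds with constant $1/(1-r)$.

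It remains to handle the change of variables, i.e.\ to bound $\|g\|_{H^1} = \|f \circ \phi_{z_0}\|_{H^1}$ in terms of $P(|f|)(z_0)$. This is the main obstacle, and the resolution is the standard identity for composition with an automorphism: for $h \in H^1$,
$$\frac{1}{2\pi}\int_0^{2\pi} |h(\phi_{z_0}(e^{i\theta}))|\,d\theta = \frac{1}{2\pi}\int_0^{2\pi} |h(e^{i\theta})|\,\frac{1 - |z_0|^2}{|1 - \overline{z_0}e^{i\theta}|^2}\,d\theta = P(|h|)(z_0),$$
obtained by the substitution $e^{i\theta} \mapsto \phi_{z_0}(e^{i\theta})$ whose Jacobian on $\TT$ is exactly the Poisson kernel $\frac{1-|z_0|^2}{|1-\overline{z_0}e^{i\theta}|^2}$ (equivalently, $\|C_{\phi_{z_0}}\|$ acts isometrically after this weighting). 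Applying this with $h = f$ yields $\|g\|_{H^1} = P(|f|)(z_0)$ exactly, and combining with the model inequality gives
$$|f(z) - f(z_0)| = |g(w) - g(0)| \leq \frac{1}{1-r}\,|w|\,\|g\|_{H^1} = \frac{1}{1-r}\,\rho(z,z_0)\,P(|f|)(z_0),$$
so the lemma holds with $C(r) = 1/(1-r)$. One should be slightly careful that $f \circ \phi_{z_0}$ genuinely lies in $H^1$ with the stated norm when $f$ is only in $H^1$ rather than $H^\infty$: this follows because $|f|$ has a harmonic majorant (namely $P(|f^*|)$ where $f^*$ is the boundary function) and composition with $\phi_{z_0}$ preserves harmonic majorants, giving the boundary-integral identity above by Fatou's lemma in one direction and the change-of-variables formula in the other.
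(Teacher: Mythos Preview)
Your argument is correct and even yields the explicit constant $C(r)=1/(1-r)$. The paper does not supply its own proof of this lemma; it merely cites \cite[Lemma 5.1]{Sau}, so there is no in-paper argument to compare against. For the record, your three ingredients are all standard and correctly applied: the Cauchy integral formula is valid for $g\in H^1(\DD)$ and gives the difference representation you wrote; the bound $|1-we^{-i\theta}|\geq 1-r$ for $|w|\leq r$ is immediate; and the key identity $\|f\circ\phi_{z_0}\|_{H^1}=P(|f|)(z_0)$ is exactly the change-of-variables formula on $\TT$ under the involutive automorphism $\phi_{z_0}$, whose arc-length Jacobian is $|\phi_{z_0}'(e^{i\theta})|=\dfrac{1-|z_0|^2}{|1-\overline{z_0}e^{i\theta}|^2}$, the Poisson kernel at $z_0$. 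Your remark at the end about harmonic majorants is the clean way to justify that $f\circ\phi_{z_0}\in H^1$ with that norm when $f$ is only in $H^1$: the least harmonic majorant $u=P(|f^*|)$ of $|f|$ composes to a harmonic majorant $u\circ\phi_{z_0}$ of $|f\circ\phi_{z_0}|$, and evaluating at $0$ gives $u(z_0)=P(|f|)(z_0)$.
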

	
	\begin{thm}\label{thm:difference}
		Let $r\in(0,1)$ and for $n\geq 1$ let $B$ be a Blaschke product of degree $n-1$. 
		Further assume that $\varphi$ and $\psi$ are holomorphic self-maps of $\DD$, and let the function $w \colon \DD\to(0,1)$ be given by $z\mapsto \rho(\varphi(z),\psi(z))$. Then
		\begin{align*}
			a_n(C_\varphi-C_\psi) &\lesssim \left( \sup_{\{z\in\TT \; :\ |\varphi(z)|<r\}} |B(\varphi(z))|+\sup_{\{z\in\TT \; :\ |\psi(z)|<r\}} |B(\psi(z))|\right. 			\\
			& \left. \qquad\quad + 
			\sup_{\{z\in\TT \; :\ |\varphi(z)|>r\}} |w(z)|+ \sup_{\{z\in\TT \; :\ |\psi(z)|>r\}} |w(z)| \right) \times\left(\|C_\varphi\|+\|C_\psi\|\right).
		\end{align*}
	\end{thm}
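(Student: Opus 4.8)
The plan is to build a rank-$(n-1)$ operator $R$ that approximates $C_\varphi-C_\psi$, by splitting the unit disc into a region where $\varphi$ and $\psi$ are close to the boundary (where the Blaschke product $B$ is small) and a region where they stay inside $r\DD$ (where $w(z)=\rho(\varphi(z),\psi(z))$ is small, i.e.\ the two symbols nearly agree). Concretely I would write, for $f\in H^2$,
\begin{align*}
(C_\varphi-C_\psi)f &= (f\circ\varphi-f\circ\psi),
\end{align*}
and use the natural candidate $R = M_{B\circ\varphi}\circ(\text{something})$-type construction. The cleanest route is the factorisation used in the theory of approximation numbers of composition operators: since $B$ has degree $n-1$, multiplication by $B$ followed by the canonical projection kills an $(n-1)$-dimensional space, so $C_\varphi - B(\varphi)\cdot C_\varphi$ differs from $C_\varphi$ by a rank-$(n-1)$ perturbation in an appropriate sense. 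More precisely I would set
$$
R f = (C_\varphi - C_\psi)f - \big(\,[\,f\circ\varphi - (Bf)\circ\varphi\,] - [\,f\circ\psi-(Bf)\circ\psi\,]\,\big)\cdot(\text{correction}),
$$
but the efficient formulation is to observe that $f\mapsto f\circ\varphi - B(\varphi)\,\cdot\,(f\circ\varphi)$ has the property that its restriction to the span of $\{k_{a_i}\}$ (the $a_i$ being the zeros of $B$) is what carries the rank, so that $a_n(C_\varphi) \le \sup_{|\varphi(z)|<r}|B(\varphi(z))| \cdot \|C_\varphi\| + (\text{tail})$. I would instead follow the Saukkho-type argument directly: decompose $C_\varphi - C_\psi = T_1 + T_2$ where $T_1$ handles the ``interior'' behaviour via $B$ and $T_2$ handles the ``boundary'' behaviour via $w$, each being either of small norm or of rank $<n$.

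The key steps, in order: (1) Fix the Blaschke product $B$ of degree $n-1$ and let $R_\varphi$ be the rank-$(n-1)$ operator such that $\|C_\varphi - R_\varphi\| \lesssim \sup_{\{z\in\TT:\,|\varphi(z)|<r\}}|B(\varphi(z))|\cdot\|C_\varphi\| + \sup_{\{z\in\TT:\,|\varphi(z)|>r\}}(\cdots)$ — this is the standard estimate, obtained by writing $f\circ\varphi = B(\varphi)g + (\text{rank }n-1)$ with $g\in H^2$, $\|g\|\lesssim\|f\|$, and then bounding $\|B(\varphi)g\|_{H^2}^2$ by splitting the boundary integral over $\{|\varphi|<r\}$ and $\{|\varphi|\ge r\}$; on the first set $|B(\varphi)|$ is small, on the second we only get $|B(\varphi)|\le 1$ but that piece will be absorbed into the $w$-terms once we take the difference. (2) Similarly produce $R_\psi$ of rank $n-1$. (3) For the difference: on the set $\{z\in\TT: |\varphi(z)|\ge r \text{ and }|\psi(z)|\ge r\}$ use Lemma~\ref{lem:saukkho} with $z_0=\varphi(z)$ (or $\psi(z)$) and the other point lying in $\Delta(\varphi(z),r')$ for suitable $r'$ — here the hypothesis forces $\rho(\varphi(z),\psi(z))=w(z)$ to control $|f(\varphi(z))-f(\psi(z))|$ pointwise by $C\,w(z)\,P(|f|)(\varphi(z))$, and integrating over $\TT$ against the Carleson-type bound gives a contribution $\lesssim \big(\sup |w|\big)\|C_\varphi\|$ (or $\|C_\psi\|$). (4) On the complementary set (where at least one of $|\varphi(z)|,|\psi(z)|$ is $<r$) use the Blaschke estimates from (1)–(2). (5) Add up: the approximating operator is $R = R_\varphi - R_\psi + (\text{rank-}0\text{ or small norm corrections})$, of rank $\le 2(n-1)$; a standard reindexing (replacing $n$ by $2n$, which only changes constants) yields $a_n(C_\varphi - C_\psi)\lesssim(\cdots)$ with the stated four suprema.

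The main obstacle I expect is step (3): making rigorous the pointwise-to-$L^2$ passage. Lemma~\ref{lem:saukkho} gives $|f(\varphi(z))-f(\psi(z))| \le C\rho(\varphi(z),\psi(z))\,P(|f|)(\varphi(z))$ only when $\psi(z)\in\Delta(\varphi(z),r)$, which need not hold for all boundary $z$; one must therefore treat separately the (few) boundary points where $\varphi(z),\psi(z)$ are pseudohyperbolically far apart and argue there that $|B(\varphi(z))|$ or $|B(\psi(z))|$ is small, or that such points contribute to the $\sup|w|$ term trivially since $w(z)$ is then close to $1$ and the whole estimate is $\lesssim \|C_\varphi\|+\|C_\psi\|$ anyway. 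Then one needs $\int_\TT P(|f|)(\varphi(e^{it}))^2\,dt \lesssim \|C_\varphi\|^2\|f\|^2$, which follows from the harmonic-Hardy embedding (Theorem~\ref{thm:embedding}(iii)) applied to the pullback measure $\mu_\varphi$, whose Carleson norm is comparable to $\|C_\varphi\|^2$ by the standard Carleson-measure characterisation of bounded composition operators. Keeping the four regions bookkept correctly, and checking that the ``bad'' $|B|\le 1$ pieces in steps (1)–(2) really are dominated by the $\sup|w|$ terms after taking the difference, is the delicate part; everything else is routine.
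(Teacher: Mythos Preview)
Your proposal is correct and is essentially the paper's argument: the paper takes the single operator $R_{n-1}=(C_\varphi-C_\psi)\circ P_B$ (which already has rank $\le n-1$, so your reindexing to rank $2(n-1)$ is unnecessary), writes the error as $G=B\circ\varphi\cdot F\circ\varphi-B\circ\psi\cdot F\circ\psi$ with $F\in H^2$, $\|F\|\le\|f\|$, and applies Lemma~\ref{lem:saukkho} to $BF$ rather than to $f$. Your anticipated obstacle and its resolution are exactly what the paper does: split $\{z\in\TT:\ |\varphi(z)|>r\}$ into $\{|w|>1/2\}$ (where $|G|\lesssim |w|\,(|F\circ\varphi|+|F\circ\psi|)$ trivially) and $\{|w|\le 1/2\}$ (where Lemma~\ref{lem:saukkho} applies with radius $1/2$ and the passage to $L^2$ uses Theorem~\ref{thm:embedding}(iii) precisely as you describe).
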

	\begin{proof}
		We may assume that $\varphi \not\equiv \psi$. From the definition of the $n^\mathrm{th}$ approximation number, we know that $a_n(C_\varphi-C_\psi)\leq \|C_\varphi-C_\psi-R_{n-1}\|$ for any operator $R_{n-1} \colon H^2 \to H^2$ of rank less than $n-1$. 
		
		We will use the following rank $n-1$ operator.  Let $P_B$ denote the orthogonal projection from $H^2$ to the model space $K_{B}^{2}=H^2\ominus BH^2$ and let $R_{n-1}=(C_\varphi-C_\psi)\circ P_B$. Let $f\in H^2(\DD)$ with $\|f\|\leq 1$ and let us observe that, for any $z\in\DD$,
		$$f(z)-P_Bf(z)=B(z)F(z)$$
		for some $F\in H^2(\DD)$ with $\|F\|_2\leq \|f\|_2\leq 1.$ In particular, 
		$$G\coloneqq(C_\varphi-C_\psi)(f)-R_{n-1}(f)=B\circ\varphi\cdot F\circ\varphi-B\circ\psi\cdot F\circ\psi.$$
		We set 
		$$\Omega_0=\{z\in\TT:\ \varphi(z)\textrm{ and }\psi(z)\textrm{ exist and }\varphi(z)\neq \psi(z)\}$$
		and observe that the complement of $\Omega_0$ has measure $0.$
		We then set 
		\begin{align*}
			\Omega_1&=\{z\in\Omega_0:\ |\varphi(z)|<r\textrm{ and }|\psi(z)|<r\}\\
			\Omega_2&=\{z\in\Omega_0:\ |\varphi(z)|>r\}\\
			\Omega_3&=\{z\in\Omega_0:\ |\psi(z)|>r\}.
		\end{align*}
		We next evaluate the following integral over $\Omega_1$, with respect to the Lebesgue measure $\sigma$:
		\begin{align*}
			\int_{\Omega_1} |G|^2d\sigma&\lesssim \sup_{z\in\Omega_1}|B(\varphi(z))|^2 \int_{\Omega_1}|F\circ\varphi(z)|^2d\sigma(z)+\sup_{z\in\Omega_1} |B(\psi(z))|^2\int_{\Omega_1}|F\circ\psi(z)|^2d\sigma(z)\\
			&\leq \sup_{\{z\in\TT:\ |\varphi(z)|<r\}}|B(\varphi(z))|^2 \|C_\varphi\|^2+\sup_{\{z\in\TT:\ |\psi(z)|<r\}} |B(\psi(z))|^2 \|C_\psi\|^2.
		\end{align*}
		To evaluate the integral over $\Omega_2$, we split the set into two parts: 
		\begin{align*}
			\Omega_2'&=\{z\in\Omega_2:\ |w(z)|> 1/2\}, \\
			\Omega_2''&=\{z\in\Omega_2:\ |w(z)|\leq 1/2\}.
		\end{align*}
		On the one hand,
		\begin{align*}
			\int_{\Omega_2'}|G|^2d\sigma&\lesssim \int_{\Omega'_2} |w(z)|^2 |B(\varphi(z))|^2 |F(\varphi(z))|^2d\sigma(z)+\int_{\Omega'_2} |w(z)|^2 |B(\psi(z))|^2 |F(\psi(z))|^2d\sigma(z)\\
			&\lesssim \sup_{|\varphi(z)|>r} |w(z)|^2 \left(\int_{\TT}|F\circ\varphi|^2d\sigma+\int_{\TT}|F\circ\psi|^2d\sigma\right)\\
			&\lesssim \sup_{|\varphi(z)|>r}|w(z)|^2\left(\|C_\varphi\|^2+\|C_\psi\|^2\right).
		\end{align*}
		On the other hand, for any $z\in\Omega_2''$, $|\varphi(z)|<1$ and $\psi(z)\in\Delta(\varphi(z),1/2)$, we apply Lemma \ref{lem:saukkho} which yields
		\begin{align*}
			\int_{\Omega_2''}|G|^2 d\sigma&\lesssim \int_{\Omega_2''} |w(z)|^2 |P(|BF|)(\varphi(z))|^2d\sigma(z)\\
			&\lesssim \sup_{|\varphi(z)|>r} |w(z)|^2 \|BF\|_2^2 \|C_\varphi\|^2 \lesssim \sup_{|\varphi(z)|>r} |w(z)|^2 \|C_\varphi\|^2
		\end{align*}
		where we have used Theorem \ref{thm:embedding}. The integral over $\Omega_3$ gives rise to similar inequalities, and which allows us to obtain the statement of Theorem \ref{thm:difference}.
	\end{proof}
	
	Our next task is to find a suitable Blaschke product. The choice $B(z)=z^n$ already gives interesting results, however a more intriguing approach is to adapt the choice of $B$ to the maps $\varphi$ and $\psi.$
	We will make use of the work of \cite[Section 3.2]{QS}.
	
	\begin{lem}\label{lem:blasckeQS}
		Let $\varphi$ be a holomorphic self-map of $\DD$ and for $r\in(0,1),$ we set $\mathcal E_r=\{z\in\TT:\ |\varphi(z)|\leq r\}$ and $\Omega_r=\varphi(\mathcal E_r).$ Assume that $\varphi$ is differentiable on $\mathcal E_r$ and that $\Omega_r$ is connected for every $r\in (0,1)$. Then for each $r\in(0,1)$, there exists a Blaschke product $B$ of degree $n-1$ such that 
		$$\sup_{z\in \mathcal E_r}|B(\varphi(z))|\leq \exp(-(\pi^2/2+o(1))n/\ell_P(\Omega_r)) \; \textrm{ when } \; r\to1.$$
	\end{lem}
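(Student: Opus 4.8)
The plan is to construct $B$ explicitly as the finite Blaschke product whose $n-1$ zeros are spread, with a common hyperbolic spacing, along a slight enlargement of the arc $\Omega_r$, and then to bound $-\log|B(w)|$ from below for $w\in\Omega_r$ by recognising the resulting sum as a Riemann sum of the integral $\int_{-\infty}^{\infty}-\log\tanh(|t|/2)\,dt=\pi^2/2$. Since $\varphi$ is differentiable on the compact set $\mathcal E_r$ and $\Omega_r=\varphi(\mathcal E_r)$ is connected, $\Omega_r$ is a rectifiable arc; write $L_r:=\ell_P(\Omega_r)$ and let $\gamma\colon[0,L_r]\to\overline{r\DD}$ be its parametrisation by hyperbolic arc length, so that $d(\gamma(s),\gamma(t))\leq|s-t|$ for all $s,t$, where $d$ denotes the hyperbolic distance. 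We work in the regime $r\to1$ and assume, as is the case in the applications, that $L_r\to\infty$ as $r\to1$.

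Fix an auxiliary length $\ell=\ell(r)$ with $\ell(r)\to\infty$ and $\ell(r)/L_r\to0$ as $r\to1$ (for instance $\ell(r)=\sqrt{L_r}$), and extend $\gamma$ at each of its two endpoints by a hyperbolic geodesic segment of length $\ell$; such a segment stays inside $\DD$, so this yields an arc-length parametrised curve $\widetilde\gamma\colon[-\ell,L_r+\ell]\to\DD$. Set $\widetilde L=L_r+2\ell$, put $h=\widetilde L/(n-2)$ and $s_j=-\ell+(j-1)h$ for $1\leq j\leq n-1$ (so that $s_1=-\ell$ and $s_{n-1}=L_r+\ell$), and let $B$ be the Blaschke product with the $n-1$ zeros $a_j:=\widetilde\gamma(s_j)\in\DD$; thus $B$ has degree $n-1$.

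Now fix $w=\gamma(s)\in\Omega_r$ with $s\in[0,L_r]$. Since $\rho(w,a_j)=\tanh\bigl(d(w,a_j)/2\bigr)$ and $d(w,a_j)\leq|s-s_j|$, we have
\[
-\log|B(w)|=\sum_{j=1}^{n-1}-\log\rho(w,a_j)\ \geq\ \sum_{j=1}^{n-1}g(|s-s_j|),
\]
where $g(t):=-\log\tanh(t/2)$ is positive and decreasing on $(0,\infty)$. The arithmetic input is $\int_{0}^{\infty}g(t)\,dt=\tfrac{\pi^2}{4}$ (equivalently $\int_{0}^{\infty}\log\coth u\,du=\tfrac{\pi^2}{8}$), together with $g(t)=O(e^{-t})$ as $t\to\infty$ and $g(t)=O(\log(1/t))$ as $t\to0^{+}$. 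Because $s$ lies at hyperbolic distance $\geq\ell$ from both $s_1$ and $s_{n-1}$, the points $s_j$ sit on both sides of $s$ and extend at least a distance $\ell$ away on each side; comparing, on each side of $s$, the sum of the decreasing function $g$ against the corresponding integral gives
\[
\sum_{j=1}^{n-1}g(|s-s_j|)\ \geq\ \frac1h\left(2\int_{0}^{\infty}g\,dt-2\int_{0}^{h}g\,dt-2\int_{\ell}^{\infty}g\,dt\right)=\frac1h\left(\frac{\pi^2}{2}-O\!\bigl(h\log\tfrac1h\bigr)-O(e^{-\ell})\right).
\]
Since $\tfrac1h=\tfrac{n-2}{L_r+2\ell}=\tfrac{n}{L_r}\bigl(1+o(1)\bigr)$ as $r\to1$ and $n\to\infty$, and the two error terms are $o(1)$ by the choice of $\ell$, we obtain, uniformly over $w\in\Omega_r$, that $-\log|B(w)|\geq\tfrac{n}{L_r}\bigl(\tfrac{\pi^2}{2}-o(1)\bigr)$, which is exactly the asserted bound $\sup_{z\in\mathcal E_r}|B(\varphi(z))|=\sup_{w\in\Omega_r}|B(w)|\leq\exp\bigl(-(\pi^2/2+o(1))\,n/\ell_P(\Omega_r)\bigr)$.

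The main obstacle is the boundary effect in this last estimate: if one simply spreads equally spaced zeros along $\Omega_r$ itself, a point $w$ near one of the two endpoints of $\Omega_r$ ``sees'' zeros on only one side, and one recovers merely the exponent $\pi^2/4$. The enlargement step is designed precisely to push all of $\Omega_r$ into the bulk of $\widetilde\gamma$, where both sides contribute; keeping the added length $\ell(r)$ of smaller order than $L_r$ is what ensures the zero count — hence the denominator — is not inflated, and this is the only place where the hypothesis $L_r\to\infty$ is used. A minor technical point is the integrability of $g$ at the origin, which bounds the contribution of the zero closest to $w$.
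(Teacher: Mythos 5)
Your argument is correct and is essentially the construction of Queff\'elec--Seip that the paper cites for this lemma (it gives no proof of its own): zeros equidistributed in the hyperbolic metric along the image curve, with $-\log|B(w)|$ bounded below by a two-sided Riemann sum of $g(t)=\log\coth(t/2)$, whose integral $\pi^2/4$ per side produces the constant $\pi^2/2$. The refinement you add --- extending the curve by geodesic tails of length $\ell(r)$ with $\ell(r)\to\infty$ and $\ell(r)/\ell_P(\Omega_r)\to 0$, so that points of $\Omega_r$ near its endpoints still see zeros on both sides --- is precisely what is needed to get $\pi^2/2$ rather than $\pi^2/4$ uniformly, and your verification of it is sound. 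The only caveat, which you flag yourself, is that this (and arguably the meaning of the $o(1)$ in the statement) presupposes $\ell_P(\Omega_r)\to\infty$ as $r\to 1$; this holds in every application in the paper, where the symbol touches $\TT$.
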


	\subsection{Application to the difference of two non-compact composition operators}
	
	The next theorem should be thought of as a counterpart to Theorem \ref{thm:exlowerbound} and it leads immediately to the second half of Example \ref{ex:differencesmooth}.
	
	\begin{thm}\label{thm:moorhouselike}
		Let $\varphi$ and $\psi$ be holomorphic self-maps of $\DD$ and let $\alpha>2$. Suppose that 
		\begin{enumerate}[label=(\roman*)]
			\item $\varphi$ and $\psi$ extend to a $C^1$-function on $\TT$ with $\varphi(1)=\psi(1)=1$ and $|\varphi(z)|,|\psi(z)|<1$ if $z\neq 1$.
			\item There exist $c_1,c_2,C>0$ such that, for all $t\in(-\pi,\pi]$
				\begin{align*}
					c_1 t^2 &\leq 1-|\varphi(e^{it})|^2\leq c_2t^2 \\
					c_1 t^2 &\leq1-|\psi(e^{it})|^2\leq c_2t^2 \\
					|\varphi(e^{it})-\psi(e^{it})| &\leq C |e^{it}-1|^\alpha.
				\end{align*}
		\end{enumerate}
		Then the $n^\mathrm{th}$ approximation number of the difference operator $C_\varphi-C_\psi$ satisfies
		$$a_n(C_{\varphi}-C_{\psi})\lesssim \left(\frac{\log n}{n}\right)^{\alpha-2}.$$
	\end{thm}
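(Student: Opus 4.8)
The plan is to apply Theorem \ref{thm:difference} with a Blaschke product $B$ supplied by Lemma \ref{lem:blasckeQS}, and to balance the two types of terms that appear there by choosing the threshold $r = r_n$ as a function of $n$. First I would record that the hypotheses force $\|C_\varphi\|, \|C_\psi\| < \infty$ (each symbol fixes the single boundary point $1$ with $|\varphi|, |\psi| < 1$ elsewhere, and condition (ii) gives the standard Carleson-type control near $1$), so the factor $\|C_\varphi\| + \|C_\psi\|$ in Theorem \ref{thm:difference} is a harmless constant. It then remains to estimate the four supremum terms.

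For the ``$w$-terms'', note that on $\{z \in \TT : |\varphi(z)| > r\}$ we are looking at $z$ with $1 - |\varphi(e^{it})|^2 > 1 - r^2$, hence by the lower bound in (ii) we have $t^2 \lesssim 1 - r^2$, i.e.\ $|t| \lesssim (1-r^2)^{1/2}$, so $|e^{it} - 1| \lesssim (1-r)^{1/2}$. Then
$$
\rho(\varphi(e^{it}), \psi(e^{it})) \le \frac{|\varphi(e^{it}) - \psi(e^{it})|}{1 - |\varphi(e^{it})|\,|\psi(e^{it})|} \lesssim \frac{|e^{it} - 1|^\alpha}{1 - |\varphi(e^{it})|^2} \lesssim \frac{|t|^\alpha}{t^2} = |t|^{\alpha - 2} \lesssim (1-r)^{(\alpha-2)/2},
$$
using $1 - |\varphi||\psi| \gtrsim 1 - |\varphi|^2 \gtrsim t^2$ in the denominator; the term over $\{|\psi(z)| > r\}$ is handled identically. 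Thus the $w$-terms contribute $O\big((1-r)^{(\alpha-2)/2}\big)$.

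For the ``$B$-terms'', I would verify that $\varphi$ (and $\psi$) satisfy the hypotheses of Lemma \ref{lem:blasckeQS}: differentiability on $\mathcal E_r$ comes from the $C^1$ assumption, and $\Omega_r = \varphi(\mathcal E_r)$ is an arc (connected) since $\mathcal E_r$ is a neighbourhood of $1$ in $\TT$ mapped continuously. The key point is to estimate $\ell_P(\Omega_r)$. Since on $\mathcal E_r$ we have $|t| \lesssim (1-r)^{1/2}$ and $1 - |\varphi(e^{it})|^2 \asymp t^2$, the set $\Omega_r$ lies at pseudohyperbolic depth $\asymp (1-r)$ from $\TT$ and has Euclidean diameter comparable to its distance to $\TT$; a standard computation of the hyperbolic length then gives $\ell_P(\Omega_r) \asymp \log\frac{1}{1-r}$ (it is a ``Carleson-box-shaped'' region). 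Hence Lemma \ref{lem:blasckeQS} yields a Blaschke product $B$ of degree $n-1$ with
$$
\sup_{z \in \mathcal E_r} |B(\varphi(z))| \le \exp\!\left(-\frac{(\pi^2/2 + o(1))\, n}{\ell_P(\Omega_r)}\right) \le \exp\!\left(-\frac{c\, n}{\log\frac{1}{1-r}}\right)
$$
for some $c > 0$ as $r \to 1$, and the same $B$ (or the analogous one for $\psi$; one can also take the product of the two) controls $\sup_{|\psi(z)| < r}|B(\psi(z))|$.

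Finally I would optimise over $r$. Putting $1 - r = n^{-\beta}$ for a parameter $\beta > 0$, the $w$-terms are $O(n^{-\beta(\alpha-2)/2})$ while the $B$-terms are $O\big(\exp(-cn/(\beta \log n))\big)$, which is super-polynomially small; so the $w$-terms dominate and any fixed $\beta$ gives a polynomial bound. To reach the stated exponent $\alpha - 2$, instead set $1 - r$ so that $(1-r)^{1/2} \asymp \frac{\log n}{n}$, i.e.\ $1 - r \asymp (\log n / n)^2$; then the $w$-terms are $O\big((\log n/n)^{\alpha - 2}\big)$, while $\log\frac{1}{1-r} \asymp \log n$, so the $B$-terms are $O\big(\exp(-c'n/\log n)\big)$, again negligible against the polynomial term. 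Combining the four estimates in Theorem \ref{thm:difference} gives
$$
a_n(C_\varphi - C_\psi) \lesssim \left(\frac{\log n}{n}\right)^{\alpha - 2},
$$
as claimed. The main obstacle is the geometric estimate $\ell_P(\Omega_r) \asymp \log\frac{1}{1-r}$: one must check both that $\Omega_r$ reaches depth $\asymp 1-r$ (lower bound on the length, from the lower bound $1 - |\varphi|^2 \le c_2 t^2$ which forces $\varphi(e^{it})$ to be genuinely close to $\TT$ for small $t$) and that its hyperbolic length is not larger than $\log\frac{1}{1-r}$ (upper bound, from $C^1$-regularity controlling how fast $\varphi$ moves along and towards $\TT$), so that the exponential $B$-terms are truly subordinate to the polynomial $w$-terms for the chosen $r_n$.
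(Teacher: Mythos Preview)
Your overall strategy is exactly the paper's: apply Theorem~\ref{thm:difference} with Blaschke products supplied by Lemma~\ref{lem:blasckeQS}, estimate the $w$-terms and the $B$-terms separately, and optimise $r$. Your treatment of the $w$-terms is correct and matches the paper, and the paper likewise takes $B=B_1B_2$ with $B_1,B_2$ the degree-$(n-1)$ Blaschke products associated to $\varphi$ and $\psi$, as you suggest parenthetically.

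The substantive gap is in your hyperbolic-length estimate, and it stems from having the geometry of $\mathcal E_r$ backwards. By definition $\mathcal E_r=\{z\in\TT:|\varphi(z)|\le r\}$, and since $|\varphi(e^{it})|\to 1$ as $t\to 0$, the set $\mathcal E_r$ is the \emph{complement} of a small arc around $1$: from the upper bound $1-|\varphi(e^{it})|^2\le c_2t^2$ one gets $\mathcal E_r\subset\{e^{it}:|t|\ge c\sqrt{1-r}\}$, not $|t|\lesssim\sqrt{1-r}$ as you write. Consequently $\Omega_r=\varphi(\mathcal E_r)$ is not a Carleson-box-shaped region sitting at fixed depth $\asymp 1-r$; it is a curve running from depth $\asymp 1-r$ all the way into a compact part of $\DD$, with $1-|\varphi(e^{it})|^2\asymp t^2$ along it. Using $|z'(t)|\lesssim 1$ from the $C^1$ assumption, the paper computes
\[
\ell_P(\Omega_r)\;\lesssim\;\int_{c\sqrt{1-r}}^{\pi}\frac{|z'(t)|}{1-|z(t)|^2}\,dt
\;\lesssim\;\int_{c\sqrt{1-r}}^{\pi}\frac{dt}{t^2}
\;\lesssim\;\frac{1}{\sqrt{1-r}},
\]
which is much larger than the $\log\frac{1}{1-r}$ you claim. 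This matters for the final balancing: with the correct bound the $B$-term is only $\exp(-Cn\sqrt{1-r})$, so at $\sqrt{1-r}\asymp\log n/n$ it is merely $n^{-C}$, not $\exp(-c'n/\log n)$. One therefore needs the freedom to take $\sqrt{1-r}=D\log n/n$ with $D$ large (depending on $\alpha$ and the implicit constants) to force $n^{-CD}\lesssim(\log n/n)^{\alpha-2}$; this is exactly what the paper does, and it is invisible in your argument because your incorrect $\ell_P$ estimate makes the $B$-term look super-polynomially small regardless of the constant.
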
	    
	\begin{proof}
		Let $r\in (0,1)$ and $n\in\mathbb N$. Let $B_1$ be the Blaschke product of degree $n-1$ associated with $\varphi$ and let $B_2$
		be the Blaschke product of degree $n-1$ associated with $\psi$, as given by Lemma \ref{lem:blasckeQS}. We set $B=B_1B_2$ and observe that
		\begin{align*}
			\sup_{|\varphi(z)|<r} |B(\varphi(z))|&\leq \sup_{|\varphi(z)|<r} |B_1(\varphi(z))|\\
			&\leq \exp\left(-\left(\frac{\pi^{2}} 2+o(1)\right)n/\ell_P(\Omega_r)\right)
		\end{align*}
		where $\Omega_r=\varphi\left(\left\{z\in\TT:\ |\varphi(z)|<r\right\}\right)$.  Now observe that for $t\in(-\pi,\pi]$, $|\varphi(e^{it})|<r\implies |t|\geq c\sqrt{1-r}$ for some constant $c>0$. Therefore
		$$\Omega_r\subset\varphi\left(\left\{e^{it}:\ |t|\geq c\sqrt{1-r}\right\}\right).$$
		Let us denote, for $t\in(-\pi,\pi]$, $z(t)=\varphi(e^{it})$ and let us observe that $|z'(t)|\lesssim 1$ since $\varphi$ is $C^1$ 
		on $\overline{\DD}.$ Therefore
		\begin{align*}
			\ell_P(\Omega_r)&\leq \int_{c\sqrt{1-r}}^{\pi}\frac{|z'(t)|}{1-|z(t)|^2}dt+\int_{-\pi}^{-c\sqrt{1-r}}\frac{|z'(t)|}{1-|z(t)|^2}dt\\
			&\lesssim \int_{c\sqrt{1-r}}^{\pi}\frac{dt}{t^2}\lesssim \frac 1{\sqrt{1-r}}.
		\end{align*}
		On the other hand, let $z=e^{it}$ be such that $|\varphi(z)|>r.$
		Then one can observe that $|t|\lesssim \sqrt{1-r}$. Since 
		$$|z-1|^\alpha=|e^{it}-1|^\alpha\lesssim |t|^\alpha$$
		and since
		\begin{equation*}
			|1-\overline{\varphi(z)}\psi(z)| \geq 1-|\varphi(z)|^2-C|\varphi(z)|\cdot |1-z|^\alpha \gtrsim t^{2}
		\end{equation*}
		it follows that
		$$|w(z)|\lesssim \frac{|z-1|^\alpha}{|1-\overline{\varphi(z)}\psi(z)|}\lesssim |t|^{\alpha-2}\lesssim (1-r)^{\frac\alpha 2-1}.$$
		By a similar argument for the terms involving $\psi$, we finally get
		by Theorem \ref{thm:difference} that
		$$a_{2n-1}(C_\varphi-C_\psi)\lesssim \exp(-Cn\sqrt{1-r})+(1-r)^{\frac\alpha 2-1}.$$
		We conclude by choosing $\sqrt{1-r}=D(\log n/n)^2$ for some sufficiently large constant $D.$
	\end{proof}
	
	We note that, under the previous assumptions, both $C_\varphi$ and $C_\psi$ are non-compact since $\varphi'(1)$ and $\psi'(1)$ exist. 
	
	\medskip

	Combining Theorems \ref{thm:exlowerbound} and \ref{thm:moorhouselike}
	we get the following corollary.
	\begin{cor}
		Let $\varphi$ and $\psi$ be holomorphic self-maps of $\DD$ and let $\alpha>2$. Suppose that 
		\begin{enumerate}[label=(\roman*)]
			\item[(i)] $\varphi$ and $\psi$ extend to a $C^2$-function on $\TT$ with $\varphi(1)=\psi(1)=1$ and $|\varphi(z)|,|\psi(z)|<1$ if $z\neq 1$.
			\item[(ii)] There exist $c_1,c_2>0$ such that, for all $z\in\overline\DD,$ 
			$$c_1 |z-1|^\alpha\leq |\varphi(z)-\psi(z)|\leq c_2|z-1|^\alpha.$$
		\end{enumerate}
		Then the $n^\mathrm{th}$ approximation number of the difference operator $C_\varphi-C_\psi$ satisfies
		$$\frac{1}{(\log n)n^{\alpha-2}}\lesssim a_n(C_{\varphi}-C_{\psi})\lesssim \left(\frac{\log n}{n}\right)^{\alpha-2}.$$
	\end{cor}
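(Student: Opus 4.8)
The plan is to prove the two inequalities separately: the left-hand one will follow from Theorem~\ref{thm:exlowerbound} and the right-hand one from Theorem~\ref{thm:moorhouselike}, so essentially all of the work is to verify that hypotheses (i)--(ii) of the corollary imply the hypotheses of those two theorems.

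\emph{Lower bound.} I would set $\chi=\psi-\varphi$. Since $\varphi$ is holomorphic in $\DD$ and extends to a $C^2$ function on $\TT$ with $\varphi(1)=1$, it admits a second-order Taylor expansion at $1$ valid on $\overline{\DD}$, so $\varphi\in\mathcal C^2(\{1\})$; and $\varphi'(1)\in(0,1]$ automatically (it is the radial derivative of a non-constant self-map fixing $1$, as recorded after the definition of $\mathcal C^2(\{1\})$). Hypothesis (ii) gives $|\chi(z)|\le c_2|z-1|^\alpha$, which since $\alpha>2$ forces $\chi(z)=o((z-1)^2)$, while the other half of (ii) gives $|\chi(z)|\ge c_1|z-1|^\alpha$. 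Also $\varphi$ and $\psi$ are non-constant, since they take the value $1$ at $z=1$ but have modulus $<1$ elsewhere on $\TT$. Thus all the hypotheses of Theorem~\ref{thm:exlowerbound} hold (with $C_1=c_1$), giving $a_n(C_\varphi-C_\psi)\gtrsim \frac1{(\log n)n^{\alpha-2}}$.

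\emph{Upper bound.} A $C^2$ function on $\TT$ is $C^1$ on $\TT$, and the conditions $\varphi(1)=\psi(1)=1$ and $|\varphi(z)|,|\psi(z)|<1$ for $z\ne1$ are assumed, so hypothesis (i) of Theorem~\ref{thm:moorhouselike} holds. The estimate $|\varphi(e^{it})-\psi(e^{it})|=|\chi(e^{it})|\le c_2|e^{it}-1|^\alpha$ is exactly the third line of hypothesis (ii) there. What remains is to establish $c_1't^2\le 1-|\varphi(e^{it})|^2\le c_2't^2$ and its analogue for $\psi$. Plugging $\varphi(e^{it})=1+\varphi'(1)(e^{it}-1)+\tfrac{\varphi''(1)}2(e^{it}-1)^2+o(t^2)$ into $|\cdot|^2$ and using $e^{it}-1=it-\tfrac{t^2}2+o(t^2)$, a short computation gives
$$1-|\varphi(e^{it})|^2=\big(\varphi'(1)-\varphi'(1)^2+\operatorname{Re}\varphi''(1)\big)\,t^2+o(t^2).$$
The upper bound $1-|\varphi(e^{it})|^2\lesssim t^2$ is then immediate near $t=0$, and trivial for $|t|$ bounded away from $0$. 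The lower bound reduces to the strict positivity of $\varphi'(1)-\varphi'(1)^2+\operatorname{Re}\varphi''(1)$ (and likewise for $\psi$); granting this, Theorem~\ref{thm:moorhouselike} applies and yields $a_n(C_\varphi-C_\psi)\lesssim(\log n/n)^{\alpha-2}$, completing the proof.

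\emph{Main obstacle.} The delicate step is precisely the strict positivity of the coefficient $\varphi'(1)-\varphi'(1)^2+\operatorname{Re}\varphi''(1)$, equivalently $1-|\varphi(e^{it})|^2\gtrsim t^2$: the $C^2$ hypothesis pins down the Taylor coefficients of $\varphi$ at $1$ but not whether the quadratic term of $1-|\varphi(e^{it})|^2$ degenerates, i.e.\ whether $\varphi(\TT)$ has higher-than-quadratic contact with $\TT$ at $1$. I would handle it via the auxiliary map $G(z)=\varphi'(1)^{-1}\frac{1+z}{1-z}-\frac{1+\varphi(z)}{1-\varphi(z)}$, which extends continuously to $\overline{\DD}$: since $\operatorname{Re}\frac{1+\varphi(z)}{1-\varphi(z)}=\frac{1-|\varphi(z)|^2}{|1-\varphi(z)|^2}\ge0$ while $\operatorname{Re}\frac{1+z}{1-z}=0$ on $\TT$, one gets $\operatorname{Re}G\le0$ on $\TT$ and hence, by the maximum principle, in $\DD$; moreover $G(1)=1-\varphi'(1)^{-1}-\varphi''(1)\varphi'(1)^{-2}$, and the boundary real part of $G$ is $-\frac{1-|\varphi(e^{it})|^2}{|1-\varphi(e^{it})|^2}$, whose value at $t=0$ is $\operatorname{Re}G(1)$. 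The task is then to deduce strict negativity of $\operatorname{Re}G(1)$ — a second-order Julia--Carathéodory lower bound — from the facts that $G$ is non-constant (because $|\varphi|<1$ off $1$) and that its boundary density is strictly positive away from $t=0$; this is the part I expect to require the most care.
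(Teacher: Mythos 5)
Your strategy is exactly the paper's: the corollary appears there with the one‑line justification that it follows by combining Theorems~\ref{thm:exlowerbound} and~\ref{thm:moorhouselike}, and your verification of the hypotheses of Theorem~\ref{thm:exlowerbound} for the lower bound is complete and correct. The problem is the step you yourself flag in the upper bound, and it is a genuine gap that cannot be closed: hypothesis (i) of the corollary does \emph{not} imply the quadratic contact condition $1-|\varphi(e^{it})|^2\geq c_1t^2$ required by Theorem~\ref{thm:moorhouselike}. Concretely, take
$$\varphi(z)=\frac{1+z^2}{z^2-2z+3},$$
obtained by setting $\frac{1+\varphi}{1-\varphi}=\frac{1+z}{1-z}+(1-z)$. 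This is a rational self-map of $\DD$ (its poles $1\pm i\sqrt2$ lie outside $\overline\DD$, and $\operatorname{Re}\frac{1+\varphi}{1-\varphi}>0$ on $\DD$), it is real-analytic on $\TT$, $\varphi(1)=1$, and $|\varphi(e^{it})|<1$ for $t\neq0$ because $\operatorname{Re}\frac{1+\varphi(e^{it})}{1-\varphi(e^{it})}=1-\cos t$; yet
$$1-|\varphi(e^{it})|^2=(1-\cos t)\,|1-\varphi(e^{it})|^2\sim t^4/2.$$
Here $\varphi'(1)=1$ and $\varphi''(1)=0$, so your coefficient $\varphi'(1)-\varphi'(1)^2+\operatorname{Re}\varphi''(1)$ vanishes. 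This example also shows that the route you sketch cannot succeed: for this $\varphi$ the function $G$ is non-constant with $\operatorname{Re}G\le0$ and with boundary values strictly negative off $t=0$, and nevertheless $\operatorname{Re}G(1)=0$. There is no second-order Julia--Carath\'eodory inequality forcing strict negativity; a non-positive harmonic function can perfectly well have an isolated boundary zero. Moreover the corollary's hypotheses are not vacuous for such a $\varphi$: $\psi=\varphi+c(z-1)^{5}$ with $c>0$ small satisfies (i) and (ii) with $\alpha=5$, and for that pair the proof of the upper bound breaks down.

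The honest conclusion is that the quadratic contact $c_1t^2\le 1-|\varphi(e^{it})|^2$ (and likewise for $\psi$) must be kept as an explicit hypothesis, exactly as in Theorem~\ref{thm:moorhouselike} — equivalently one must assume $\varphi'(1)-\varphi'(1)^2+\operatorname{Re}\varphi''(1)>0$; it does hold in the motivating example $\varphi(z)=(1+z)/2$, where $1-|\varphi(e^{it})|^2=(1-\cos t)/2$. With that hypothesis added, the rest of your verification (the upper bound $1-|\varphi(e^{it})|^2\lesssim t^2$ from the Taylor expansion, the transfer of (ii) to the third line of Theorem~\ref{thm:moorhouselike}(ii), and the reduction of the lower bound to Theorem~\ref{thm:exlowerbound}) is sound. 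Note that this is a defect of the corollary as printed rather than of your write-up alone: the paper supplies no argument for the implication either, and your proposal has at least the merit of isolating precisely where it fails.
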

	
	One may apply this corollary for $\alpha=k+1$ if $\varphi_0,\varphi_1$ are $C^{k+1}$ on $\mathbb T$ for some $k\geq 2$, they satisfy (i) and it holds that $\varphi_{0}(1)=\varphi_{1}(1), \varphi'_{0}(1)=\varphi'_{1}(1),\dots, \varphi^{(k)}_{0}(1)=\varphi^{(k)}_{1}(1) $
	(we say that $\varphi_0$ and $\varphi_1$ have the same data at  $\{1\}$ up to $k^\mathrm{th}$ order). 
	In line with the results of \cite{MT} and \cite{BOU}, this gives further information on the link between the degree of data similarity and compactness. 
	
	\subsection{Application to the difference of two maps with a corner}
	
	We now revisit the choice $$\varphi(z)=\frac1{1+(1-z)^{1/2}}$$
	and $\psi(z)=\varphi(z)+c \chi(z)$
	with 
	$$\chi(z)=\exp\left(-\frac 1{(1-z)^{1/2}}\right)$$
	for some sufficiently small $c>0.$ We intend to show that there exists $b>0$
	such that, for all $n\geq 2,$
	\begin{equation}\label{eq:upperlen}
		a_n(C_\varphi-C_\psi)\lesssim \exp\left(-b\frac n{\log n}\right).
	\end{equation}
	Let $\delta\in(0,1),$ $r=1-\delta$ and let us set 
	$$\Omega_r=\varphi\left(\left\{z\in\TT:\ |\varphi(z)|\leq r\right\}\right),\quad \Omega'_r=\psi\left(\left\{z\in\TT:\ |\psi(z)|\leq r\right\}\right).$$
	
	We first show that $\ell_P(\Omega_r),\ell_P(\Omega'_r)\lesssim |\log(\delta)|$. Let us prove it for the most difficult case, namely $\Omega'_r.$ We set $z(t)=\psi(e^{it}).$ As we have already observed in Section \ref{sec:map}, 
	$$|z(t)|^2=1-\sqrt 2 |t|^{1/2}+o(|t|^{1/2})$$
	so that the condition $|\psi(e^{it})|\leq r$ implies $|t|\geq c_1\delta^2$
	for some constant $c_1>0.$
	Computing the derivative we find that
	$$z'(t)=\frac{-ie^{it}}{2(1+(1-e^{it})^{1/2})^2(1-e^{it})^{1/2}}+c
	\frac{-ie^{it}}{2(1-e^{it})^{3/2}}\exp\left(-\frac1{(1-e^{it})^{1/2}}\right)$$
	and via a Taylor expansion we easily get that
	$$|z'(t)|\lesssim \frac 1{|t|^{1/2}}.$$
	We finally find 
	$$\ell_P(\Omega'_r)\lesssim \int_{c_1\delta^2}^{\pi}\frac{dt}{|t|^{1/2}\cdot |t|^{1/2}}\lesssim |\log(\delta)|.$$
	For $n\in\mathbb N$, we let $B_1$ and $B_2$ denote, respectively, the Blaschke products of degree $n$ associated to $\varphi$ and $\psi$, as given by Lemma \ref{lem:blasckeQS}. 
	We have thus obtained that
	$$\sup_{|\varphi(z)|<r}|B(\varphi(z))|, \sup_{|\psi(z)|<r}|B(\psi(z))|\leq \exp(-c_2n/|\log(\delta)|)$$
	for some $c_2>0.$ 
	
	In order to apply Theorem \ref{thm:difference}, it remains to estimate $\sup_{|\varphi(z)|>r}w(z)$ and $\sup_{|\psi(z)|>r}w(z)$. In view of the previous estimate, we just need to concentrate on $\sup_{|t|\leq c_3\delta^2}w(e^{it}).$
	We note that 
	\begin{equation*}
		|1-\overline{\varphi}(e^{it})\psi(e^{it})| \geq 1-|\varphi(e^{it})|^2-|\varphi(e^{it})|\cdot|\chi(e^{it})| \geq c_4 |t|^{1/2}
	\end{equation*}
	whereas 
	$$|\varphi(e^{it})-\psi(e^{it})|\lesssim \exp(-c_5/|t|^{1/2}).$$
	Finally,
	$$a_{2n-1}(C_\varphi-C_\psi)\lesssim \exp(-c_2n/|\log(\delta)|)+\exp(-c_6/\delta).$$
	The choice $\delta=\log(n)/n$ thus yields \eqref{eq:upperlen}. $\hfill\square$

	\section{Approximation Numbers of weighted Composition Operators}
		
	We turn now to determine estimates of the approximation numbers of weighted composition operators $M_\omega C_\varphi \colon g \mapsto \omega \cdot (g\circ \varphi)$. We will improve several results of \cite{LLQRP} and we begin with the upper bounds.

		\begin{thm}\label{thm:upperweighted}
			Let $\varphi \colon \DD\to\DD$ be holomorphic and $\omega\in H^\infty(\DD)$.
			For $n\in\mathbb N$ and $r\in(0,1)$, let $B$ be an arbitrary Blaschke product of degree $n-1$. Then 
			\begin{equation}\label{eq:appnumweight}
				a_n(M_\omega C_\varphi)\leq \left(\sup_{\{z\in\mathbb{T}:|\varphi(z)|\leq r\}}|B(\varphi(z))|^2\|T\|^2+\delta_{0}^{2}(r)\|C_\varphi\|^2\right)^{1/2}
			\end{equation}
			where $\delta_0(r)=\sup_{\{t:|\varphi(e^{it})|>r\}}|\omega(\varphi(e^{it}))|.$
		\end{thm}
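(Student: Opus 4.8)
The plan is to follow the same strategy as in the proof of Theorem~\ref{thm:difference}, but adapted to a single weighted composition operator. We take $R_{n-1} = (M_\omega C_\varphi)\circ P_B$, where $P_B$ is the orthogonal projection onto the model space $K_B^2 = H^2 \ominus BH^2$; this has rank $n-1$, so $a_n(M_\omega C_\varphi) \leq \|M_\omega C_\varphi - R_{n-1}\|$. Fix $f \in H^2$ with $\|f\|_2 \leq 1$ and write $f - P_Bf = B\cdot F$ with $\|F\|_2 \leq 1$, so that
$$
G \coloneqq (M_\omega C_\varphi)(f) - R_{n-1}(f) = \omega\cdot\big((B\cdot F)\circ\varphi\big) = \omega\cdot (B\circ\varphi)\cdot(F\circ\varphi).
$$
First I would split the circle into $\Omega_1 = \{z\in\TT : |\varphi(z)|\leq r\}$ and its complement $\Omega_2 = \{z\in\TT : |\varphi(z)| > r\}$ (boundary values exist a.e.\ and the exceptional set is null).

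On $\Omega_1$ I would bound $|G(z)| \leq |\omega(\varphi(z))|\cdot |B(\varphi(z))|\cdot|F(\varphi(z))|$ and use that $|\omega|\leq\|\omega\|_\infty$ together with the supremum of $|B(\varphi(z))|$ over $\Omega_1$; the remaining integral $\int_{\Omega_1}|F(\varphi(z))|^2\,d\sigma(z) \leq \|F\circ\varphi\|_2^2$ is controlled. Here the natural operator appearing is $T = M_\omega C_\varphi$ itself: indeed $\int_{\Omega_1}|\omega(\varphi)|^2|B(\varphi)|^2|F(\varphi)|^2\,d\sigma \leq \sup_{\Omega_1}|B(\varphi)|^2\int_\TT|\omega(\varphi)|^2|F(\varphi)|^2\,d\sigma = \sup_{\Omega_1}|B(\varphi)|^2\,\|M_\omega C_\varphi (F)\|_2^2 \leq \sup_{\Omega_1}|B(\varphi)|^2\,\|T\|^2$, which matches the first term of \eqref{eq:appnumweight} with $T = M_\omega C_\varphi$. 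On $\Omega_2$ I would instead discard $|B(\varphi(z))| \leq 1$ and bound $|\omega(\varphi(z))| \leq \delta_0(r)$, giving $\int_{\Omega_2}|G|^2\,d\sigma \leq \delta_0^2(r)\int_\TT|F\circ\varphi|^2\,d\sigma \leq \delta_0^2(r)\|C_\varphi\|^2\|F\|_2^2 \leq \delta_0^2(r)\|C_\varphi\|^2$, which is the second term. Summing the two estimates and taking the supremum over $\|f\|_2\leq 1$ yields \eqref{eq:appnumweight}.

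There is essentially no serious obstacle here, since—unlike in Theorem~\ref{thm:difference}—there is no difference to exploit, hence no need for the Saukkho-type Lemma~\ref{lem:saukkho} or the Carleson embedding of Theorem~\ref{thm:embedding}; the estimate is a clean two-region split. The only point requiring a little care is the precise identification of the norm in the first term: one must notice that $z\mapsto \omega(\varphi(z))\, F(\varphi(z))$ is exactly $(M_\omega C_\varphi)(F)$ restricted to the boundary, so the boundedness of $T = M_\omega C_\varphi$ is what absorbs the weight, whereas on $\Omega_2$ the weight is uniformly small and it is more efficient to keep only $C_\varphi$ and extract the factor $\delta_0(r)$. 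I would also remark at the end that the inequality holds for \emph{every} Blaschke product $B$ of degree $n-1$, so one is free to optimise the choice of $B$ afterwards (via Lemma~\ref{lem:blasckeQS}) to make the first supremum small.
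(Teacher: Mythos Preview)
Your argument is exactly the paper's: the same finite-rank approximant $R_{n-1}=T\circ P_B$, the same factorisation $G=\omega\cdot(B\circ\varphi)\cdot(F\circ\varphi)$, and the same two-region split with the first piece controlled by $\sup_{\Omega_1}|B\circ\varphi|\cdot\|T\|$ and the second by $\delta_0(r)\|C_\varphi\|$. One small slip to fix: since $G(z)=\omega(z)\,B(\varphi(z))\,F(\varphi(z))$, the weight in your integrals is $\omega(z)$, not $\omega(\varphi(z))$, so the function you identify as $(M_\omega C_\varphi)(F)$ is $z\mapsto\omega(z)F(\varphi(z))$ and $\delta_0(r)$ should really read $\sup_{|\varphi(e^{it})|>r}|\omega(e^{it})|$ --- a confusion that the paper's own statement and proof also carry.
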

		\begin{proof}
			Let $B$ be an arbitrary Blaschke product of degree $n-1$ and let $K_{B}^{2}$ be the model space defined as $K_{B}^{2}=H^2\ominus BH^2$. Let $P_B$ denote the orthogonal projection from $H^2$ to $K_{B}^{2}$ and set $R_{n-1}=T\circ P_B$. Suppose also that $f\in H^2(\mathbb{D})$ with $\|f\|_{H^2}=1$ then 
			$$
			\|(T-R_{n-1})f\|_{H^2}^{2}=\int_{\mathbb{T}}|G(z)|^2d\sigma(z)
			$$  
			where 
			\begin{equation*}
				G(z) \coloneqq \omega(z)(f(\varphi(z))-P_Bf(\varphi(z)))=\omega(z)B(\varphi(z))F(\varphi(z))
			\end{equation*}
			 and $\|F\|_{H^2}\leq \|f\|_{H^2}=1$.
			Then for $0<r<1$,
			\begin{align*}
				\|(T-R_{n-1})f\|_{H^2}^{2}&\leq \sup_{\{z\in\mathbb{T}:|\varphi(z)|\leq r\}}|B(\varphi(z))|^2\|T\|^2+ \int_{\{z\in\mathbb{T}:|\varphi(z)|>r\}}|\omega_0\circ\varphi(z)|^2|F(\varphi(z))|^2d\sigma(z)\\
				%&\leq  \sup_{\{z\in\mathbb{T}:|\varphi(z)|\leq r\}}|B(\varphi(z))|^2\|T\|^2 +
				%\sup_{\{t\in[-\pi,\pi]: |\gamma(t)|>r\}}\omega_0(\gamma(t))\int_{\mathbb T}|F\circ\varphi(z)|^2d\sigma(z)\\
				&\leq  \sup_{\{z\in\mathbb{T}:|\varphi(z)|\leq r\}}|B(\varphi(z))|^2\|T\|^2 + \delta_0(r)^2\|C_\varphi\|^2.
			\end{align*}
		\end{proof}
	For a smooth symbol at the boundary, we get the following corollary:	
		\begin{cor}
			Let $\varphi$ be a holomorphic self-map of $\DD$ and let $\omega\in H^\infty(\DD)$. Suppose that 
			\begin{enumerate}[label=(\roman*)]
				\item $\varphi$ extends to a $C^1$-function on $\TT$ with $\varphi(1)=1$ and $|\varphi(z)|<1$ if $z\neq 1$.
				\item There exist $c_1,c_2>0$ such that, for all $t\in(-\pi,\pi],$
				$$c_1 t^2\leq 1-|\varphi(e^{it})|^2\leq c_2 t^2.$$
				\item There exist $c_2,\alpha>0$ such that, for all $z\in\DD,$ 
				$$|\omega(z)|\leq c_3 |z-1|^\alpha.$$
			\end{enumerate}
			Then 
			$$
			a_n(M_{\omega}C_{\varphi})\lesssim \left(\dfrac{\log(n)}{n}\right)^{\alpha}.
			$$
		\end{cor}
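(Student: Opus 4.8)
The plan is to feed the general upper bound of Theorem~\ref{thm:upperweighted} the Blaschke product produced by Lemma~\ref{lem:blasckeQS}, and then to run exactly the boundary analysis already carried out in the proof of Theorem~\ref{thm:moorhouselike}. Write $T=M_\omega C_\varphi$ (which is bounded, since $\omega\in H^\infty(\DD)$ and $C_\varphi$ is bounded), fix $\delta\in(0,1)$, put $r=1-\delta$, and let $B$ be the Blaschke product of degree $n-1$ associated with $\varphi$ as in Lemma~\ref{lem:blasckeQS}; assumptions~(i)--(ii) ensure $\varphi$ is differentiable on $\mathcal E_r=\{z\in\TT:|\varphi(z)|\le r\}$ and, as in the smooth and corner examples above, that $\Omega_r=\varphi(\mathcal E_r)$ may be taken connected. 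By Theorem~\ref{thm:upperweighted} it then suffices to estimate $\sup_{z\in\mathcal E_r}|B(\varphi(z))|$ and $\delta_0(r)=\sup_{\{t:|\varphi(e^{it})|>r\}}|\omega(\varphi(e^{it}))|$, because $\|T\|$ and $\|C_\varphi\|$ are constants independent of $n$.

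For the first term I would copy the computation of $\ell_P(\Omega_r)$ from the proof of Theorem~\ref{thm:moorhouselike}. By~(ii) the condition $|\varphi(e^{it})|\le r$, i.e.\ $1-|\varphi(e^{it})|^2\ge 1-r^2\asymp\delta$, forces $|t|\gtrsim\sqrt\delta$, so $\Omega_r\subset\varphi(\{e^{it}:|t|\gtrsim\sqrt\delta\})$; since $\varphi\in C^1(\TT)$ its tangential derivative is bounded, and therefore
$$\ell_P(\Omega_r)\lesssim\int_{c\sqrt\delta}^{\pi}\frac{dt}{t^2}\lesssim\frac1{\sqrt\delta}.$$
Inserting this into Lemma~\ref{lem:blasckeQS} gives, for some $c>0$ and all large $n$,
$$\sup_{z\in\mathcal E_r}|B(\varphi(z))|\le\exp\big(-c\,n\sqrt\delta\big).$$

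For the second term, $\varphi\in C^1(\TT)$ with $\varphi(1)=1$ yields a Lipschitz estimate $|\varphi(e^{it})-1|\lesssim|t|$, while~(ii) shows $|\varphi(e^{it})|>r$ implies $t^2\lesssim 1-r^2\asymp\delta$, i.e.\ $|t|\lesssim\sqrt\delta$; combining these with~(iii),
$$\delta_0(r)\le\sup_{|t|\lesssim\sqrt\delta}c_3\,|\varphi(e^{it})-1|^\alpha\lesssim\sup_{|t|\lesssim\sqrt\delta}|t|^\alpha\lesssim\delta^{\alpha/2}.$$
Plugging both bounds into \eqref{eq:appnumweight} gives $a_n(M_\omega C_\varphi)\lesssim\exp(-c\,n\sqrt\delta)+\delta^{\alpha/2}$, and I would conclude by taking $\sqrt\delta=D\,\frac{\log n}{n}$ with $D$ large enough that $c\,n\sqrt\delta=cD\log n\ge\alpha\log n$: then the first summand is $\lesssim n^{-\alpha}\lesssim(\log n/n)^\alpha$ and the second is $D^\alpha(\log n/n)^\alpha$, which is the claimed estimate.

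The only genuinely delicate point is the same one encountered in Theorem~\ref{thm:moorhouselike}: justifying the application of Lemma~\ref{lem:blasckeQS}, in particular the connectedness of $\Omega_r$ and the fact that the $o(1)$ appearing there (as $r\to1$) is harmless because $\delta=\delta(n)\to0$ as $n\to\infty$. Everything else is a direct transcription of boundary estimates already established for the smooth example.
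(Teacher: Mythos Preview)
Your proposal is correct and follows essentially the same approach as the paper: apply Theorem~\ref{thm:upperweighted} with the Blaschke product from Lemma~\ref{lem:blasckeQS}, reuse the $\ell_P(\Omega_r)\lesssim 1/\sqrt{1-r}$ computation from Theorem~\ref{thm:moorhouselike}, estimate $\delta_0(r)$ via the Lipschitz bound $|\varphi(e^{it})-1|\lesssim|t|$ together with~(ii)--(iii), and optimise with $\sqrt{1-r}\asymp(\log n)/n$. The paper's proof is slightly terser but the ingredients and their assembly are identical.
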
 
		
		\begin{proof}
			
			Let $n\in\mathbb N,$ $r\in(0,1)$ and let $B$ be the Blaschke product 
			associated to $\varphi$ by Lemma \ref{lem:blasckeQS}. Arguing as in 
			the proof of Theorem \ref{thm:moorhouselike} we find that
			$$
			\sup_{|\varphi(z)|<r}|B(\varphi(z))|\leq  \exp(-cn\sqrt{1-r}) 
			$$
			for some $c>0$.
			Moreover, provided $|\varphi(e^{it})|>r$,
			\begin{equation*}
				|\omega(\varphi(e^{it}))| \lesssim |\varphi(e^{it})-1|^\alpha \lesssim |t|^\alpha \lesssim (1-r)^{\alpha/2}.
			\end{equation*}
			Therefore, applying Theorem \ref{thm:upperweighted} with  $\sqrt{1-r}=\frac{C\log(n)}{n}$ for a sufficiently large $C>0$  we obtain the result.
		\end{proof}

		\begin{rem}
			The preceding result applied to $\varphi(z)=(1+z)/2$ and $\omega(z)=(1-z)^{\alpha}$ improves \cite[Theorem 2.3]{LLQRP}), where the Blaschke product used was $B(z)=z^n$.
		\end{rem}

		We now turn our attention to the lower bounds for the approximation number of weighted composition operators. We begin by improving \cite[Lemma 2.6]{LLQRP}.
		
		\begin{thm}\label{thm:lowerboundweighted}
 Let $\varphi$ be a holomorphic self-map of $\DD$ and let $\omega\in H^\infty(\DD)$. Let $n\geq 1,$ let $Z=\{z_1,\dots, z_n\}$ and let $W=\varphi(Z)$ be such that $\textrm{card}(W)=n.$ Then  
			$$
			a_n(M_\omega C_\varphi)\geq M(W)^{-1}\|v_Z\|_{\mathcal{C}}^{-1/2}\inf_{1\leq j\leq n}\left(|\omega(z_j)|^2\dfrac{1-|z_j|^2}{1-|\varphi(z_j)|^2}\right)^{1/2}.
			$$   
			In particular, 
			$$a_n(M_\omega C_\varphi)\gtrsim \frac{\delta(W)}{\left(1+\log\left(\frac 1{\delta(W)}\right)\right)^{1/2}\left(1+\log\left(\frac 1{\delta(Z)}\right)\right)^{1/2}
			}\inf_{1\leq j\leq n}\left(|\omega(z_j)|^2\dfrac{1-|z_j|^2}{1-|\varphi(z_j)|^2}\right)^{1/2}$$
		\end{thm}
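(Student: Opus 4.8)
The plan is to imitate the proof of Theorem~\ref{thm:lowerbound}, replacing the two-kernel configuration $k_{\varphi(z_j)}-k_{\psi(z_j)}$ by the single weighted family $\overline{\omega(z_j)}\,k_{\varphi(z_j)}$. First I would dispose of the trivial case: if $W$ is not an interpolating sequence for $H^2(\DD)$ then $M(W)=+\infty$, the right-hand side vanishes, and there is nothing to prove; so I may assume $W$ is interpolating. The starting point is the adjoint identity $(M_\omega C_\varphi)^\ast k_w=\overline{\omega(w)}\,k_{\varphi(w)}$ for $w\in\DD$, which is immediate from $\langle M_\omega C_\varphi f,k_w\rangle=\omega(w)f(\varphi(w))=\omega(w)\langle f,k_{\varphi(w)}\rangle$. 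As in Theorem~\ref{thm:lowerbound}, \cite[Lemma~2.3]{QS} reduces matters to bounding $\|(M_\omega C_\varphi)^\ast f\|$ from below over the unit sphere of $E(Z)=\mathrm{span}\{k_{z_1},\dots,k_{z_n}\}$.

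Next I would record the one-family analogue of Lemma~\ref{lem:lowerinterpolation}: if $W$ is interpolating, then for every finite scalar sequence $(c_j)$,
$$\Big\|\sum_j c_j\,k_{\varphi(z_j)}\Big\|_{H^2}^2\geq M(W)^{-2}\sum_j|c_j|^2\,\|k_{\varphi(z_j)}\|^2,$$
proved by the very same duality argument (solve the interpolation problem with data proportional to $c_j/(1-|\varphi(z_j)|^2)$, estimate the norm of the solution via~\eqref{eq:intconstant}, and test $\sum_j c_j k_{\varphi(z_j)}$ against the normalised solution). Writing $f=\sum_j b_j k_{z_j}$ with $\|f\|_2=1$, so that $(M_\omega C_\varphi)^\ast f=\sum_j b_j\overline{\omega(z_j)}\,k_{\varphi(z_j)}$, and applying the above with $c_j=b_j\overline{\omega(z_j)}$ gives
$$\|(M_\omega C_\varphi)^\ast f\|_2^2\geq M(W)^{-2}\sum_j\frac{|b_j|^2\,|\omega(z_j)|^2}{1-|\varphi(z_j)|^2}.$$

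To conclude, I would set $\mu^2=\inf_{1\leq j\leq n}|\omega(z_j)|^2\,\frac{1-|z_j|^2}{1-|\varphi(z_j)|^2}$, so that $\frac{|\omega(z_j)|^2}{1-|\varphi(z_j)|^2}\geq\frac{\mu^2}{1-|z_j|^2}$ for each $j$, and then invoke~\eqref{eq:carlesonineq} to get
$$\|(M_\omega C_\varphi)^\ast f\|_2^2\geq M(W)^{-2}\mu^2\sum_j\frac{|b_j|^2}{1-|z_j|^2}\geq M(W)^{-2}\mu^2\,\|v_Z\|_{\mathcal C}^{-1}.$$
Taking the infimum over $f$ yields the first inequality, and the ``in particular'' statement then follows by feeding~\eqref{EQ:ShapiroShields} and~\eqref{eq:carlesonnorm} into it, exactly as in the passage from Theorem~\ref{thm:lowerbound} to Corollary~\ref{cor:lowerbound}. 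Since every ingredient is already available, I do not expect any real obstacle here; the only points demanding care are the bookkeeping of conjugates in the adjoint formula and in the duality step, and the observation that the non-interpolating case is vacuous.
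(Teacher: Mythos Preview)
Your proposal is correct and follows essentially the same route as the paper's proof: both reduce via \cite[Lemma~2.3]{QS} to bounding $\|(M_\omega C_\varphi)^\ast f\|$ from below on $E(Z)$, compute the adjoint on kernels, apply the one-family interpolation lower bound (the single-sequence analogue of Lemma~\ref{lem:lowerinterpolation}), factor out the infimum, and finish with~\eqref{eq:carlesonineq}, \eqref{EQ:ShapiroShields}, and~\eqref{eq:carlesonnorm}. Your treatment is slightly more explicit (you spell out the adjoint identity and the vacuous non-interpolating case---note that a finite set of distinct points in $\DD$ is always interpolating, so this case cannot actually occur), but there is no substantive difference.
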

		
		\begin{proof}
			We follow the proof of Theorem \ref{thm:lowerbound}.
			Let $E(Z)=\mathrm{span}\{k_{z_1},\dots,k_{z_n}\}$ and pick $f\in E(Z)$, $f=\sum_{j}b_jk_{z_j}$ with $\|f\|_2=1$. Then
			\begin{align*}         
				\|(M_\omega C_{\varphi})^{*}(f)\|_{2}^{2}&=	\|\sum_{j}b_j\overline{\omega(z_j)}k_{\varphi(z_j)}\|_{2}^{2}\\
				&
				\geq M(W)^{-2}\sum_{j}|b_j|^2\dfrac{|\omega(z_j)|^2}{1-|\varphi(z_j)|^2}
				\\
				&\geq 
				M(W)^{-2}\left(\inf_{j}|\omega(z_j)|^2\dfrac{1-|z_j|^2}{1-|\varphi(z_j)|^2}\right)\sum_{j}\dfrac{|b_j|^2}{1-|z_j|^2}\\
				&\geq M(W)^{-2}\|\nu_Z\|_{\mathcal C}^{-1}\inf_{j}\left(|\omega(z_j)|^2\dfrac{1-|z_j|^2}{1-|\varphi(z_j)|^2}\right)
			\end{align*}
		\end{proof}    
		
		\begin{cor}
			Let $\varphi \colon \DD\to\DD$ be holomorphic and which extends to a function belonging to $\mathcal C^2(\{1\})$. Let $\omega\in H^\infty(\DD)$. If there exist $\alpha>0$ and $C>0$ such that $|\omega(z)|\geq C_1|1-z|^\alpha$ for all $z\in\DD$, then
			 $$a_n(M_\omega C_\varphi)\gtrsim\frac1{n^\alpha}.$$
		\end{cor}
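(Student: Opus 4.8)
The plan is to apply Theorem~\ref{thm:lowerboundweighted}, in its ``in particular'' form, to a finite sequence $Z$ chosen as in the proof of Theorem~\ref{thm:exlowerbound}. Note first that $\varphi(1)=1$: since $\varphi$ is a self-map of $\DD$ and its radial derivative $\varphi'(1)=a\in(0,1]$ is real and positive, the Julia--Carath\'eodory theorem forces the radial boundary value $\varphi(1)$ to equal $1$. Fix $n\geq 1$, put $\theta_j=\tfrac1{n+j}$ and
$$z_j=\frac{1+e^{i\theta_j}}2,\qquad 1\leq j\leq n,\qquad Z=\{z_1,\dots,z_n\},\qquad W=\varphi(Z).$$
These $z_j$ are distinct points of $\DD$ clustering at $1$, with $\theta_j\asymp 1/n$ and $1-|z_j|^2=|1-z_j|^2=\sin^2(\theta_j/2)\asymp 1/n^2$. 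We will see below that $\operatorname{card}(W)=n$ once $n$ is large enough; the remaining finitely many values of $n$ are irrelevant for a $\gtrsim$ estimate, since $a_n(M_\omega C_\varphi)>0$ for every $n$ (because $\omega\not\equiv 0$, as $|\omega(z)|\geq C_1|1-z|^\alpha$, and $\varphi$ is non-constant).

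The key step is to bound $\delta(W)$ (and $\delta(Z)$) from below. Inserting $z_j-1=\tfrac12 i\theta_j+O(\theta_j^2)$ into the expansion $\varphi(z_j)=1+a(z_j-1)+b(z_j-1)^2+o((z_j-1)^2)$ yields $\varphi(z_j)=1+ia_{n,j}+b_{n,j}$ with $a_{n,j},b_{n,j}\in\mathbb R$,
$$a_{n,j}=\frac{a/2}{n+j}+\frac{c_2}{(n+j)^2}+o\Big(\frac1{n^2}\Big),\qquad b_{n,j}=O\Big(\frac1{n^2}\Big),$$
where $a/2\neq 0$ and $c_2\in\mathbb R$. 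This is exactly the structure of the points treated in the \emph{Fact} within the proof of Theorem~\ref{thm:exlowerbound} (with $n-j$ replaced by $n+j$), and that computation transfers verbatim --- here with the single family $\{\varphi(z_j)\}$ in place of $\{w^{(1)}_j\}\cup\{w^{(2)}_j\}$ --- producing an absolute constant $C>0$ with $\prod_{1\leq k\neq j\leq n}\rho(\varphi(z_j),\varphi(z_k))\geq C$ for every $j$ and every $n$. Since each factor is then $\geq C$, the $\varphi(z_j)$ are pairwise distinct (so $\operatorname{card}(W)=n$) and $\delta(W)\geq C$. Running the same computation with $\varphi=\mathrm{id}$ gives $\delta(Z)\gtrsim 1$; hence $(1+\log(1/\delta(W)))^{1/2}$ and $(1+\log(1/\delta(Z)))^{1/2}$ are bounded.

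Finally, the infimum in Theorem~\ref{thm:lowerboundweighted}: on one hand $1-|z_j|^2=\sin^2(\theta_j/2)\asymp 1/n^2$ while, by the $\mathcal C^2(\{1\})$-expansion, $0<1-|\varphi(z_j)|^2=O(|z_j-1|^2)=O(1/n^2)$, so $\dfrac{1-|z_j|^2}{1-|\varphi(z_j)|^2}\gtrsim 1$; on the other hand $|\omega(z_j)|\geq C_1|1-z_j|^\alpha=C_1\sin^\alpha(\theta_j/2)\gtrsim n^{-\alpha}$. Therefore
$$\inf_{1\leq j\leq n}\left(|\omega(z_j)|^2\,\frac{1-|z_j|^2}{1-|\varphi(z_j)|^2}\right)^{1/2}\gtrsim n^{-\alpha},$$
and substituting this, together with $\delta(W),\delta(Z)\asymp 1$, into Theorem~\ref{thm:lowerboundweighted} gives $a_n(M_\omega C_\varphi)\gtrsim n^{-\alpha}$.

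The only genuinely delicate point is the bound $\delta(W)\gtrsim 1$: controlling the product of pseudohyperbolic distances between the image points $\varphi(z_j)$, which near the boundary fixed point $1$ are, to second order, the $z_j$ scaled by $a\in(0,1]$. As explained, this is exactly the content of the \emph{Fact} established within the proof of Theorem~\ref{thm:exlowerbound}, and once it is invoked the rest is a routine Taylor expansion at $1$.
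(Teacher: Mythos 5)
Your proof is correct and follows essentially the same route as the paper's: the paper chooses the points $z_j=\frac{1+e^{i/(n-j)}}{2}$, cites the computations (in particular the \emph{Fact}) from the proof of Theorem~\ref{thm:exlowerbound} to get $\delta(Z),\delta(W)\gtrsim 1$ and $(1-|z_j|^2)/(1-|\varphi(z_j)|^2)\gtrsim 1$, and feeds $|\omega(z_j)|\gtrsim n^{-\alpha}$ into Theorem~\ref{thm:lowerboundweighted} --- exactly what you do, with the details written out. The one step to flag is your justification that $\varphi(1)=1$: the Julia--Carath\'eodory theorem does not yield this from $\varphi'(1)=a\in(0,1]$ alone (consider $\varphi(z)=z/2$, for which $\varphi'(1)=1/2$ yet $\varphi(1)=1/2$ and the corollary's conclusion fails), so $\varphi(1)=1$ must really be read as an implicit hypothesis of the corollary --- as it is stated explicitly in Theorem~\ref{thm:exlowerbound}, whose computations the paper's own one-line proof reuses without further comment.
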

		
		\begin{proof}
			For $n\geq 1,$ we set
			$$z_j=\frac{1+e^{i/(n-j)}}2,\ 1\leq j\leq n/2.$$
			The computations made during the proof of Theorem \ref{thm:exlowerbound} show that $\delta(W),\delta(Z)\gtrsim 1$ and that
			$$\inf_{1\leq j\leq n}\frac{1-|z_j|^2}{1-|\varphi(z_j)|^2}\gtrsim 1.$$
			The result follows from the fact that
			$$|\omega(z_j)|\gtrsim \frac 1{n^\alpha}.$$
		\end{proof}
		
		Of course, we may apply the previous corollary to $\varphi(z)=(1+z)/2$ and $\omega(z)=(1-z)^\alpha$ to conclude that 
		$$\frac 1{n^\alpha}\lesssim a_n(M_\omega C_\varphi)\lesssim \frac{\log(n)}{n^\alpha}.$$
		
		\section{Applications to Bidisc Case}

		Throughout this section we will use the one-dimensional techniques and examples that we have hitherto considered to understand the approximation numbers of the differences of composition operators with symbols defined on the bidisc $\mathbb{D}^2$.
		
		\subsection{Split Symbols} We will start with the simple case of split symbols. Let $\Phi_0  \colon \mathbb{D}^2\rightarrow\mathbb{D}^2$ and $\Phi_{1} \colon \mathbb{D}^2\rightarrow\mathbb{D}^2$ be  symbols given by
		\begin{align*}
			\Phi_{0}&=(\phi_0,\psi) \colon \mathbb{D}^2\rightarrow\mathbb{D}^2, \\
			\Phi_{1}&=(\phi_1,\psi)  \colon \mathbb{D}^2\rightarrow\mathbb{D}^2
		\end{align*}
		where the symbols $\phi_0,\phi_1  \colon \mathbb{D}\rightarrow\mathbb{D}$ depend only on $z_1$ and $\psi  \colon \mathbb{D}\rightarrow\mathbb{D}$ depends only on $z_2$.
		
		For the sake of completeness, we first recall some basic facts concerning tensor products. Let $f,g\in H^2(\mathbb{D})$, then for $z=(z_1,z_2) \in \DD^2$,
		\begin{align*}
			\left( C_{\Phi_0}-C_{\Phi_{1}} \right) (f\otimes g) &=f(\phi_0(z_1))g(\psi(z_2))-f(\phi_1(z_1))g(\psi(z_2))\\
			&=(f(\phi_0(z_1))-f(\phi_1(z_1)))g(\psi(z_2))=((C_{\phi_0}-C_{\phi_1})(f)\otimes C_{\psi}(g))(z)
		\end{align*}
		and since $H^2(\mathbb{D}^2)=H^2(\mathbb{D})\otimes H^2(\mathbb{D})$ we have 
		$$
		C_{\Phi_0}-C_{\Phi_{1}}=(C_{\phi_0}-C_{\phi_1})\otimes C_{\psi}.
		$$
		 
		 Moreover, for the approximation numbers of tensor products we have the following result \cite[Lemma 3.2]{LQRP19}.
		 \begin{lem}
		 	Let $S \colon H_1\rightarrow H_1$ and $T \colon H_2\rightarrow H_2$ be two compact linear operators where $H_1$ and $H_2$ are Hilbert spaces. If $S\otimes T$ is the tensor product acting on $H_1\otimes H_2$, then
		 	$$
		 	a_{mn}(S\otimes T)\geq a_m(S)a_n(T)
		 	$$
		 	for all positive integers $m,n$.
		 \end{lem}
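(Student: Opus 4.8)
The plan is to establish the singular-value (approximation number) factorisation inequality $a_{mn}(S\otimes T)\geq a_m(S)a_n(T)$ for compact operators on Hilbert spaces, which is classical. Since $S$ and $T$ are compact, they admit Schmidt (singular value) decompositions: there are orthonormal systems $(e_i)$, $(f_i)$ in $H_1$ and $(g_j)$, $(h_j)$ in $H_2$ such that $S=\sum_{i\geq 1} s_i\langle\cdot,e_i\rangle f_i$ and $T=\sum_{j\geq 1}t_j\langle\cdot,g_j\rangle h_j$, where $s_i=a_i(S)\downarrow 0$ and $t_j=a_j(T)\downarrow 0$ are the (ordered) singular values, which coincide with the approximation numbers on Hilbert spaces. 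Then $S\otimes T$ has the Schmidt decomposition $S\otimes T=\sum_{i,j}s_i t_j\langle\cdot,e_i\otimes g_j\rangle (f_i\otimes h_j)$, so the singular values of $S\otimes T$ are exactly the numbers $\{s_i t_j : i,j\geq 1\}$ arranged in non-increasing order.

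**Next I would** reduce the claim to a purely combinatorial statement about ordering products. Write $(\lambda_k)_{k\geq 1}$ for the non-increasing rearrangement of the double sequence $(s_i t_j)_{i,j\geq 1}$; then $a_k(S\otimes T)=\lambda_k$. To prove $\lambda_{mn}\geq s_m t_n=a_m(S)a_n(T)$, it suffices to exhibit $mn$ pairs $(i,j)$ for which $s_i t_j\geq s_m t_n$, because then at least $mn$ of the numbers in the multiset $\{s_i t_j\}$ are $\geq s_m t_n$, forcing the $mn$-th largest, namely $\lambda_{mn}$, to be $\geq s_m t_n$ as well. The natural choice is the index rectangle $\{(i,j):1\leq i\leq m,\ 1\leq j\leq n\}$, which has exactly $mn$ elements; for any such pair, monotonicity of the singular values gives $s_i\geq s_m$ and $t_j\geq t_n$, hence $s_i t_j\geq s_m t_n$. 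This closes the argument.

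**The only real subtlety** — and it is minor — is the bookkeeping when some singular values vanish or when the operators have finite rank, so that the "$mn$-th largest" needs to be interpreted correctly (if fewer than $mn$ of the products are positive, then $a_{mn}(S\otimes T)=0$ and either $a_m(S)=0$ or $a_n(T)=0$, so the inequality holds trivially). One should also note the identification $a_k(A)=s_k(A)$ valid specifically on Hilbert spaces, which underlies the whole reduction; this is where the Hilbert-space hypothesis is used. Everything else is the elementary observation that the singular values of a tensor product are the products of the singular values, together with the counting argument above.
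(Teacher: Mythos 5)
The paper does not give its own proof of this lemma---it is quoted verbatim from \cite[Lemma 3.2]{LQRP19}---so there is no internal argument to compare against. Your proof is correct and is the standard one: on Hilbert spaces $a_k$ coincides with the $k$-th singular value, the Schmidt decompositions of $S$ and $T$ combine to give a Schmidt decomposition of $S\otimes T$ with singular values $\{s_i t_j\}$, and the $m\times n$ index rectangle exhibits $mn$ of these products that are $\geq s_m t_n$, forcing the $mn$-th largest to be $\geq a_m(S)a_n(T)$; your remark on the degenerate finite-rank case is the right way to handle the only possible loose end.
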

\begin{cor}
Under the above assumptions,
$$a_n(C_{\phi_0}-C_{\phi_1})\geq a_{\sqrt n}(C_{\phi_0}-C_{\phi_1})a_{\sqrt n}(C_\psi).$$
\end{cor}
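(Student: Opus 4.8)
The plan is to read off the Corollary directly from the factorisation $C_{\Phi_0}-C_{\Phi_1}=(C_{\phi_0}-C_{\phi_1})\otimes C_\psi$ established just above, combined with the preceding Lemma on approximation numbers of tensor products and the elementary fact that, for a fixed operator, the sequence of approximation numbers is non-increasing. Throughout I would write $S=C_{\phi_0}-C_{\phi_1}$ and $T=C_\psi$, both acting on $H^2(\DD)$ and both assumed compact so that the preceding Lemma applies; the operator under study is then $S\otimes T=C_{\Phi_0}-C_{\Phi_1}$ on $H^2(\DD^2)=H^2(\DD)\otimes H^2(\DD)$.

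The steps are as follows. First I would fix the reading of the non-integer index, namely $a_{\sqrt n}(\cdot)\coloneqq a_{\lceil\sqrt n\rceil}(\cdot)$, as is customary. Setting $k=\lceil\sqrt n\rceil$ and applying the preceding Lemma with $m=n=k$ gives
$$a_{k^2}(S\otimes T)\geq a_k(S)\,a_k(T).$$
Since $k^2=\lceil\sqrt n\rceil^2\geq n$ and $j\mapsto a_j(S\otimes T)$ is non-increasing, we get $a_n(S\otimes T)\geq a_{k^2}(S\otimes T)$. Chaining the two inequalities and recalling $S\otimes T=C_{\Phi_0}-C_{\Phi_1}$, $S=C_{\phi_0}-C_{\phi_1}$ and $T=C_\psi$, we obtain
$$a_n(C_{\Phi_0}-C_{\Phi_1})\geq a_{\sqrt n}(C_{\phi_0}-C_{\phi_1})\,a_{\sqrt n}(C_\psi),$$
which is the assertion.

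There is no genuine obstacle here: the argument is a two-line bookkeeping exercise. The only point requiring a moment's care is to invoke the monotonicity of $(a_j)_j$ in the correct direction, using $\lceil\sqrt n\rceil^2\geq n$ so that a lower bound for the larger index $k^2$ transfers to a lower bound for the index $n$ (and not the reverse). One should also record that it is the compactness of $C_{\phi_0}-C_{\phi_1}$ and $C_\psi$ that licenses the application of the preceding Lemma; if either fails to be compact the inequality is still easily checked by hand, but that case does not arise in the examples to which the Corollary will be applied.
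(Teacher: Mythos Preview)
Your proposal is correct and follows exactly the approach the paper intends: the Corollary is stated without proof in the paper, as an immediate consequence of the tensor factorisation $C_{\Phi_0}-C_{\Phi_1}=(C_{\phi_0}-C_{\phi_1})\otimes C_\psi$ together with the preceding Lemma applied with $m=n=\lceil\sqrt n\rceil$ and the monotonicity of approximation numbers. You have also correctly read the left-hand side of the displayed inequality as $a_n(C_{\Phi_0}-C_{\Phi_1})$ (the bidisc operator), which is clearly what is meant despite the typo $C_{\phi_0}-C_{\phi_1}$ appearing in the statement.
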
	
We may now use all the examples given in the present paper and in the litterature on composition operators to get interesting examples on the bidisc.

		\begin{example}\label{eg:glued}
			Let $\varphi_0(z)=\dfrac{1}{1+(1-z)^{1/2}}$, $\varphi_1(z)=\varphi_0(z)+c\chi(z)$ where 
			\begin{equation*}
				\chi(z)=\exp\left(-\dfrac{1}{(1-z)^{1/2}}\right)
			\end{equation*} 
			for sufficiently small $c>0$ and let $\psi$ be the cusp map, so that $a_n(C_\psi)\lesssim e^{-\alpha n/\log n}$ by \cite[Theorem 4.3]{LQRP13}. Then for some $a>0$,
			$$
			a_n(C_{\Phi_0}-C_{\Phi_{1}})\gtrsim \exp\left(-a\dfrac{\sqrt{n}}{\log(n)}\right).
			$$
		\end{example}
		
		\subsection{Glued Symbols} Let $\varphi,\psi \colon \mathbb{D}\rightarrow\mathbb{D}$ be nonconstant analytic maps. Define 
		\begin{align*}
					\Phi_{\varphi}(z_1,z_2) &=(\varphi(z_1),\varphi(z_1)) \\
			\Phi_{\psi}(z_1,z_2) &=(\psi(z_1),\psi(z_1))
		\end{align*}
		then $\Phi_{\varphi}$ and $\Phi_{\psi}$ are \emph{glued symbols} in the terminology of \cite{LQRP19}.
As observed in \cite{LQRP19}, if we define the subspace
			$$
			E=\left\{f\in H^2(\mathbb{D}^2):\dfrac{\partial f}{\partial z_2}\equiv 0\right\},
			$$
we note that $E$ is isometrically isomorphic to $H^2(\mathbb{D})$. When we restrict the operators $C_{\Phi_{\varphi}}$ and $C_{\Phi_{\psi}}$ to $E$, then by the definition of glued symbols we obtain that their behaviour is exactly that of $C_\varphi$ and $C_\psi$ on $H^2(\mathbb{D})$ and we get
			\begin{align*}
				a_n(C_{\Phi_{\varphi}}-C_{\Phi_{\psi}})&\geq a_n([C_{\Phi_{\varphi}}-C_{\Phi_{\psi}}]_{\vert E}).
			\end{align*}
Again we may use the specific examples of the present paper.
			
%		\begin{example} \label{eg:glued}
%			Let $\varphi(z)=\dfrac{1}{1+(1-z)^{1/2}}$, $\psi(z)=\varphi_0(z)+c\chi(z)$ where 
%			\begin{equation*}
%				\chi(z)=\exp\left(-\dfrac{1}{(1-z)^{1/2}}\right)
%			\end{equation*}
%			 with sufficiently small $c>0$. We define the subspace
%			$$
%			E=\left\{f\in H^2(\mathbb{D}^2):\dfrac{\partial f}{\partial z_2}\equiv 0\right\}
%			$$
%			and we note that $E$ is isometrically isomorphic to $H^2(\mathbb{D})$. When we restrict the operators $C_{\Phi_{\varphi}}$ and $C_{\Phi_{\psi}}$ to $E$, then by the definition of glued symbols we obtain that their behaviour is exactly that of $C_\varphi$ and $C_\psi$ on $H^2(\mathbb{D})$ and by using  \eqref{eq:lowerlen} we get
%			\begin{align*}
%				a_n(C_{\Phi_{\varphi}}-C_{\Phi_{\psi}})&\geq a_n([C_{\Phi_{\varphi}}-C_{\Phi_{\psi}}]_{\vert E})\\&=a_n(C_\varphi-C_\psi)\gtrsim \exp\left(-a\dfrac{n}{\log n}\right)
%			\end{align*}
%			for some $a>0$ and for all $n\geq 2$.$\hfill\square$
%		\end{example}
		
		\subsection{Triangularly Separated Symbols}
		Before introducing triangularly separated symbols, let us first give a simple observation about the approximation numbers of the differences of weighted composition operators which will be useful in the below example.
		
		\begin{prop}
			Let $u_0,u_1\in H^\infty(\mathbb{D})$ be weights satisfying $\|u_0\|\leq 1$, $\|u_1\|\leq 1$ and let $\varphi_0,$ $\varphi_1$ be holomorphic self-maps of $\mathbb{D}$. Then
			\begin{align*}
				a_n(M_{u_0}C_{\varphi_0} - M_{u_1}C_{\varphi_1}) \leq \min \left( \right. &\|u_0\|_{\infty} a_n(C_{\varphi_0}-C_{\varphi_1})+\|u_0-u_1\|_{\infty}a_n(C_{\varphi_1}), \\
				& \left.  \quad \|u_1\|_{\infty} a_n(C_{\varphi_0}-C_{\varphi_1})+\|u_0-u_1\|_{\infty}a_n(C_{\varphi_0}) \right).
			\end{align*}
		\end{prop}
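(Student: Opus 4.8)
The plan is to deduce the estimate from two elementary properties of approximation numbers, combined with a pair of algebraic identities. The two properties are, first, the \emph{ideal property} in the form
$$a_n(M_uT)\leq \|M_u\|\,a_n(T)=\|u\|_\infty\,a_n(T),\qquad u\in H^\infty(\DD),$$
valid for every bounded $T$: if $R$ has rank $<n$ then so does $M_uR$, and $\|M_uT-M_uR\|\leq \|M_u\|\,\|T-R\|$, while $\|M_u\|_{H^2\to H^2}=\|u\|_\infty$ (the upper bound being trivial and the lower bound following from $M_u^*k_w=\overline{u(w)}k_w$); and, second, the \emph{sub-additivity} of approximation numbers, which lets one bound the approximation number of a sum of two operators in terms of the approximation numbers of the summands.

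I would then record the two decompositions
\begin{align*}
	M_{u_0}C_{\varphi_0}-M_{u_1}C_{\varphi_1}&=M_{u_0}(C_{\varphi_0}-C_{\varphi_1})+M_{u_0-u_1}C_{\varphi_1}\\
	&=M_{u_1}(C_{\varphi_0}-C_{\varphi_1})+M_{u_0-u_1}C_{\varphi_0},
\end{align*}
obtained by adding and subtracting $M_{u_0}C_{\varphi_1}$ (respectively $M_{u_1}C_{\varphi_0}$) and using $M_{u_0}C_\varphi-M_{u_1}C_\varphi=M_{u_0-u_1}C_\varphi$. Applying the ideal property to the two summands of the first line,
$$a_n\big(M_{u_0}(C_{\varphi_0}-C_{\varphi_1})\big)\leq\|u_0\|_\infty\,a_n(C_{\varphi_0}-C_{\varphi_1}),\qquad a_n\big(M_{u_0-u_1}C_{\varphi_1}\big)\leq\|u_0-u_1\|_\infty\,a_n(C_{\varphi_1}),$$
and combining these gives the first member of the minimum; the second line is obtained from the first by interchanging the roles of the indices $0$ and $1$, and it delivers the second member in exactly the same way. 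Taking the minimum of the two bounds finishes the proof.

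This is a short argument and I do not expect a genuine obstacle: the whole content lies in choosing the two symmetric decompositions, after which everything is soft. The one point to handle with care is the rank count when combining the two summands — concretely one fixes near-optimal approximants $R_1$ of $C_{\varphi_0}-C_{\varphi_1}$ and $R_2$ of $C_{\varphi_1}$, uses $M_{u_0}R_1+M_{u_0-u_1}R_2$ as an approximant of $M_{u_0}C_{\varphi_0}-M_{u_1}C_{\varphi_1}$, bounds the error by $\|u_0\|_\infty\|C_{\varphi_0}-C_{\varphi_1}-R_1\|+\|u_0-u_1\|_\infty\|C_{\varphi_1}-R_2\|$, and lets $R_1,R_2$ tend to optimality. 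Since the weights enter only through multiplication operators, whose operator norm equals the sup-norm of the symbol, no analytic input beyond these facts is needed.
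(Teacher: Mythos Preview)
Your approach is the same as the paper's: both rest on the decomposition
\[
M_{u_0}C_{\varphi_0}-M_{u_1}C_{\varphi_1}=M_{u_0}(C_{\varphi_0}-C_{\varphi_1})+M_{u_0-u_1}C_{\varphi_1}
\]
(and its symmetric twin obtained by interchanging the indices), combined with $\|M_u\|=\|u\|_\infty$. The paper phrases the estimate via restriction to a subspace $E$ of codimension~$<n$ rather than via explicit finite-rank approximants, but on a Hilbert space these two viewpoints are equivalent.

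You are right to flag the rank count as the delicate point, and in fact your resolution of it does not work as written: if $\mathrm{rank}\,R_1<n$ and $\mathrm{rank}\,R_2<n$, then $M_{u_0}R_1+M_{u_0-u_1}R_2$ has rank at most $2(n-1)$, which delivers only $a_{2n-1}$ on the left-hand side, not $a_n$. The general inequality $a_n(S+T)\leq a_n(S)+a_n(T)$ is false --- take $S$ and $T$ to be rank-one projections with orthogonal ranges, so that $a_2(S)=a_2(T)=0$ while $a_2(S+T)=1$; the valid sub-additivity is $a_{m+n-1}(S+T)\leq a_m(S)+a_n(T)$. The paper's own argument shares this defect (a single subspace $E$ of codimension $<n$ cannot be near-optimal for both summands at once, and the pointwise step $|a+b|^2\leq|a|^2+|b|^2$ used there is not valid either). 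For the applications downstream only the asymptotic decay rate matters, so replacing $a_n$ by $a_{2n-1}$ on the left is harmless; but as a standalone inequality, what your argument (and the paper's) actually establishes is the version with $a_{2n-1}$ on the left.
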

		\begin{proof}
			Let $E$ be a subspace of $H^2(\mathbb{D})$ with $\mathrm{codim}(E)<n$ and $f\in E$, then
			\begin{align*}
				\|u_0C_{\varphi_0}(f)-u_1C_{\varphi_1}(f)\|_{E}^{2} &=\int_{0}^{2\pi}|u_0C_{\varphi_0}(f)-u_1C_{\varphi_1}(f)|^2d\theta \\
				&=\int_{0}^{2\pi}|u_0(C_{\varphi_0}(f)-C_{\varphi_1}(f))+C_{\varphi_1}(f)(u_0-u_1)|^2d\theta \\
				&\leq \|u_0\|_{\infty}^{2}\int_{0}^{2\pi}|C_{\varphi_0}(f)-C_{\varphi_1}(f)|^2d\theta+\|u_0-u_1\|_{\infty}^{2}\int_{0}^{2\pi}|C_{\varphi_1}(f)|^2d\theta
			\end{align*}
			which gives that 
			\begin{equation*}
				a_n(M_{u_0}C_{\varphi_0}-M_{u_1}C_{\varphi_1})\leq \|u_0\|_{\infty} a_n(C_{\varphi_0}-C_{\varphi_1})+\|u_0-u_1\|_{\infty}a_n(C_{\varphi_1}).
			\end{equation*} 
			By interchanging the terms added/subtracted from the first integral we get that 
			\begin{equation*}
				a_n(M_{u_0}C_{\varphi_0}-M_{u_1}C_{\varphi_1})\leq \|u_1\|_{\infty} a_n(C_{\varphi_0}-C_{\varphi_1})+\|u_0-u_1\|_{\infty}a_n(C_{\varphi_0})
			\end{equation*} and the result follows.
		\end{proof}

		Let us now consider the approximation numbers of the differences of composition operators with the \emph{triangularly separated} symbols, i.e.\ symbols of the form
		$$
		\Phi(z_1,z_2)=(\phi(z_1),u(z_1)z_2)
		$$
		where $u, \phi \colon \mathbb{D}\rightarrow\mathbb{D}$ are self-maps of $\mathbb{D}$. A holomorphic function $f(z_1,z_2)=\sum_{j,k\geq 0}c_{j,k}z_{1}^{j}z_{2}^{k}\in H^2(\mathbb{D}^2)$ can be written as $f(z_1,z_2)=(\sum_{k\geq0}f_k(z_1))z_{2}^{k}$, where $f_k(z_1)=\sum_{j\geq0}c_{j,k}z_{1}^{j}$. Hence the map $J \colon H^2(\mathbb{D}^2)\rightarrow \bigoplus_{k=0}^{\infty}H^2(\mathbb{D})$, given by $Jf=(f_k)_{k\geq 0}$, is an isometric isomorphism (for details of this correspondence cf.\ \cite[Section 5.1]{LQRP19}). 
		
		\begin{thm}
			Let $u_0,u_1\in H^\infty(\mathbb{D})$ be weights satisfying $\|u_0\|\leq 1$, $\|u_1\|\leq 1$ and $\varphi_0$ and $\varphi_1$ be self-maps of the unit disc $\mathbb{D}$. Consider the symbols
				\begin{equation*}
									\Phi_0(z_1,z_2)=(\varphi_0(z_1),u_0(z_1)z_2), \quad \Phi_1(z_1,z_2)=(\varphi_1(z_1),u_1(z_1)z_2).
				\end{equation*}			
				For  positive integers $n_0,\dots,n_K$ and $N=n_0+\dots+n_K-K$,  the approximation numbers of the difference satisfy 
				\begin{align*}			
				a_N(C_{\Phi_{0}}-C_{\Phi_{1}})&	\leq \max\{\max_{0\leq k\leq K}(\min  [\|u_0\|_{\infty}^{k} a_{n_k}(C_{\varphi_0}-C_{\varphi_1})+\|u_{0}^{k}-u_{1}^{k}\|_{\infty}a_{n_k}(C_{\varphi_1}), \\
				&\|u_1\|_{\infty}^{k} a_{n_k}(C_{\varphi_0}-C_{\varphi_1})+\|u_{0}^{k}-u_{1}^{k}\|_{\infty}a_{n_k}(C_{\varphi_0})]),  \sup_{k>K}\|M_{u_{0}^{k}}C_{\varphi_0}-M_{u_{1}^{k}}C_{\varphi_1}\|\}.
			\end{align*}
		\end{thm}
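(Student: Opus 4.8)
The plan is to exploit the decomposition $J \colon H^2(\DD^2)\to\bigoplus_{k\geq0}H^2(\DD)$ recalled just before the statement, under which both $C_{\Phi_0}$ and $C_{\Phi_1}$ become block-diagonal. Concretely, if $f(z_1,z_2)=\sum_{k\geq0}f_k(z_1)z_2^k$ then $C_{\Phi_0}f=\sum_{k\geq0}\bigl(u_0^k\cdot(f_k\circ\varphi_0)\bigr)z_2^k$, so that under $J$ the operator $C_{\Phi_0}$ acts as $\bigoplus_{k\geq0}M_{u_0^k}C_{\varphi_0}$, and similarly $C_{\Phi_1}=\bigoplus_{k\geq0}M_{u_1^k}C_{\varphi_1}$. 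Hence
\begin{equation*}
	C_{\Phi_0}-C_{\Phi_1}\cong\bigoplus_{k\geq0}\bigl(M_{u_0^k}C_{\varphi_0}-M_{u_1^k}C_{\varphi_1}\bigr).
\end{equation*}
First I would record this identification carefully (it is essentially \cite[Section 5.1]{LQRP19}), noting that $\|u_j^k\|_\infty\leq\|u_j\|_\infty^k\leq1$ so each block operator is bounded with norm at most $2$.

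The second ingredient is the standard estimate for approximation numbers of a direct sum of operators: if $T=\bigoplus_{k\geq0}T_k$ and one picks integers $n_0,\dots,n_K\geq1$ and sets $N=n_0+\dots+n_K-K$, then
\begin{equation*}
	a_N(T)\leq\max\Bigl\{\max_{0\leq k\leq K}a_{n_k}(T_k),\ \sup_{k>K}\|T_k\|\Bigr\}.
\end{equation*}
The proof is routine: for each $k\leq K$ choose $R_k$ of rank $<n_k$ with $\|T_k-R_k\|\leq a_{n_k}(T_k)+\eta$, let $R=\bigoplus_{k\leq K}R_k\oplus\bigoplus_{k>K}0$, which has rank $<n_0+\dots+n_K-K+1=N+1$, i.e.\ rank $\leq N$, hence rank $<N+1$; wait, one must be careful: rank $R\leq(n_0-1)+\dots+(n_K-1)=N$, so rank $R<N+1$, but the definition asks for rank $<N$. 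The clean fix is to take $R_k$ of rank $\le n_k-1$, giving rank$R\le N$, so one actually bounds $a_{N+1}(T)$; matching the indexing in the statement then just amounts to the convention that $a_n(R)=\inf_{\operatorname{rank}R<n}$, and with $N=n_0+\dots+n_K-K$ one has $\operatorname{rank}R\le N-K\cdot 0\dots$ — this bookkeeping with the index shift is the one genuinely fiddly point and I would state the direct-sum lemma precisely once and then simply quote it.

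Third, I would apply the Proposition proved immediately above (on differences of weighted composition operators) to each block $T_k=M_{u_0^k}C_{\varphi_0}-M_{u_1^k}C_{\varphi_1}$, with weights $u_0^k,u_1^k$ in place of $u_0,u_1$. Since $\|u_j^k\|_\infty\leq\|u_j\|_\infty^k$, that proposition yields exactly
\begin{equation*}
	a_{n_k}(T_k)\leq\min\bigl[\,\|u_0\|_\infty^k a_{n_k}(C_{\varphi_0}-C_{\varphi_1})+\|u_0^k-u_1^k\|_\infty a_{n_k}(C_{\varphi_1}),\ \|u_1\|_\infty^k a_{n_k}(C_{\varphi_0}-C_{\varphi_1})+\|u_0^k-u_1^k\|_\infty a_{n_k}(C_{\varphi_0})\bigr].
\end{equation*}
Substituting these bounds for $k\leq K$, together with the trivial bound $\|T_k\|\leq\|M_{u_0^k}C_{\varphi_0}-M_{u_1^k}C_{\varphi_1}\|$ for $k>K$, into the direct-sum lemma produces precisely the claimed inequality. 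The main obstacle is not any single deep estimate but rather assembling the pieces with the correct index conventions: verifying that the unitary $J$ genuinely intertwines $C_{\Phi_i}$ with the stated block-diagonal operator (in particular that the $z_1$- and $z_2$-variables separate as claimed for triangularly separated symbols), and getting the rank count in the direct-sum lemma to land exactly on $N=n_0+\dots+n_K-K$ rather than an off-by-one neighbour. Once the direct-sum lemma is stated and proved cleanly, the rest is a direct substitution.
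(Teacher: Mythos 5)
Your proposal follows the paper's proof exactly: conjugate by $J$ to obtain the block-diagonal form $\bigoplus_{k\geq0}\bigl(M_{u_0^k}C_{\varphi_0}-M_{u_1^k}C_{\varphi_1}\bigr)$, apply the direct-sum estimate for approximation numbers (which the paper simply cites as \cite[Lemma 5.3]{LQRP19} and you prove by hand), and then bound each block $T_k$ via the preceding proposition with weights $u_0^k,u_1^k$. The off-by-one worry in your rank count is unfounded: there are $K+1$ indices $0,\dots,K$, so $\operatorname{rank}R\le\sum_{k=0}^{K}(n_k-1)=n_0+\cdots+n_K-(K+1)=N-1<N$, and the lemma lands exactly on $a_N$ with no index shift needed.
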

		\begin{proof}
			For the given symbols $\Phi_{0}$ and $\Phi_{1}$ we have that
			$$
			J(C_{\Phi_{0}}-C_{\Phi_{1}})J^{-1}=\bigoplus_{k\geq 0}(M_{u_{0}^{k}}C_{\varphi_0}-M_{u_{1}^{k}}C_{\varphi_1}  )
			$$	
			and thus $a_n(C_{\Phi_{0}}-C_{\Phi_{1}})=a_n(T)$, where $T=\bigoplus_{k\geq 0} T_k$ and $T_k=M_{u_{0}^{k}}C_{\varphi_0}-M_{u_{1}^{k}}C_{\varphi_1}$.  By combining the previous proposition with \cite[Lemma 5.3]{LQRP19} we obtain the result.         
		\end{proof} 
		
		\begin{example}
			Let $u_0$ and $u_1$ be holomorphic weights such that $\|u_0\|_\infty,\|u_1\|_\infty<c<1$. Define the symbols
		\begin{align*}
				\Phi_{0}(z_1,z_2)&=(\varphi(z_1),u_0(z_1)z_2)\\
				\Phi_{1}(z_1,z_2)&=(\psi(z_1),u_1(z_1)z_2)			
		\end{align*}
			where $\varphi(z)$ and $\psi(z)$ are the symbols as given in Example \ref{eg:glued}.
			
			We first let $T_k=M_{u_{0}^{k}}C_{\varphi}-M_{u_{1}^{k}}C_{\psi}$, then $\|T_k\|\lesssim c^k$ and thus $\sup_{k>K}\|T_k\|\lesssim c^K$. Furthermore, by \cite[Theorem 1.2]{QS} for some $a_1,a_2>0$ we have that $a_n(C_\varphi)\lesssim e^{-a_1\sqrt{n}}$ and $a_n(C_\psi)\lesssim e^{-a_2\sqrt{n}}$. Hence taking $n_0=n_1=\dots=n_K=2^K$, we get that
		\begin{align*}
				\max_{0\leq k\leq K}\left( \right. \min &[\|u_0\|_{\infty}^{k} a_{n_k}(C_{\varphi}-C_{\psi_1})+\|u_{0}^{k}-u_{1}^{k}\|_{\infty}a_{n_k}(C_{\psi}),  \\ 
				&\|u_1\|_{\infty}^{k} a_{n_k}(C_{\varphi}-C_{\psi})+\|u_{0}^{k}-u_{1}^{k}\|_{\infty}a_{n_k}(C_{\varphi})] \left. \right)
			\lesssim e^{-\alpha 2^{K/2}}
		\end{align*}
		for some $\alpha>0$. Then since $n_0+n_1+\dots+n_K=( K+1)2^K$ we have
		$$
		a_{(K+1)2^K}(C_{\Phi_{0}}-C_{\Phi_{1}})\lesssim e^{-\alpha 2^{K/2}}
		$$
		from which we deduce  that
		$$
			a_{N}(C_{\Phi_{0}}-C_{\Phi_{1}})\lesssim e^{-\alpha_0\sqrt{N/\log N}}
		$$
		for some $\alpha_0>0$.		$\hfill\square$
		\end{example}

			\addtocontents{toc}{\SkipTocEntry}
		\subsection*{Acknowledgements} The authors would like to thank Herv\'{e} Queff\'{e}lec for several valuable and informative discussions. This study was conducted during the third named author's research year at Laboratoire de Math\'{e}matiques Blaise Pascal of Universit\'{e} Clermont Auvergne and she is deeply grateful for their hospitality.

		\bibliographystyle{abbrv}
		\bibliography{./referencesApproxNos}
		
	\end{document}